\newtheorem{theorem}{Theorem}[section]
\newtheorem{lemma}[theorem]{Lemma}
\newtheorem{corollary}[theorem]{Corollary}
\newtheorem{proposition}[theorem]{Proposition}
\newtheorem{remark}[theorem]{Remark}
\newtheorem{definition}[theorem]{Definition}
\numberwithin{equation}{section}
\DeclareMathOperator*{\argmin}{argmin}
\begin{document}
\title[Viscous Ergodic Problem. Part 2: Mean-Field Games]{A viscous ergodic problem with unbounded and measurable ingredients. Part 2: Mean-Field Games}
\thanks{The author is funded by the Deutsche Forschungsgemeinschaft (DFG, German Research Foundation) – Projektnummer 320021702/GRK2326 – Energy, Entropy, and Dissipative Dynamics (EDDy). These results were presented during the 19th International Symposium on Dynamic Games and Applications held in Porto (Portugal) in July 25-28, 2022. An earlier, yet incomplete, version of this manuscript was part the author's Ph.D. thesis \cite{kouhkouhPhD} which was conducted when he was a Ph.D. student at the University of Padova.}

\author{Hicham Kouhkouh}
\address{Hicham Kouhkouh \newline \indent
{RWTH Aachen University, Institut f\"ur Mathematik,  \newline \indent 
RTG Energy, Entropy, and Dissipative Dynamics,\newline \indent
Templergraben 55 (111810)},
 \newline \indent 
{52062, Aachen, Germany}
}
\email{\texttt{kouhkouh@eddy.rwth-aachen.de}}

\date{\today}
%
\begin{abstract}
We address the problem of existence and (non-)uniqueness of solutions $\big(c,u(\cdot),\mu\big)$ to ergodic mean-field games in the whole space $\mathds{R}^{m}$ with unbounded and merely measurable data, and for non-separable Hamiltonian. The payoff functional satisfies a new monotonicity condition, different from the usual one due to Lasry and Lions. The method we use is also different from classical approaches. It relies on duality theory and optimization in abstract Banach spaces together with maximal dissipativity of diffusion operators, and it follows the companion paper \cite{kouhkouh1}.
\end{abstract}

\subjclass[MSC]{91A16, 35J60, 35F21, 49K27}
\keywords{Duality, ergodic Mean-Field Games, invariant measures, optimization, weak solutions.}
\maketitle


\section{Introduction}

This manuscript is devoted to the problem of existence of solutions to ergodic mean-field games (MFG for short) in the whole space $\mathds{R}^{m}$ with unbounded data satisfying subexponential growth. The MFG system is made of two coupled partial differential equations: the first equation is a Hamilton-Jacobi-Bellman equation (HJB for short), the second one is a nonlinear Fokker-Planck-Kolmogorov equation (FPK for short). And the corresponding ergodic problem is the following:
\begin{equation}
    \label{eq: mfg - intro}
    \begin{aligned}
    &\quad\quad\textit{Find } (c,u, \mu)\in\mathds{R}\times \mathcal{X}(\mathds{R}^{m})\times\mathcal{P}(\mathds{R}^{m}),\, \textit{s.t.:}\\ 
            & H(x,\nabla u(x),D^{2}u(x),\mu) = c\quad  \text{ and }\; - \mathcal{L}^{*}_{\upalpha_{[u,\mu]}}\mu = 0 
    \end{aligned}
\end{equation}
where $\upalpha_{[u,\mu]}(\cdot)$ is a function of $x$ and it depends on $u$ and $\mu$ such that
\begin{equation*}
	\upalpha_{[u,\mu]}(x) \in \argmin\limits_{\alpha \in A}\{\,-\mathcal{L}_{\alpha}u(x) + f(x,\alpha,\mu)\,\}.
\end{equation*}
Here $\mathcal{X}$ is a functional space (part of the unknowns), $\mathcal{P}$ is the set of probability measures, the Hamiltonian is of the form \begin{equation*}
     H(x,\nabla u(x),D^{2}u(x),\mu)\coloneqq \min\limits_{\alpha\in A}\{\,-\mathcal{L}_{\alpha}u(x) + f(x,\alpha,\mu)\,\},
\end{equation*}
the diffusion operator $\mathcal{L}_{\alpha}$ is a linear operator given by
\begin{equation*}
    \mathcal{L}_{\alpha}\varphi(x) \coloneqq \text{trace}\big( a(x,\alpha)D^{2}\varphi(x)\big) + b(x,\alpha)\cdot\nabla\varphi(x) 
\end{equation*}
and its adjoint $\mathcal{L}^{*}_{\alpha}$ is then 
\begin{equation*}
    \mathcal{L}^{*}_{\alpha}\mu(x) = \text{trace}\big(D^{2}(a(x,\alpha)\mu(x))\big) - \text{div}\big(b(x,\alpha)\mu(x)\big).
\end{equation*}
The second equation in \eqref{eq: mfg - intro} is  $-\mathcal{L}^{*}_{\upalpha}\mu = 0$ where $\upalpha \equiv \upalpha_{[u,\mu]}(\cdot)$ is as above.

The (\textit{control}) parameters $\alpha$  take values in a compact set $A$ of $\mathds{R}^{k}$ for some positive $k$.  The case where $H$ is given with a $\max$ (instead of a $\min$) can be obtained analogously (see \cite{kouhkouhPhD} for further details).

The differential operator $\mathcal{L}_{\alpha}$ can be interpreted as the infinitesimal generator of the controlled stochastic process
\begin{equation}\label{eq: SDE intro}
    \text{d}X_{t} = b(X_{t},\alpha_{t})\text{d}t + \sqrt{2}\varrho(X_{t},\alpha_{t})\text{d}B_{t}
\end{equation}
where $B_{t}$ is a Wiener process while $f$ is the payoff integrand in a stochastic control problem. 
Note that \eqref{eq: SDE intro} should be understood in its weak sense (see  \cite{krylov37selection, krylov1969ito, lee2022analytic}).

We denote by $\mathcal{M}(\mathds{R}^{m})$ (respec. $\mathcal{M}^{+}(\mathds{R}^{m})$) the space of totally finite signed (respec. non-negative) Borel measures on $\mathds{R}^{m}$.  
We equip $\mathcal{M}(\mathds{R}^{m})$ with the \textit{Total-Variation} (TV) norm 
$\|\mu\|_{TV} = |\mu|(\mathds{R}^{m})$ where $|\mu|=\mu^{+}+\mu^{-}$ and $\mu^{+},\mu^{-}$ are the positive and negative parts of $\mu$.  
For $d\geq 1$,  $\mathcal{M}_{d}(\mathds{R}^{m})$ is the subset of measures with finite $d$-moment, i.e. for any $\mu\in \mathcal{M}_{d}(\mathds{R}^{m})$, one has $\int_{\mathds{R}^{m}}|x|^{d}\,\text{d}|\mu|(x)<+\infty$, and by $\mathcal{M}_{d}^{+}(\mathds{R}^{m})$ the subspace of non-negative measures. We denote by $\mathcal{P}(\mathds{R}^{m})$ the subset of probability measures and define $\mathcal{P}_{d}(\mathds{R}^{m})=\mathcal{P}(\mathds{R}^{m})\cap \mathcal{M}_{d}(\mathds{R}^{m})$. We write  for any  $g:\mathds{R}^{m}\to \mathds{R}$ measurable and  $\mu\in\mathcal{M}(\mathds{R}^{m})$
\begin{equation*}
    \langle g(\cdot)\, , \mu\rangle = \int_{\mathds{R}^{m}}g(x)\text{d}\mu(x).
\end{equation*}
We also recall the usual notations: if a measure $\mu$ has a density $\rho$ with respect to Lebesgue measure that we denote by $dx$, then $\mu$ is absolutely continuous with respect to $dx$, we write $\mu \ll dx$ and $\rho=\frac{d\mu}{dx}$ is the Radon-Nikodym derivative of $\mu$ with respect to $dx$. With slight abuse of notation, an element $\mu\in\mathcal{M}(\mathds{R}^{m})$ will denote either a measure or a density (when exists). Let $W^{p,k}(\mathds{R}^{m})$, $p\geq 1, k\geq 0$ be the standard Sobolev space of functions whose generalized derivatives up to order $k$ are in $L^{p}(\mathds{R}^{m})$. When we consider a measure $\mu$ instead of Lebesgue, we write $W^{p,k}(\mathds{R}^{m};\mu)$ or $L^{p}(\mathds{R}^{m};\mu)$ to denote the weighted Sobolev or Lebesgue space respectively. And let $W^{p,k}_{loc}(\mathds{R}^{m})$ be the class of functions such that $\chi f\in W^{p,k}(\mathds{R}^{m})$ for each $\chi \in C^{\infty}_{0}(\mathds{R}^{m})$ the class of infinitely differentiable functions with compact support in $\mathds{R}^{m}$.

Our \textbf{main result} (see Theorem \ref{thm: nec and suf - min}) can be informally stated as:\\ 
\textit{
Under assumptions that we shall soon make precise, the following are equivalent:
\begin{enumerate}[label = (\Roman*)]
    \item There exists a pair $(q_{\circ},\upalpha_{\circ})$ such that
    \begin{equation*}
        (q_{\circ},\upalpha_{\circ}) \in \argmin\limits_{\substack{q\in\mathcal{M}_{d}^{+}(\mathds{R}^{m})\\ \upalpha(\cdot)\in\mathcal{A}}}\big\{\langle f(\cdot\,,\upalpha(\cdot),q),\,q\,\rangle\, ,\;\text{s.t.: } 1-\langle 1,q\rangle = 0 \text{ and } q\in \text{Ker}(\mathcal{L}^{*}_{\upalpha})\big\}
    \end{equation*}
where $\mathcal{A}$ is the set of measurable functions from $\mathds{R}^{m}$ to $A$.
    \item{
        There exist $(c_{\circ},u_{\circ},q_{\circ})\in\mathds{R}\times W^{r,2}_{\text{loc}}(\mathds{R}^{m})\times W^{s,1}_{\text{loc}}(\mathds{R}^{m})$ for any $r\geq 1$, $s>m$ and a measurable function $\upalpha_{\circ}(\cdot):\mathds{R}^{m}\to A$, 
        that solve the MFG system
        \begin{equation*}
        \left\{
            \begin{aligned}
            & \quad \min\limits_{\alpha\in A}\{ -\text{trace}\big( a(x,\alpha)D^{2} u_{\circ}(x)\big) - b(x,\alpha)\cdot\nabla u_{\circ}(x) +  f(x,\alpha,q_{\circ}) \} = c_{\circ}  \\
            & - \text{trace}\big(D^{2}(a(x,\upalpha_{\circ}(x))q_{\circ}(x))\big) + \text{div}\big(b(x,\upalpha_{\circ})q_{\circ}(x)\big) = 0, \quad \quad  \text{a.e. in } \mathds{R}^{m}
            \end{aligned}
        \right.
        \end{equation*}
         such that the constant $c_{\circ}$ is defined by $c_{\circ}=\langle f(\cdot\,,\upalpha_{\circ}(\cdot),q_{\circ}),q_{\circ} \rangle$ and the function $\upalpha_{\circ}$ satisfies $\upalpha_{\circ}(x)\in \argmin\limits_{\alpha\in A}\{-\mathcal{L}_{\alpha}u_{\circ}(x) + f(x,\alpha,q_{\circ})\}$ a.e. in $\mathds{R}^{m}$.
    }
\end{enumerate}
}
In particular, the MFG system admits a solution in the sense of (II) above, that is in general not unique (see Corollary \ref{cor: main 2} and Remark \ref{rem: uniq}). 

Throughout the manuscript, we will need three sets of assumptions: \textit{measurability} assumptions (A), \textit{continuity} assumptions (B), \textit{differentiability} assumptions (C), and will refer to them wherever it is needed

\subsection{Assumptions: first batch}\label{sec:first batch}

We now state assumptions (A) and (C), and we keep assumptions (B) for soon after in \S\ref{sec:second batch}. 

\textit{Assumptions (A):} The 
(measurable) dependence on $x$.

\begin{description}
    \item[A1]{
    \begin{enumerate}[label = (\roman*)]
        \item $a=(a^{ij}_{\alpha})$ is a continuous mapping (uniformly in $\alpha$) on $\mathds{R}^m$ such that $a(x,\alpha)=\varrho(x,\alpha)\varrho(x,\alpha)^{\top}$ where $\varrho$ is a continuous in $x$ (unif. in $\alpha$) $m\times m_{1}$ matrix function (for some $m_{1}\geq m$),
        \item $b=(b^{i}_{\alpha}):\mathds{R}^m\times A\to \mathds{R}^m$ is a loc. bounded Borel-meas. vector field.
    \end{enumerate}}
    \item[A2]{For $p>m$, $a^{ij}(\cdot,\alpha)\in W^{p,1}_{\text{loc}}(\mathds{R}^{m})$ and $b^{i}(\cdot,\alpha)\in L^{p}_{\text{loc}}(\mathds{R}^{m})$, uniformly in $\alpha\in A$.   }
    \item[A3]{There exist $\overline{\Lambda}\,\geq\,\underline{\Lambda}\;>0$ such that $\forall\;x,\xi\in\mathds{R}^m$, $\;\underline{\Lambda} \|\xi\|^{2}\,\leq\, \xi\cdot a(x,\alpha)\xi \,\leq\, \overline{\Lambda}\|\xi\|^{2}$, \\ 
    uniformly in $\alpha\in A$, i.e. $(a^{ij})$ is positive, unif. bounded and nondegenerate.}
    \item[A4]{ The drift $b$ satisfies, for some positive numbers $\chi, \gamma_{1},\gamma_{2}$,
    \begin{equation*}
        \sup\limits_{\alpha\in A} \, b(x,\alpha)\cdot x  \leq \gamma_{1} - \gamma_{2}|x|^{\chi},\quad \forall\, x\in\mathds{R}^{m}
    \end{equation*}}
    \item[A5]{$f:\mathds{R}^{m}\times A\times \mathcal{M}(\mathds{R}^{m})\to \mathds{R}$ is such that
    \begin{enumerate}[label = (\roman*)] 
        \item $x\mapsto f(x,\alpha,\mu)$ is Borel-measurable on $\mathds{R}^{m}$,
        \item $f(\cdot\,,\alpha,\mu)\in L^{1}(\mathds{R}^{m};\mu)$, uniformly in $\alpha$ and for every $\mu\in\mathcal{M}^{+}_{d}(\mathds{R}^{m})$,  $d\geq 1$, such that $\mu\ll dx$,
    \end{enumerate}
    }
    \item[A6]{$\exists \, K_{b}>0$ and $\theta\in [0,d]$ such that $|b(x,\alpha)|\leq K_{b}(1+|x|)^{\theta}$ for all $x\in\mathds{R}^{m},\alpha\in A$.}
\end{description}

In assumption (A5-(ii)), what we are asking for is a polynomial growth of $f$ in $x$ of order at most $d$, since $\mu$ here is taken among $\mathcal{M}_{d}(\mathds{R}^{m})$. 
This is in fact a subexponential growth since $d\geq 1$ can be arbitrarily chosen. One can still handle an exponential growth provided  (A4) is strengthened (see \cite[Remark 2.12]{kouhkouh1}).

\textit{Assumptions (C): } The (differentiable) dependence on $\mu$.

\begin{description}
    \item[C1] $\mu\mapsto f(x,\alpha,\mu)$ has a \textit{Fréchet} (or strong) directional derivative at every $\mu\in \mathcal{M}^{+}_{d}(\mathds{R}^{m})$ such that $\mu\ll dx$, i.e.
        \begin{equation*}
            f(x,\alpha,\mu+h) = f(x,\alpha,\mu) + D_{\mu}f(x,\alpha,\mu)[h] + o(\|h\|),\quad \forall\,h\in\mathcal{M}_{d}(\mathds{R}^{m})
        \end{equation*}
        where $D_{\mu}f(x,\alpha,\mu)[h]$ is a bounded linear continuous functional of $h$ and $\|h\|$ is its TV-norm, and satisfies moreover
        \begin{equation*}
            \sup\limits_{\alpha\in A} \|D_{\mu}f(\cdot\,, \alpha,\mu)\|_{_{\text{op}}} \in L^{1}(\mathds{R}^{m};\mu),\quad \forall \, \mu \in \mathcal{M}_{d}^{+}(\mathds{R}^{m})
        \end{equation*}
        where $\|\cdot\|_{_{\text{op}}}$ is the operator norm. 
    \item[C2] 
    The Fréchet directional derivative of $f$ in $\mu\in\mathcal{M}^{+}_{d}(\mathds{R}^{m})$ such that $\mu\ll dx$ satisfies on the subset $\mathcal{M}_{d}^{+}(\mathds{R}^{m})$, uniformly in $\alpha$,
    \begin{equation*}
        \langle\, D_{\mu}f(\cdot\,,\alpha,\mu)[h]\,,\,\mu\, \rangle \leq 0, \quad \forall\,h\in\mathcal{M}^{+}_{d}(\mathds{R}^{m}).
    \end{equation*}
\end{description}



\textbf{Notation. } We shall keep the same notation $f(x,\alpha,\mu)$ whether $f$ depends on $\mu$ in a \textit{local} way, i.e. when we have $f(x,\alpha,\mu(x))$ defined on $\mathds{R}^{m}\times A \times \mathds{R}$, or $f$ depends on $\mu$ in a \textit{non-local} way, i.e. when we have $f(x,\alpha,\mu)$ defined on $\mathds{R}^{m}\times A\times \mathcal{M}(\mathds{R}^{m})$, having in mind that one can represent (in the \textit{local} case) $\mu(x)$ as a convolution with a Dirac measure with unit mass concentrated at zero, i.e. $\delta_{0}\ast \mu(x)$.





With assumption (C1), $\mu\mapsto f(x,\alpha,\mu)$ is \textit{Fréchet}-differentiable, hence there also exists a \textit{G\^{a}teaux} (directional) derivative, and we have
\begin{equation*}
    \lim\limits_{t\downarrow 0} t^{-1}(f(x,\alpha,\mu+th) - f(x,\alpha,\mu)) = D_{\mu}f(x,\alpha,\mu)[h],\quad \forall\, h \in\mathcal{M}_{d}(\mathds{R}^{m}).
\end{equation*}
In particular, $\mu\mapsto f(x,\alpha,\mu)$ is continuous and locally Lipschitz at every $\mu$ in the TV-norm.

Note that assumption (C2) is 
different from the monotonicity assumption discovered by Lasry and Lions \cite{lasry2007mean}, usually present in the MFG literature \cite{cardaliaguet2012long, cardaliaguet2013long} and which is (with our notations) 
\begin{equation}
\label{monotinicity LL}
\tag{M}
    \int_{\mathds{R}^{m}} \big(f(x,\alpha,\mu_{1}) - f(x,\alpha, \mu_{2})\big)\,\text{d}(\mu_{1}-\mu_{2})(x) \leq 0,\quad \forall \,\mu_{1},\mu_{2}\in\mathcal{M}_{d}(\mathds{R}^{m}).
\end{equation}
Setting $\mu_{1}=\mu + h$ and $\mu_{2}=\mu$, and assuming $f$ is Fréchet differentiable in the $\mu$-variable, then one gets
\begin{equation*}
    \int_{\mathds{R}^{m}} \big(f(x,\alpha,\mu+h) - f(x,\alpha, \mu)\big)\,\text{d}h(x) = \langle\,D_{\mu}f(\cdot\,,\alpha,\mu)[h]\,,\,h\,\rangle + o(\|h\|^{2})
\end{equation*}
and condition \eqref{monotinicity LL} hence implies
\begin{equation}
    \label{monotonicity LL 2}
    \tag{M'}
    \langle\,D_{\mu}f(\cdot\,,\alpha,\mu)[h]\,,\,h\,\rangle \leq 0,\quad \forall\,\mu,h\in\mathcal{M}_{d}(\mathds{R}^{m}).
\end{equation}
We shall discuss this later in Remark \Ref{rmk: assumption LL} in \S \ref{sec: rmk and ex}.



\subsection{Related results}

The theory of mean-field games started with the seminal papers by Lasry and Lions \cite{lasry2006jeux,lasry2006jeux2,lasry2007mean} and by Huang, Caines, Malhamé \cite{huang2006large}. Since then there is a huge literature on MFGs. For ergodic MFGs, we would like to refer to \cite{cesaroni2019introduction} and the many references therein. However many of the existing results consider bounded domains (mainly the torus), and very few treat the problem in the whole space. For the periodic case, we refer to \cite{bardi2016nonlinear, dragoni2018ergodic, feleqi2013derivation} that use PDE techniques for elliptic and subelliptic problems. The linear-quadratic setting is studied in \cite{bardi2014linear} where the solvability of the MFG system is reduced to the solvability of an algebraic Riccati equation and a Sylvester equation which also allow to get (at least in some examples) explicit solutions. In \cite{cesaroni2018concentration}, existence of classical solutions is proved in the whole space $\mathds{R}^{m}$ for ergodic MFGs of the form
\begin{equation*}
\left\{\; 
\begin{aligned}
& -\varepsilon \Delta u + H(Du) + c = g(m) + V(x)\\
& -\varepsilon \Delta q -\text{div}(qDH(Du)) = 0, \text{ in } \mathds{R}^{m}
\end{aligned}
\right.
\end{equation*}
where the potential $V$ is assumed to be coercive and $g$ is a local coupling term. The Hamiltonian $H$ also satisfies some growth assumptions. Their approach is variational based on the analysis of the non-convex energy associated to the system.  Another work in this same vein is the one in \cite{cirant2016stationary} where the coupling term is local, decreasing and unbounded satisfying some growth conditions. In this case, existence and non-existence results are shown using Sobolev regularity of the invariant measure and a blow-up procedure, and additional results in the case where the coupling term is local and increasing are also proven. A recent paper is \cite{bernardini2023ergodic} (see also \cite{bernardini2022mass}) which studies ergodic mean-field games in the whole space $\mathds{R}^{m}$ with a coercive potential $V$ and an attractive nonlocal coupling $g$ of Choquard-type. 
In the latter references, the setting is (with our notations) $a=I$ identity matrix, $b(x,\alpha)=\alpha$, $f(x,\alpha,q)=H^{*}(\alpha) - V(x) -  g(q)$ where $H^*$ is the Legendre transform of $H$ which is usually assumed to behave as a power $H(p) = \frac{1}{\gamma}|p|^{\gamma}$ (and hence also $H^{*}$). They are also concerned with \textit{classical solutions} whereas in the present manuscript we shall be interested in \textit{weak solutions}. 

Another paper with a setting that is closer to ours is \cite{arapostathis2017solutions}. Their setting is the one of (ergodic) stochastic control: the drift $b=b(x,\alpha)$ and the diffusion term $\varrho=\varrho(x)$ in \eqref{eq: SDE intro} are locally Lipschitz with an affine growth and local non-degeneracy, and the running cost $f$ satisfies some growth conditions. They proved existence of a solution to the MFG system and also studied the long time behavior. Their approach is based on the ergodic control formulation and relies on regularity of set-values maps corresponding to ergodic occupation measures  together with an application of Kakutani-Fan-Glicksberg fixed point theorem and convex analytic tools. 



Our method seems to be new in this regard. It relies on optimization on abstract Banach spaces, taking advantage of existing results in the theory of Dirichlet forms and diffusion operators. We shall also work with the Total-Variation norm, and not the Wasserstein metric as it is customary; see Remark \ref{rmk: TV}. Moreover, our results rely on a monotonocity assumption that is different from the one discovered by Lasry and Lions. Finally, let us mention that our assumptions concern the coefficients of the diffusion operator (or the underlying stochastic differential equation) rather than the  Hamiltonian.

The manuscript is organized as follows. In \textbf{Section \ref{sec: survey}} we provide the main known results from duality theory and also from diffusion operators, in particular we define the closed extension of an operator and which is the definition we shall consider for $\mathcal{L}_{\alpha}$ in the equation \eqref{eq: mfg - intro}. Then in \textbf{Section \ref{sec: preliminary results}} we prove preliminary results that will be needed throughout the manuscript. We also define the \textit{primal} and \textit{dual} optimization problems. We will then be ready in \textbf{Section \ref{sec: main results}} to state and prove the main result for ergodic MFGs, before we conclude with remarks about our assumptions and some examples.




\section{Survey of known results}\label{sec: survey} 

\subsection{On duality in optimization}\label{sec: duality}

We borrow from \cite{bonnans2013perturbation} some results on duality and optimization that are instrumental in our approach. 

Let $(X,X^{*})$ and $(Y,Y^{*})$ be paired spaces, i.e. such that each space of a pair is a locally convex topological vector space and is the topological dual of the other. We assume moreover that $X$ and $Y$ are Banach spaces that we endow with their respective strong topologies, while $X^{*}$ and $Y^{*}$ are endowed with the respective weak-$*$ topologies.

Let $Q$ be a closed convex subset of $X$ and $K$ a closed convex cone subset of $Y$. We are interested in first order optimality conditions for the optimization problem
\begin{equation}
\label{Primal - 1}
\tag{$P$}
    val(P) =\; \min\limits_{x\in Q} f(x),\quad \text{s.t.:}\;\;\; G(x)\in K
\end{equation}
where $f:X\to \mathds{R}$ and $G:X\to Y$. The objective function in \eqref{Primal - 1} can be reformulated as $f(x) + I_{Q}(x)$ while we minimize over the whole set $X$. We denote by $I_{Q}(\cdot)$ the indicator function ($I_{Q}(x) = 0$ if $x\in Q$, and $+\infty$ if $x\notin Q$). The Lagrangian of \eqref{Primal - 1} is 
\begin{equation}
\label{lagrangian}
    L(x,y^{*}) := f(x) + \langle y^{*}, G(x) \rangle,\quad (x,y^{*}) \in X\times Y^{*}.
\end{equation}

We embed the problem \eqref{Primal - 1} into the family of optimization problems
\begin{equation}
\label{Primal - y}
\tag{$P_{y}$}
    \min\limits_{x\in Q} f(x),\quad \text{s.t.:}\;\;\; G(x)+y\in K
\end{equation}
where $y\in Y$ is viewed as the parameter vector. Clearly for $y=0$, the corresponding problem $(P_{0})$ coincides with the problem \eqref{Primal - 1}. Let $v(y)$ be the corresponding value function
\begin{equation*}
    v(y) = val(P_{y}) = \inf\limits_{x\in Q}\; f(x) + I_{K}(G(x)+y). 
\end{equation*}

The (conjugate) dual of \eqref{Primal - 1} can be written in the form (see \cite[\S 2.5.3, p. 107]{bonnans2013perturbation}):
\begin{equation}
    \label{Dual}
    \tag{$D$}
    val(D) =\; \max\limits_{y^{*}\in Y^{*}} \big\{\inf\limits_{x\in Q} \;L(x,y^{*})\; - I^{*}_{K}(y^{*})\;  \big\}
\end{equation}
and $I^{*}_{K}(\cdot)$ is the Legendre-Fenchel conjugate of the indicator function supported on $K$, which is known as the \textit{support function} of the set $K$. 

Recall that $val(P)\geq val(D)$ (this can be easily obtained for example as a consequence of conjugate duality; see \cite[eq. (2.268), p.  96]{bonnans2013perturbation}, or by Lagrange duality; see \cite[Proposition 2.156, p.  104]{bonnans2013perturbation}) and that if for some $x_{o}\in Q$, $y_{o}^{*}\in Y^{*}$ the equality of primal and dual objective functions holds, i.e.
\begin{equation}
    \label{equality objective functions}
    f(x_{o}) + I_{K}(G(x_{o})) = \inf\limits_{x\in Q}\;L(x,y^{*}_{o}) - I^{*}_{K}(y_{o}^{*}),
\end{equation}
then $val(P)=val(D)$, and if the common value is finite, then $x_{o}\in Q$ and $y_{o}^{*}\in Y^{*}$ are optimal solutions of \eqref{Primal - 1} and \eqref{Dual} respectively. The equality \eqref{equality objective functions} can be written in the following equivalent form
\begin{equation}
    \label{equality objective functions - FY}
    \big(L(x_{o},y_{o}^{*})-\inf\limits_{x\in Q}L(x,y_{o}^{*})\big) + \big(I_{K}(G(x_{o})) + I^{*}_{K}(y_{o}^{*})-\langle y_{o}^{*},G(x_{o})\rangle\big) = 0.
\end{equation}
Clearly, the first term in the left hand side is non-negative and the second term is also non-negative by the Young-Fenchel inequality. Moreover the equality
\begin{equation*}
    I_{K}(G(x_{o})) + I^{*}_{K}(y_{o}^{*})-\langle y_{o}^{*},G(x_{o})\rangle = 0
\end{equation*}
holds if and only if $y_{o}^{*}\in \partial I_{K}(G(x_{o}))$; the subdifferential of the indicator function evaluated in $G(x_{o})$. 
Thus, the equality in \eqref{equality objective functions} is equivalent to
\begin{equation}
\label{optimality conditions - 1}
    x_{o}\in \argmin\limits_{x\in Q}L(x,y_{o}^{*})\quad \text{and}\quad y_{o}^{*}\in\partial I_{K}(G(x_{o})).
\end{equation}
And we have $\partial I_{K}(G(x_{o}))=N_{K}(G(x_{o}))$ where $N_{K}(\cdot)$ is the normal cone\footnote{If $S\subset X$ convex, then $N_{S}(x):=\{x^{*}\in X^{*}\,: \langle x^{*},z-x\rangle \leq 0\,\forall\;z\in S\}$. If $x\notin S$ then $N_{S}(x)=\emptyset$.} to $K$. Moreover, since $K$ is a convex cone, the condition $y_{o}^{*}\in N_{K}(G(x_{o}))$ is equivalent to
\begin{equation}
    \label{equiv cond normal cone}
    G(x_{o})\in K,\quad y_{o}^{*}\in K^{-}\quad \text{and}\quad \langle y_{o}^{*},G(x_{o}) \rangle = 0
\end{equation}
where $K^{-}$ is the polar (negative dual) cone\footnote{Let $C$ be a subset of $X$, then $C^{-}:=\{x^{*}\in X^{*}\;:\; \langle x^{*},x \rangle \leq 0,\quad\forall\;x\in C \}$.} of $K$. The optimality conditions can therefore be written as
\begin{equation}
\label{optimality conditions - 2}
	x_{o}\in \argmin\limits_{x\in Q}L(x,y_{o}^{*}),\quad G(x_{o})\in K,\quad y_{o}^{*}\in K^{-}\quad \text{and}\quad \langle y_{o}^{*},G(x_{o})=0.
\end{equation}


We are interested in existence of dual variables and in \textit{no duality gap} between \eqref{Primal - 1} and \eqref{Dual}, i.e. $val\eqref{Primal - 1}=val\eqref{Dual}$. We consider the convex case that we now define before stating the existence theorem.

\begin{definition}(\cite[Definition 2.163, p. 110]{bonnans2013perturbation})\label{def: convex}
We say that the problem \eqref{Primal - 1} is convex if the function $f(x)$ is convex, the set $Q$ is convex, the set $K$ is convex and closed, and the mapping $G(x)$ is convex with respect to the set\footnote{The mapping $G$ is convex w.r.t. the set $C$ if the multifunction $G(x)+C$ is convex (see \cite[Definition 2.103, p.72]{bonnans2013perturbation}), that is, for any $x_{1},x_{2}\in X$ and $t\in [0,1]$,
\begin{equation*}
    tG(x_{1}) + (1-t)G(x_{2}) - G(tx_{1} + (1-t)x_{2}) + C \subset C.
\end{equation*}
} $C:=-K$.
\end{definition}

\begin{theorem}\label{thm: equivalence}
    If \eqref{Primal - 1} is convex, $f,G$ are continuous, $Q$ is nonempty and closed,  val\eqref{Primal - 1} is finite and the following condition 
    \begin{equation}
    \label{eq: interior}
        0\in \text{\upshape int}\{G(Q)-K\}
    \end{equation}
    is satisfied, then there is no duality gap, and a feasible point $x_{\circ}$ is optimal if and only if there exists $\Bar{y}^{*}\in Y^{*}$ satisfying the conditions \eqref{optimality conditions - 2}. \\
    Moreover, if $x_{\circ}$ is an optimal solution of \eqref{Primal - 1}, then the set of all $\Bar{y}^{*}$ satisfying optimality conditions \eqref{optimality conditions - 2} is nonempty, closed and convex and coincides with the set of optimal solution of the dual problem \eqref{Dual}, and hence is the same for any optimal solution of \eqref{Primal - 1}.
\end{theorem}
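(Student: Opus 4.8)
The plan is to run the classical perturbation (conjugate duality) argument built on the value function $v(y)=val\eqref{Primal - y}$ of the embedded family, and then to read off the optimality characterization from the Fenchel--Young analysis already recorded in \eqref{equality objective functions - FY}--\eqref{optimality conditions - 2}. First I would establish two structural facts. Convexity of the data in the sense of Definition \ref{def: convex} makes $v$ a convex function on $Y$: given feasible points for $(P_{y_{1}})$ and $(P_{y_{2}})$, convexity of $f$, of $Q$, and of $G$ with respect to $-K$ produces a feasible point for the averaged parameter, whence $v(ty_{1}+(1-t)y_{2})\le tv(y_{1})+(1-t)v(y_{2})$. Then a direct conjugacy computation gives
\[
  v^{*}(y^{*}) = I^{*}_{K}(y^{*}) - \inf_{x\in Q} L(x,y^{*}),
\]
so that, comparing with \eqref{Dual}, one has $val\eqref{Dual}=\sup_{y^{*}}\{-v^{*}(y^{*})\}=v^{**}(0)$ while $val\eqref{Primal - 1}=v(0)$; weak duality $val\eqref{Primal - 1}\ge val\eqref{Dual}$ is then exactly the biconjugate inequality $v(0)\ge v^{**}(0)$.

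Next comes the heart of the matter. I would identify $\mathrm{dom}(v)=K-G(Q)$, so that the qualification \eqref{eq: interior}, namely $0\in\mathrm{int}\{G(Q)-K\}$, is precisely $0\in\mathrm{int}(\mathrm{dom}\,v)$ (negation preserves interiors and fixes $0$). Since $v$ is convex, $val\eqref{Primal - 1}=v(0)$ is finite, and $0$ is interior to $\mathrm{dom}\,v$, the function $v$ is continuous at $0$; in particular it is lower semicontinuous there, so $v(0)=v^{**}(0)$ — which is exactly the absence of a duality gap — and it is subdifferentiable at $0$, i.e. $\partial v(0)\neq\emptyset$. For any $\bar y^{*}\in\partial v(0)$ the Young equality $v(0)+v^{*}(\bar y^{*})=\langle\bar y^{*},0\rangle=0$ reads $-v^{*}(\bar y^{*})=v(0)=val\eqref{Dual}$, so $\bar y^{*}$ solves \eqref{Dual}; conversely no-gap forces every dual optimizer into $\partial v(0)$. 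Hence the dual solution set equals $\partial v(0)$, which as a subdifferential is convex and weak-$*$ closed, is nonempty by the above, and depends only on $v$ and the point $0$, so it is the same regardless of which primal minimizer we fix.

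Finally I would splice this with the material already present in the excerpt. For an optimal $x_{\circ}$ and any $\bar y^{*}\in\partial v(0)$, equality of the primal and dual objective values \eqref{equality objective functions} holds, and this was shown — through \eqref{equality objective functions - FY}, \eqref{optimality conditions - 1} and \eqref{equiv cond normal cone} — to be equivalent to the optimality system \eqref{optimality conditions - 2}; conversely, any $\bar y^{*}$ satisfying \eqref{optimality conditions - 2} renders $x_{\circ}$ and $\bar y^{*}$ jointly optimal. This yields the asserted equivalence ``$x_{\circ}$ optimal $\iff$ some $\bar y^{*}$ satisfies \eqref{optimality conditions - 2}'', and identifies the set of such multipliers with $\partial v(0)$ and with the set of dual optima, delivering the nonemptiness, closedness, convexity and independence from $x_{\circ}$ claimed in the statement.

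The step I expect to be the genuine obstacle is the continuity (equivalently, subdifferentiability) of $v$ at $0$ in the infinite-dimensional Banach setting: finiteness of $v(0)$ together with $0\in\mathrm{int}(\mathrm{dom}\,v)$ does not by itself make a convex function continuous unless one also secures local boundedness from above near $0$. This is exactly what \eqref{eq: interior} is engineered to guarantee, via a Robinson--Ursescu / open-mapping type argument for the closed convex feasibility multifunction $y\mapsto\{x\in Q:\,G(x)+y\in K\}$ — the hypotheses that $f,G$ are continuous and $Q,K$ are closed entering precisely here. At this point I would either invoke the corresponding result of \cite{bonnans2013perturbation} or reproduce its separation argument applied to the epigraph of $v$ in $Y\times\mathds{R}$.
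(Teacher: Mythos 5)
Your argument is correct, but it is not the route the paper takes: the paper's proof of Theorem \ref{thm: equivalence} is a two-line citation of \cite[Theorem 3.4(i),(iii)]{bonnans2013perturbation}, whose working hypothesis is \emph{calmness} of \eqref{Primal - 1}, together with the (also cited) observation that calmness follows from \eqref{eq: interior} in the convex case. You instead reconstruct the perturbation-duality argument in full: convexity of the value function $v(y)=val\eqref{Primal - y}$, the identity $val\eqref{Dual}=v^{**}(0)$, the identification $\mathrm{dom}\,v=K-G(Q)$, continuity and hence subdifferentiability of $v$ at $0$, and the identification of the set of multipliers in \eqref{optimality conditions - 2} with $\partial v(0)$ and with the dual solution set via \eqref{equality objective functions}--\eqref{optimality conditions - 2}. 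The two routes meet at the same pivot --- for a convex $v$ finite at $0$, calmness at $0$ is exactly nonemptiness of $\partial v(0)$ --- but they reach it differently, and yours buys slightly more: continuity of $v$ at $0$ yields weak-$*$ compactness, not merely nonemptiness, closedness and convexity, of the dual solution set, which is the content of Remark \ref{rmk: duality}. You also correctly isolated the one step that genuinely needs the Banach-space hypotheses: local upper boundedness of $v$ near $0$ does \emph{not} follow from $0\in\mathrm{int}(\mathrm{dom}\,v)$ and finiteness alone, and your appeal to a Robinson--Ursescu open-mapping argument for the closed convex multifunction $x\mapsto G(x)-K$ restricted to $Q$ (this is where continuity of $f,G$ and closedness of $Q,K$ enter) is the right patch; had you asserted continuity of $v$ at $0$ without that caveat, the proof would have had a real gap.
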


\begin{proof}[\textbf{Proof.}]
    The theorem corresponds to the statements \textit{(i)} and \textit{(iii)} of \cite[Theorem 3.4, p.148]{bonnans2013perturbation} where it is assumed that \eqref{Primal - 1} is \textit{calm}. The latter condition is implied by \eqref{eq: interior}  when \eqref{Primal - 1} is convex, $f,G$ are continuous, $Q$ is nonempty and closed and val\eqref{Primal - 1} is finite; see \cite[p.149]{bonnans2013perturbation}.
\end{proof}

\begin{remark}\label{rmk: duality}
    In fact, in the situation of Theorem \ref{thm: equivalence} and when an optimal solution $x_{\circ}$ for \eqref{Primal - 1} exists, the optimal solution set of the dual problem \eqref{Dual} is a nonempty, convex, bounded and weak-$*$ compact subset of $Y^{*}$; see \cite[Theorem 3.6, p.149]{bonnans2013perturbation}. 
\end{remark}

The next proposition characterizes \eqref{eq: interior} in a particular case: \\
Let $Y$ be the Cartesian product of two Banach spaces $Y_{1}$ and $Y_{2}$, and $K=K_{1}\times K_{2}\subset Y_{1}\times Y_{2}$ where $K{1}$ and $K_{2}$ are closed convex subsets of $Y_{1}$ and $Y_{2}$ respectively. Let $G(x)=(G_{1}(x), G_{2}(x))$ with $G_{i}(x)\in Y_{i},i=1,2$. 
\begin{proposition}\label{prop: interior}
If $Y_{2}=X$, $G_{2}(x)=x$ for all $x\in X$ and $G(x)$ is $(-K)$-convex and continuously differentiable, then the following condition is equivalent to \eqref{eq: interior}
\begin{equation}
    \label{eq: interior 2}
    0\in \text{\upshape int}\{G_{1}(x_{\circ}) + DG_{1}(x_{\circ})[K_{2} - x_{\circ}]  - K_{1}\}
\end{equation}
at every feasible point $x_{\circ}\in \{x\in X\,:\, x\in Q \text{ and } G(x) \in K\}$.
\end{proposition}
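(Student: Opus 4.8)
The goal is to prove Proposition 3.5 about characterizing the interior condition. Let me think carefully.The plan is to exploit the product structure $Y = Y_1 \times Y_2 = Y_1 \times X$ together with the special form $G_2(x) = x$ and $K_2 \subset X$, reducing the abstract interior condition \eqref{eq: interior} in $Y_1 \times X$ to a condition living purely in $Y_1$. The core idea is that the second coordinate constraint $G_2(x) + y_2 \in K_2$, i.e.\ $x + y_2 \in K_2$, is ``freely satisfiable'': for any target point near the feasible $x_\circ$ we can absorb perturbations into the $Y_2$-component, so the only genuine obstruction to $0$ lying in the interior of $G(Q) - K$ comes from the first component once we restrict $x$ to vary over $K_2$.

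First I would unpack \eqref{eq: interior}: since $K = K_1 \times K_2$ and $G = (G_1, G_2)$, the set $G(Q) - K$ consists of pairs $(G_1(x) - k_1,\; x - k_2)$ with $x \in Q$, $k_1 \in K_1$, $k_2 \in K_2$. Because $Q$ is the full feasible structure and (crucially) $Y_2 = X$ with $G_2 = \mathrm{id}$, I would argue that membership $0 \in \mathrm{int}\{G(Q)-K\}$ forces us to control the first coordinate while the second coordinate can be made to cover a neighborhood of $0$ automatically. Concretely, for the second component $x - k_2$ with $k_2 \in K_2$, sweeping $k_2$ over $K_2$ (and $x$ near $x_\circ$) lets this coordinate range over a neighborhood of $0$ precisely because $x_\circ$ is feasible, so $x_\circ \in K_2$ is available as a base point. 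Thus the interior condition collapses to requiring $0 \in \mathrm{int}$ of the image of the \emph{first} component alone.

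The second step is the linearization. Since $G$ is continuously differentiable and $(-K)$-convex, I would replace the nonlinear image $\{G_1(x) - k_1 : x \in K_2,\, k_1 \in K_1\}$ near the feasible point $x_\circ$ by its first-order model. Using differentiability, $G_1(x) \approx G_1(x_\circ) + DG_1(x_\circ)[x - x_\circ]$, and letting $x$ range over $K_2$ gives the displacement set $DG_1(x_\circ)[K_2 - x_\circ]$; subtracting $K_1$ produces exactly the set appearing in \eqref{eq: interior 2}. The convexity of $G$ with respect to $-K$ is what guarantees that this linearization is not merely a local approximation but genuinely controls the interior: convexity ensures the linearized feasible directions are contained in (and generate the same interior as) the true constraint set, so that $0 \in \mathrm{int}$ of the linearized set is equivalent to, not merely necessary for, the original condition. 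This is the standard mechanism by which Robinson-type constraint qualifications are stated in terms of derivatives, and it is precisely the content of the characterization in \cite[\S 2.3.4]{bonnans2013perturbation}.

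The main obstacle I anticipate is making the reduction of the second coordinate rigorous: one must verify carefully that appending the free $Y_2 = X$ component with $G_2 = \mathrm{id}$ and cone $K_2$ does not impose any additional interior restriction beyond $x \in K_2$, and that this interacts correctly with the topology (strong topology on $X$, and the requirement that $0$ be an \emph{interior} rather than merely a relative-interior point). The delicate direction is showing that \eqref{eq: interior 2} implies \eqref{eq: interior}: one needs the open-mapping-type behavior supplied by $(-K)$-convexity and $C^1$-regularity of $G$ to lift a neighborhood of $0$ in $Y_1$ back to a neighborhood of $0$ in $Y_1 \times X$, which is where I would invoke the convexity hypothesis most heavily and, if needed, an explicit construction choosing $x$ near $x_\circ$ in $K_2$ to realize a prescribed first-coordinate target while simultaneously selecting $k_2 \in K_2$ to hit the second-coordinate target. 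I would finish by checking the reverse implication, which is comparatively routine since restricting \eqref{eq: interior} to the linearized first component immediately yields \eqref{eq: interior 2}.
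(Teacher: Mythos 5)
Your proposal is correct and follows essentially the same route as the paper: you pass from $0\in\text{int}\{G(Q)-K\}$ to the linearized (Robinson) form using the $(-K)$-convexity and continuous differentiability of $G$, and then collapse the second coordinate via the product structure $Y_2=X$, $G_2=\mathrm{id}$ — precisely the two facts from \cite[\S 2.3.4]{bonnans2013perturbation} (Proposition 2.104 and equation (2.192)) that the paper's proof cites directly. The only difference is that you perform the two reductions in the opposite order and sketch the mechanisms rather than citing them, but the underlying argument is identical.
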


\begin{proof}[\textbf{Proof}] 
Using \cite[Proposition 2.104, p.73]{bonnans2013perturbation}), we have \eqref{eq: interior} is equivalent to Robinson's constraint qualification
\begin{equation}
    \label{Robinson 1}
    0 \in \text{\upshape int}\{G(x_{\circ}) + DG(x_{\circ})(Q-x_{\circ})  -K\}
\end{equation}
which in turn is equivalent to \eqref{eq: interior 2}; see \cite[equation (2.192), p.71]{bonnans2013perturbation}).
\end{proof}

\subsection{Optimization in space of measures}\label{sec: opt space measure}

We consider a particular case of the optimization problem \eqref{Primal - 1}  that we write in the context of functionals depending on a measure following the results in \cite{molchanov2000tangent}.

In this subsection, we choose $Q$ and $K$ as closed convex subsets of $\mathcal{M}^{+}(\mathds{R}^{m})$ and a Banach space Y, respectively, and we define $f : \mathcal{M}(\mathds{R}^{m}) \to \mathds{R}$ and $G:\mathcal{M}(\mathds{R}^{m})\to Y$ as Fréchet differentiable functions. The derivative of $f$ is a linear functional $Df(\mu)[h]$ acting on $h\in \mathcal{M}(\mathds{R}^{m})$ and the derivative of $G$ is a linear operator $DG(\mu)[h]$ mapping $\mathcal{M}$ into $Y$. The optimization problem we consider is
\begin{equation}\label{Primal - measure - survey}
    \min f(\mu),\quad \text{ s.t.: }\quad \mu\in Q\; \text{ and }\; G(\mu)\in K
\end{equation}
We need now to define a notion of regularity (also called \textit{Constraint Qualification}) that is due to Robinson \cite{robinson1976first} (see also \cite[\S  2.3.4, p. 67]{bonnans2013perturbation}), analogue to \eqref{Robinson 1}.

\begin{definition}(\cite[Definition 1.1]{molchanov2000tangent}) \label{def: regular measure}
A measure $\mu$ is called regular for Problem \eqref{Primal - measure - survey} if 
\begin{equation}
    \label{eq: regular measure}
    0 \in int(G(\mu) + DG(\mu)[Q-\mu] - K)
\end{equation}
where $int(A)$ is the set of all $y\in A\subset Y$ such that $y+ty_{1}\in A$ for all $y_{1}\in Y$ and all sufficiently small positive $t$.
\end{definition}
The following theorem is \cite[Theorem 4.1]{cominetti1990metric} and gives first order necessary conditions for a minimum in Problem \eqref{Primal - measure - survey}. When applied to the framework of measures, it is stated in \cite{molchanov2000tangent}.
\begin{theorem}(\cite[Theorem 1.1]{molchanov2000tangent})\label{thm: nec cond measure opt}
Assume that both $f : \mathcal{M}(\mathds{R}^{m}) \to \mathds{R}$ and $G:\mathcal{M}(\mathds{R}^{m})\to Y$ are continuous on $Q$ and Fréchet differentiable at a regular $\mu_{\circ}\in Q$ such that $G(\mu_{\circ})\in K$. Then, if $\mu_{\circ}$ is a local minimum point in Problem \eqref{Primal - measure - survey}, the following (necessary) optimality condition is satisfied:
\begin{equation}
    \label{eq: optimality condition measure}
    Df(\mu_{\circ})[h] \geq 0,\quad \forall\, h\in T_{Q\cap G^{-1}(K)}(\mu_{\circ}),
\end{equation}
where $T_{B}(\mu)$ is the first order tangent set to a set $B$ at a point $\mu$ in a Banach space and is defined as
\begin{equation*}
    T_{B}(\mu) = \liminf\limits_{t\downarrow 0}\frac{B-\mu}{t}.
\end{equation*}
\end{theorem}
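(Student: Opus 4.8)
The plan is to obtain the inequality \eqref{eq: optimality condition measure} directly from the definition of a local minimum combined with the Fréchet expansion of $f$, treating the tangent set purely as a device that manufactures feasible perturbations of $\mu_{\circ}$. Write $\mathcal{F} := Q\cap G^{-1}(K)$ for the feasible set, so that by hypothesis $\mu_{\circ}\in\mathcal{F}$ is a local minimizer of $f$ over $\mathcal{F}$. Fix an arbitrary direction $h\in T_{\mathcal{F}}(\mu_{\circ})$. By the $\liminf$-definition of the tangent set, for \emph{every} sequence $t_{n}\downarrow 0$ there exist $h_{n}\to h$ with $\mu_{\circ}+t_{n}h_{n}\in\mathcal{F}$; I would fix one such sequence $t_{n}\downarrow 0$, take the accompanying $h_{n}\to h$, and set $\mu_{n}:=\mu_{\circ}+t_{n}h_{n}$, which is feasible for every $n$.

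Since $h_{n}\to h$ the norms $\|h_{n}\|$ stay bounded, so $\mu_{n}\to\mu_{\circ}$ in $\mathcal{M}(\mathds{R}^{m})$. As $\mu_{\circ}$ is a local minimizer and each $\mu_{n}$ is feasible, for all $n$ large enough one has $f(\mu_{n})\geq f(\mu_{\circ})$. Invoking Fréchet differentiability of $f$ at $\mu_{\circ}$, I would expand
\[
    f(\mu_{n})-f(\mu_{\circ}) = t_{n}\,Df(\mu_{\circ})[h_{n}] + o\big(t_{n}\|h_{n}\|\big),
\]
where $Df(\mu_{\circ})$ is a bounded (hence continuous) linear functional. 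Dividing by $t_{n}>0$ and using that $\|h_{n}\|$ is bounded, so that $o(t_{n}\|h_{n}\|)/t_{n}\to 0$, the minimality inequality becomes $0\leq Df(\mu_{\circ})[h_{n}]+o(1)$.

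It then remains to pass to the limit. Continuity of the bounded linear functional $Df(\mu_{\circ})$ together with $h_{n}\to h$ gives $Df(\mu_{\circ})[h_{n}]\to Df(\mu_{\circ})[h]$, so letting $n\to\infty$ yields $Df(\mu_{\circ})[h]\geq 0$. Since $h\in T_{\mathcal{F}}(\mu_{\circ})$ was arbitrary, this is exactly \eqref{eq: optimality condition measure}.

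The part that requires the most care is not the variational inequality above --- which in fact holds at any local minimizer, regular or not --- but the role of the regularity hypothesis \eqref{eq: regular measure}. Robinson's condition is what guarantees that the tangent set $T_{\mathcal{F}}(\mu_{\circ})$ is nonempty and large enough to make the conclusion meaningful, and in the underlying result of Cominetti it is precisely regularity that identifies $T_{\mathcal{F}}(\mu_{\circ})$ with the computable linearized cone $\{h\in T_{Q}(\mu_{\circ}):DG(\mu_{\circ})[h]\in T_{K}(G(\mu_{\circ}))\}$. I would therefore expect the genuine obstacle to lie in this tangent-set identification, where the metric-regularity machinery enters, rather than in the elementary descent argument that produces the inequality.
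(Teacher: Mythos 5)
Your argument is correct. Note that the paper does not prove this statement at all: it is quoted verbatim from \cite{molchanov2000tangent} (ultimately \cite[Theorem 4.1]{cominetti1990metric}), so there is no in-paper proof to compare against. Your direct derivation is the standard one and is sound: since $T_{B}(\mu)$ is defined as a Painlev\'e--Kuratowski $\liminf$, membership of $h$ does give, for \emph{every} sequence $t_{n}\downarrow 0$, feasible points $\mu_{\circ}+t_{n}h_{n}\in Q\cap G^{-1}(K)$ with $h_{n}\to h$, and then local minimality plus the Fr\'echet expansion and boundedness of $\|h_{n}\|$ yield $Df(\mu_{\circ})[h]\geq 0$ exactly as you write. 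Your closing observation is also accurate and worth keeping: the inequality \eqref{eq: optimality condition measure} holds at any local minimizer without the regularity hypothesis \eqref{eq: regular measure}; regularity is used in \cite{cominetti1990metric} only to identify $T_{Q\cap G^{-1}(K)}(\mu_{\circ})$ with the linearized cone $\{h\in T_{Q}(\mu_{\circ}): DG(\mu_{\circ})[h]\in T_{K}(G(\mu_{\circ}))\}$, which is what makes the condition usable (and is how it is exploited later in the paper, in Step 2.1 of the proof of Theorem \ref{thm: nec and suf - min}, via Theorem \ref{thm: tangent set measure} and Corollary \ref{cor: tangent set measure}).
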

In order to make use of the latter theorem, we will need to determine what is the tangent set in the space of measures. In our case, we shall be interested in $\mathcal{M}^{+}(\mathds{R}^{m})$, the cone of finite non-negative measures.
\begin{theorem}(\cite[Theorem 2.1]{molchanov2000tangent})\label{thm: tangent set measure}
Let $\mu \in \mathcal{M}^{+}(\mathds{R}^{m})$. Then
\begin{equation}
    T_{\mathcal{M}^{+}(\mathds{R}^{m})}(\mu) = \{h \in \mathcal{M}(\mathds{R}^{m})\,:\; h^{-}\ll \mu \},
\end{equation}
where for a signed measure $h$, its Jordan decomposition is written as $h=h^{+}-h^{-}$, and for $p,q \in \mathcal{M}^{+}(\mathds{R}^{m})$, $p\ll q$ refers to absolute continuity of $p$ with respect to $q$.
\end{theorem}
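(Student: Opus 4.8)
The plan is to prove the two inclusions separately, after unwinding the definition of the tangent set as a Kuratowski lower limit. Recall that $h\in\liminf_{t\downarrow 0}(B-\mu)/t$ means $\operatorname{dist}(h,(B-\mu)/t)\to 0$ in the TV-norm as $t\downarrow 0$, equivalently: for every sequence $t_{n}\downarrow 0$ there exist $h_{n}$ with $\mu+t_{n}h_{n}\in B$ and $\|h_{n}-h\|_{TV}\to 0$. Here $B=\mathcal{M}^{+}(\mathds{R}^{m})$, so the admissibility constraint $\mu+t_{n}h_{n}\in B$ reads simply $\mu+t_{n}h_{n}\geq 0$.

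For the inclusion $\supseteq$, I take $h$ with $h^{-}\ll\mu$ and construct an admissible approximating sequence by truncation. By Radon--Nikodym I write $h^{-}=\phi\,\mu$ with $\phi\geq 0$ in $L^{1}(\mathds{R}^{m};\mu)$, and I set $\phi_{k}:=\min(\phi,k)$. Given $t_{n}\downarrow 0$, I define $h_{n}:=h^{+}-\phi_{k(n)}\mu$ with $k(n):=\lfloor 1/t_{n}\rfloor$. Then $\mu+t_{n}h_{n}=(1-t_{n}\phi_{k(n)})\mu+t_{n}h^{+}\geq 0$, because $t_{n}\phi_{k(n)}\leq t_{n}k(n)\leq 1$ and $h^{+}\geq 0$; moreover $\|h_{n}-h\|_{TV}=\int(\phi-\phi_{k(n)})\,\text{d}\mu\to 0$ since $k(n)\to\infty$ and $\phi\in L^{1}(\mathds{R}^{m};\mu)$, by dominated convergence. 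This shows $h\in T_{\mathcal{M}^{+}(\mathds{R}^{m})}(\mu)$.

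For the inclusion $\subseteq$, I argue by contradiction. Suppose $h\in T_{\mathcal{M}^{+}(\mathds{R}^{m})}(\mu)$ but $h^{-}\not\ll\mu$, so there is a Borel set $E$ with $\mu(E)=0$ and $h^{-}(E)>0$. Fixing $t_{n}=1/n$, I obtain $h_{n}\to h$ in TV with $\mu+t_{n}h_{n}\geq 0$. Restricting to any Borel $F\subseteq E$ and using $\mu(E)=0$ gives $t_{n}h_{n}(F)=(\mu+t_{n}h_{n})(F)\geq 0$, so the restriction $h_{n}|_{E}$ is a non-negative measure. Since the restriction map is a TV-contraction and the cone $\mathcal{M}^{+}(\mathds{R}^{m})$ is TV-closed, the limit $h|_{E}$ is non-negative. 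But letting $N$ be the negative set in the Hahn decomposition of $h$, one has $h(E\cap N)=-h^{-}(E\cap N)=-h^{-}(E)<0$, contradicting $h|_{E}\geq 0$. Hence $h^{-}\ll\mu$.

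The measure-theoretic verifications are routine; the one point deserving care is the simultaneous control in the first inclusion, where the truncation level $k(n)$ must grow fast enough that $h_{n}\to h$ yet slowly enough (namely $t_{n}k(n)\leq 1$) that positivity $\mu+t_{n}h_{n}\geq 0$ is preserved. Coupling $k(n)$ to $t_{n}$ via $k(n)=\lfloor 1/t_{n}\rfloor$ resolves this tension. The main conceptual obstacle is correctly handling the lower-limit definition of the tangent set, so that the quantifier ``for every $t_{n}\downarrow 0$'' is respected in the $\supseteq$ direction while a single well-chosen sequence suffices for the contradiction in the $\subseteq$ direction.
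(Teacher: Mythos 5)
Your proof is correct. Note that the paper does not prove this statement at all: it is imported verbatim as a citation of \cite[Theorem 2.1]{molchanov2000tangent}, so there is no in-paper argument to compare against. Your two inclusions follow essentially the standard route of that source: for $\supseteq$, the Radon--Nikodym representation $h^{-}=\phi\,\mu$ with the truncation $\phi_{k(n)}=\min(\phi,k(n))$ coupled to $t_{n}$ so that $t_{n}\phi_{k(n)}\leq 1$ gives admissibility, with $L^{1}(\mu)$-convergence of the truncations handling the TV-approximation; for $\subseteq$, restricting to a $\mu$-null set $E$ with $h^{-}(E)>0$ and using that non-negativity passes to TV-limits yields the contradiction via the Hahn decomposition. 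The unwinding of the Kuratowski lower limit (universal quantifier over sequences $t_{n}\downarrow 0$ in one direction, a single sequence in the other) is handled correctly, and the positivity and convergence verifications all check out.
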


A direct consequence of the latter theorem, is the case of measures with finite $d$-moment. It suffices indeed to replace $\mathcal{M}(\mathds{R}^{m})$ (respec. $\mathcal{M}^{+}(\mathds{R}^{m})$) with $\mathcal{M}_{d}(\mathds{R}^{m})$ (respec. $\mathcal{M}^{+}_{d}(\mathds{R}^{m})$ and obtain the following result.
\begin{corollary}\label{cor: tangent set measure}
Let $\mu \in \mathcal{M}^{+}_{d}(\mathds{R}^{m})$. Then
\begin{equation}
    T_{\mathcal{M}^{+}_{d}(\mathds{R}^{m})}(\mu) \supseteq \{h \in \mathcal{M}_{d}(\mathds{R}^{m})\,:\; h^{-}\ll \mu \}.
\end{equation}
\end{corollary}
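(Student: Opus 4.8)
The plan is to reduce to Theorem \ref{thm: tangent set measure} (Molchanov--Zuyev) by re-running its construction while keeping track of the $d$-moment, exploiting that the approximation I use only shrinks the negative part, so that finiteness of the $d$-moment comes for free. Since the tangent set is the Kuratowski inner limit $T_{\mathcal{M}^{+}_{d}(\mathds{R}^{m})}(\mu)=\liminf_{t\downarrow 0} t^{-1}(\mathcal{M}^{+}_{d}(\mathds{R}^{m})-\mu)$, proving the asserted inclusion amounts to showing: given any $h\in\mathcal{M}_{d}(\mathds{R}^{m})$ with $h^{-}\ll\mu$ and any sequence $t_{n}\downarrow 0$, there exist $g_{n}\to h$ in total variation with $\mu+t_{n}g_{n}\in\mathcal{M}^{+}_{d}(\mathds{R}^{m})$.

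First I would fix the Jordan decomposition $h=h^{+}-h^{-}$; since $h\in\mathcal{M}_{d}$, both $h^{\pm}\in\mathcal{M}^{+}_{d}$. From $h^{-}\ll\mu$ and Radon--Nikodym there is $\phi:=\tfrac{dh^{-}}{d\mu}\in L^{1}(\mathds{R}^{m};\mu)$ with $\phi\geq 0$. The obstruction to using the naive straight path $t\mapsto \mu+th$ is that $\phi$ may be unbounded, so $\mu-th^{-}=(1-t\phi)\mu$ need not be non-negative for any $t>0$. I would remove this by truncation: set $\phi_{N}:=\min(\phi,N)$, $h^{-}_{N}:=\phi_{N}\,\mu$, and $h_{N}:=h^{+}-h^{-}_{N}$.

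Then for every $t\in(0,1/N]$ one has $\mu+th_{N}=(1-t\phi_{N})\mu+th^{+}\geq 0$, because $t\phi_{N}\leq tN\leq 1$ and $h^{+}\geq 0$; moreover $|h_{N}|\leq h^{+}+h^{-}_{N}\leq |h|$ gives $h_{N}\in\mathcal{M}_{d}$, whence $\mu+th_{N}\in\mathcal{M}^{+}_{d}(\mathds{R}^{m})$. On the other hand $h-h_{N}=(\phi_{N}-\phi)\mu$, so
\begin{equation*}
    \|h-h_{N}\|_{TV}=\int_{\mathds{R}^{m}}(\phi-\phi_{N})\,d\mu\;\xrightarrow[N\to\infty]{}\;0
\end{equation*}
by monotone convergence, since $\phi\in L^{1}(\mathds{R}^{m};\mu)$. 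To conclude, given $t_{n}\downarrow 0$ I would pick $N_{n}:=\lfloor 1/t_{n}\rfloor\to\infty$, so that $t_{n}\leq 1/N_{n}$, and set $g_{n}:=h_{N_{n}}$; then $\mu+t_{n}g_{n}\in\mathcal{M}^{+}_{d}(\mathds{R}^{m})$ and $g_{n}\to h$ in TV, which is exactly the condition $h\in T_{\mathcal{M}^{+}_{d}(\mathds{R}^{m})}(\mu)$.

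The only genuine subtlety, and the reason the statement is a one-sided inclusion rather than the equality of Theorem \ref{thm: tangent set measure}, is the unboundedness of the density $\phi$, handled by the truncation above; the $d$-moment causes no extra trouble precisely because truncating $\phi$ only diminishes $h^{-}$ and hence cannot create moment blow-up. I would finally remark that the reverse inclusion is neither immediate nor needed: for the necessary optimality condition \eqref{eq: optimality condition measure} it is the superset that matters, since it guarantees that every admissible direction $h$ with $h^{-}\ll\mu$ is available as a test direction.
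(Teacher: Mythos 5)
Your proof is correct, and it does in full what the paper only gestures at: the paper's entire argument for this corollary is the one-line remark that it ``suffices to replace $\mathcal{M}(\mathds{R}^{m})$ with $\mathcal{M}_{d}(\mathds{R}^{m})$'' in Theorem \ref{thm: tangent set measure}, whereas you actually re-run the Molchanov--Zuyev truncation construction and verify that the moment condition survives. The two key points you supply, and which the paper leaves implicit, are exactly the right ones: first, the truncated measure $h_{N}=h^{+}-\phi_{N}\mu$ satisfies $|h_{N}|=h^{+}+h^{-}_{N}\leq |h|$ (here it is worth noting, as you implicitly do, that $h^{+}\perp h^{-}_{N}$ because $h^{-}_{N}\leq h^{-}$, so this really is the Jordan decomposition of $h_{N}$), which is what guarantees $h_{N}\in\mathcal{M}_{d}$ at no extra cost; second, the choice $N_{n}=\lfloor 1/t_{n}\rfloor$ makes the feasibility $\mu+t_{n}h_{N_{n}}\geq 0$ and the TV-convergence $h_{N_{n}}\to h$ hold simultaneously, which is precisely the Kuratowski lower-limit criterion for membership in $T_{\mathcal{M}^{+}_{d}(\mathds{R}^{m})}(\mu)$. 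Your closing observation that only the inclusion $\supseteq$ is claimed (and needed for the optimality condition \eqref{eq: optimality condition measure}) is also accurate: the reverse inclusion is not automatic, since TV-limits of measures in $\mathcal{M}_{d}$ need not lie in $\mathcal{M}_{d}$.
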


\subsection{Extension of diffusion operators}\label{sec: diffusion} 
We resume in this subsection some known results from 
\cite{bogachev2002uniqueness} (see also \cite{bogachev2001regularity, bogachev2015fokker, bogachev2000generalization,  bogachev1999uniqueness, stannat1999nonsymmetric}). We shall be interested in a matrix-valued function $a=(a^{ij}_{\alpha})$ and a vector field $b=(b^{i}_{\alpha})$ such that $a^{ij}_{\alpha}(x) = a^{ij}(x,\alpha)$ and $b^{i}_{\alpha}(x) = b^{i}(x,\alpha)$ where $\alpha$ is some parameter in the compact set $A$. For the sake of simplicity of notations, we omit the dependence of $a$ and $b$ on the parameter $\alpha$, the latter being assumed fixed in the present subsection (its effect will be tackled in \S \ref{sec:second batch}). 
Hence we simply write $a=(a^{ij})$ a continuous mapping on $\mathds{R}^m$ and  $b=(b^i):\mathds{R}^m\to \mathds{R}^m$ a Borel-measurable vector field. Let us also set
\begin{equation}
\label{gen_diffusion op}
    L_{a,b}\varphi = a^{ij}\partial_{i}\partial_{j}\varphi + b^{i}\partial_{i}\varphi,\quad \varphi \in C^{\infty}_{0}(\mathds{R}^m),
\end{equation}
where we have used the standard summation rule for repeated indices. Assume $\mu$ is a locally finite (not necessarily non-negative) Borel measure on $\mathds{R}^m$, i.e. a measure on the Borel $\sigma$-algebra $\mathcal{B}(\mathds{R}^m)$ of $\mathds{R}^m$, solving the Fokker-Planck-Kolmogorov (FPK) equation
\begin{equation}
    \label{equation mu_diff op}
    L^{*}_{a,b}\mu = 0
\end{equation}
in the following sense:
\begin{equation}
\label{sense equation mu_diff op}
    a^{ij},b^{i}\in L^{1}_{\text{loc}}(\mathds{R}^{m};\mu) \quad \text{ and } \quad \int_{\mathds{R}^m}L_{a,b}\varphi\; d\mu = 0,\quad \forall\;\varphi\in C^{\infty}_{0}(\mathds{R}^m)
\end{equation}
Measures $\mu$ satisfying \eqref{equation mu_diff op} are called \textit{infinitesimally} invariant, or simply invariant if there is no confusion. And define
\begin{equation}
\label{measures in kernel}
    \mathcal{M}_{\text{ell}}^{a,b}:=\big\{\mu\;|\; \mu \text{ a probability measure on }\; \mathds{R}^{m}\;\text{satisfying }\; \eqref{equation mu_diff op}\big\}.
\end{equation}
where the subscript ``ell" stands for \textit{elliptic}. In \cite{bogachev1999uniqueness}, it is shown that the question whether or not $\mathcal{M}_{\text{ell}}^{a,b}$ contains at most one element turns out to be related to the question whether $\mu\in\mathcal{M}_{\text{ell}}^{a,b}$ is invariant for the $C_{0}$-semigroup generated by the closure of the operator $(L_{a,b},C^{\infty}_{0}(\mathds{R}^m))$. \\
In particular, and under assumptions that we will shortly make precise, if $\mathcal{M}_{\text{ell}}^{a,b} = \{\mu\}$ a singleton, then $\mu$ allows to define a new operator $(\overline{L}^{\mu}_{a,b}, D(\overline{L}^{\mu}_{a,b}))$ which is the closed extension of $(L_{a,b},C^{\infty}_{0}(\mathds{R}^m))$ on $L^{1}(\mathds{R}^{m};\mu)$.
The latter operator will play a key role in our main result on existence of solutions to \eqref{eq: mfg - intro}. 
The next theorem summarizes known results: their proofs and references can be found in \cite[\S 2.2]{kouhkouh1}. 

\begin{theorem}\label{thm summary diff}
The following statements hold.
\begin{enumerate}[label=\textbf{S.\arabic*},ref=S.\arabic*]
\item \label{thm existence inv meas} [Existence] Assume (A1), (A2), (A3) and (A4). Then $\mathcal{M}_{\text{ell}}^{a,b}$ as defined in \eqref{measures in kernel} is non-empty.
\item \label{thm regularity meas} [Regularity] Let $\mu$ be a locally finite and non-negative Borel measure satisfying \eqref{equation mu_diff op}. Assume (A1), (A2) and (A3). Then $\mu \ll dx$ with $\frac{d\mu}{dx}\in W^{p,1}_{\text{loc}}(\mathds{R}^{m})\big(\subset C^{1-\frac{m}{p}}(\mathds{R}^{m})\big)$. If $\rho$ denotes the continuous version of $\frac{d\mu}{dx}$, then for all compact $K\subset \mathds{R}^{m}$, $\exists\;c_{K}\in]0,\infty[$ s.t.: $\sup\limits_{K}\rho \leq c_{K}\inf\limits_{K}\rho$. In particular, either $\rho\equiv 0$ or $\rho(x)>0,\;\forall\;x\in\mathds{R}^{m}$.
\item \label{thm closed extension} [Extension] Assume (A1), (A2), (A3) and (A4). Then $\mathcal{M}^{a,b}_{\text{ell}}=\{\mu\}$ is a singleton and the following statements hold true
\begin{enumerate}[label=(\roman*)]
    \item there exists a closed extension of the operator $(L_{a,b},C^{\infty}_{0}(\mathds{R}^{m}))$ on $L^{1}(\mathds{R}^{m};\mu)$;
    \item its closure $(\overline{L}^{\mu}_{a,b}, D(\overline{L}^{\mu}_{a,b}))$ on $L^{1}(\mathds{R}^{m};\mu)$ generates a  $C_{0}$-semigroup $(T^{\mu}_{t})_{t\geq 0}$ on $L^{1}(\mathds{R}^{m};\mu)$;
    \item $(T^{\mu}_{t})_{t\geq 0}$ is the only $C_{0}$-semigroup on $L^{1}(\mathds{R}^{m};\mu)$ which has a generator extending $(L_{a,b},C^{\infty}_{0}(\mathds{R}^{m}))$;
    \item $(T^{\mu}_{t})_{t\geq 0}$ is contractive, and $\mu$ is $(T^{\mu}_{t})_{t\geq 0}$-invariant in the sense
    \begin{equation}\label{eq: semigroup invariance}
        \int_{\mathds{R}^{m}}T^{\mu}_{t}fd\mu = \int_{\mathds{R}^{m}}fd\mu,\quad \forall\;f\in L^{\infty}(\mathds{R}^{m};\mu).
    \end{equation}
\end{enumerate}
\end{enumerate}
\end{theorem}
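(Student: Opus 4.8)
Since Theorem~\ref{thm summary diff} collects facts established in the literature, the proof I would give is one of assembly: for each of the three statements, I would identify the precise theorem in the Bogachev--Krylov--Röckner circle of ideas that applies, and verify that (A1)--(A4) meet the standing hypotheses there. Below I sketch the mathematical content behind each quotation.

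For Statement~\ref{thm existence inv meas} (existence) the plan is a Lyapunov (drift) argument. Taking $V(x)=|x|^{2}$, a direct computation gives $L_{a,b}V(x) = 2\,\text{trace}\,a(x,\alpha) + 2\,b(x,\alpha)\cdot x$; assumption (A3) bounds the first term by $2m\overline{\Lambda}$, while (A4) bounds the second by $2(\gamma_{1}-\gamma_{2}|x|^{\chi})$, so that $L_{a,b}V(x)\le 2m\overline{\Lambda}+2\gamma_{1}-2\gamma_{2}|x|^{\chi}\to-\infty$ as $|x|\to\infty$. Thus $V$ has compact sublevel sets and is a genuine Lyapunov function, and the standard existence theorem for stationary FPK equations (see \cite{bogachev2002uniqueness, bogachev2015fokker}) produces a probability measure solving \eqref{equation mu_diff op} in the sense of \eqref{sense equation mu_diff op}, the required nondegeneracy and local integrability being exactly (A2)--(A3).

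For Statement~\ref{thm regularity meas} (regularity) I would invoke the interior regularity theory for FPK equations with Sobolev coefficients. Under (A1)--(A3), with $a^{ij}(\cdot,\alpha)\in W^{p,1}_{\text{loc}}$ and $b^{i}(\cdot,\alpha)\in L^{p}_{\text{loc}}$ for $p>m$, the results of \cite{bogachev2001regularity} (see also \cite{bogachev2015fokker}) give that every non-negative solution $\mu$ of \eqref{equation mu_diff op} is absolutely continuous with density $\rho=\frac{d\mu}{dx}\in W^{p,1}_{\text{loc}}(\mathds{R}^{m})$; the Sobolev embedding $W^{p,1}_{\text{loc}}\hookrightarrow C^{1-m/p}$, valid since $p>m$, furnishes the continuous version. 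The local two-sided bound $\sup_{K}\rho\le c_{K}\inf_{K}\rho$, and hence the dichotomy $\rho\equiv 0$ or $\rho>0$ everywhere, is the Harnack inequality for such densities, again available in \cite{bogachev2001regularity}.

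Statement~\ref{thm closed extension} (extension) is where I expect the real work to sit. Uniqueness $\mathcal{M}_{\text{ell}}^{a,b}=\{\mu\}$ is \emph{not} a soft consequence of existence: it must be deduced from $L^{1}(\mathds{R}^{m};\mu)$-uniqueness of $(L_{a,b},C^{\infty}_{0})$, i.e. from the fact that its closure generates a \emph{unique} contractive $C_{0}$-semigroup. The mechanism, due to \cite{bogachev1999uniqueness} (in the nonsymmetric form of \cite{stannat1999nonsymmetric}), is that the Lyapunov bound from (A4) forces maximal dissipativity of the closed operator, which simultaneously selects a single invariant measure and yields the semigroup $(T^{\mu}_{t})$ of items (i)--(iii); contractivity and the invariance identity \eqref{eq: semigroup invariance} then follow from sub-Markovianity together with the stationarity $L^{*}_{a,b}\mu=0$ tested against $T^{\mu}_{t}f$. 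The main obstacle is precisely verifying the dissipativity/uniqueness criterion of \cite{bogachev1999uniqueness, bogachev2002uniqueness} for the \emph{merely measurable, unbounded} drift: one must check that (A4) supplies a Lyapunov function compatible with their criterion and that (A2)--(A3) allow the passage from formal computations on $C^{\infty}_{0}$ to the $L^{1}(\mu)$-closure. As all of this is carried out in \cite[\S 2.2]{kouhkouh1}, the proof here ultimately reduces to quoting those statements after confirming that (A1)--(A4) coincide with their standing hypotheses.
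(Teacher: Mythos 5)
Your proposal is correct and follows essentially the same route as the paper: the paper offers no proof of its own for this theorem, simply deferring to \cite[\S 2.2]{kouhkouh1} for the precise references in the Bogachev--Krylov--R\"ockner--Stannat literature, which is exactly the assembly you describe. Your additional sketches (the Lyapunov computation with $V(x)=|x|^{2}$ using (A3)--(A4), the Sobolev/Harnack regularity under (A1)--(A3), and the maximal-dissipativity mechanism behind uniqueness and the semigroup) are accurate and, if anything, more informative than what the paper records.
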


Let us consider now the situation of Theorem \ref{thm summary diff}. 
Fix $\mu\in\mathcal{M}_{\text{ell}}^{a,b}$. As observed in \cite[\S 2.3]{bogachev1999uniqueness}, $\mu$ is equivalent to Lebesgue measure, and therefore is strictly positive on all non-empty open subsets of $\mathds{R}^{m}$. So $C^{\infty}_{0}(\mathds{R}^{m})$ can be identified with a subset of $L^{1}(\mathds{R}^{m};\mu)$, since each corresponding $\mu$-class has a unique continuous $\mu$-version. Hence the operator $(L_{a,b},C^{\infty}_{0}(\mathds{R}^{m}))$ is well defined on $L^{1}(\mathds{R}^{m};\mu)$. 

Thanks to this result, we can now define on a larger space the operator $\mathcal{L}$ in the problem \eqref{eq: mfg - intro}. This is an important step when dealing with unbounded right-hand side terms $f$ in \eqref{eq: mfg - intro}, since there cannot exist any solution in $C^{\infty}_{0}(\mathds{R}^{m})$.\\
Indeed, the differential operator $(\mathcal{L}, D(\mathcal{L}))$ in \eqref{eq: mfg - intro} should be understood in the sense of the closed extension $(\overline{L}^{\mu}_{a,b},D(\overline{L}^{\mu}_{a,b}))$ provided by (\ref{thm closed extension}) in Theorem \ref{thm summary diff}, where $D(\overline{L}^{\mu}_{a,b})$ is the closure of $C^{\infty}_{0}(\mathds{R}^m)$ in $L^{1}(\mathds{R}^{m};\mu)$. More precisely, we have $C^{\infty}_{0}(\mathds{R}^m)\subset D(\overline{L}^{\mu}_{a,b})\subset L^{1}(\mathds{R}^{m};\mu)$ with dense inclusions.

In the following, we state from \cite{bogachev2002uniqueness} a theorem which makes $D(\overline{L}^{\mu}_{a,b})$ more precise. In fact, for every $r\in [1,+\infty)$, the restriction of the semigroup $\{T^{\mu}_{t}\}_{t\geq 0}$, whose generator is $\overline{L}^{\mu}_{a,b}$, to $L^{r}(\mathds{R}^{m};\mu)$ is a strongly continuous semigroup on $L^{r}(\mathds{R}^{m};\mu)$ (see \cite[Lemma 5.1.4, p. 180]{bogachev2015fokker}). Its generator will be denoted by $(L^{\mu,r}_{a,b},D(L^{\mu,r}_{a,b}))$, where
\begin{equation*}
    D(L^{\mu,r}_{a,b}) = \{f\in D(L^{\mu}_{a,b})\cap L^{r}(\mathds{R}^{m};\mu): \; L^{\mu}_{a,b}f\in L^{r}(\mathds{R}^{m};\mu)\}.
\end{equation*}

\begin{theorem}(\cite[Theorem 2.8(i)]{bogachev2002uniqueness})\label{thm sobolev domain extension}
In the situation of (\ref{thm closed extension}) in Theorem \ref{thm summary diff}, one has for any $r\in [1,+\infty)$
\begin{equation}\label{eq: domain extension}
\begin{aligned}
    & (L^{\mu,r}_{a,b},D(L^{\mu,r}_{a,b})) \subset \{f\in L^{r}(\mathds{R}^{m};\mu)\cap W^{r,2}_{\text{loc}}(\mathds{R}^m)\,:\, L_{a,b}f\in L^{r}(\mathds{R}^{m};\mu)\}\\
    & \quad \text{and }\; L^{\mu,r}_{a,b}f=L_{a,b}f\quad \text{for all}\; f\in D(L^{\mu,r}_{a,b}).
\end{aligned}
\end{equation}
\end{theorem}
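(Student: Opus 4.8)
The plan is to reduce the abstract identification of the generator $L^{\mu,r}_{a,b}$ with the pointwise differential operator $L_{a,b}$ to a statement of interior elliptic regularity. First I would fix $f\in D(L^{\mu,r}_{a,b})$ and set $g:=L^{\mu,r}_{a,b}f\in L^{r}(\mathds{R}^{m};\mu)$. Since $f$ lies in the domain of the closure $\overline{L}^{\mu}_{a,b}$ of $(L_{a,b},C^{\infty}_{0}(\mathds{R}^{m}))$ in $L^{1}(\mathds{R}^{m};\mu)$, there is a sequence $(f_{n})\subset C^{\infty}_{0}(\mathds{R}^{m})$ with $f_{n}\to f$ and $L_{a,b}f_{n}\to g$ in $L^{1}(\mathds{R}^{m};\mu)$. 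By Theorem \ref{thm summary diff} (\ref{thm regularity meas}) one has $\mu=\rho\,dx$ with $\rho$ continuous and strictly positive, hence locally bounded above and below; therefore on every compact set the norms of $L^{r}(\mathds{R}^{m};\mu)$ and $L^{r}(dx)$ are equivalent, the $L^{1}(\mathds{R}^{m};\mu)$-convergence above localizes to $L^{1}_{\text{loc}}(dx)$, and $f,g\in L^{r}_{\text{loc}}(dx)$.

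The next step is to turn this abstract convergence into a genuine (weak) partial differential equation for $f$. Here one must be careful because the principal coefficients $a^{ij}$ carry only one weak derivative (they lie in $W^{p,1}_{\text{loc}}$ by (A1)–(A2)), so a naive double integration by parts onto a test function is not available. Instead I would pass to the limit in the approximating identity using the divergence-plus-lower-order form $a^{ij}\partial_{i}\partial_{j}f=\partial_{i}(a^{ij}\partial_{j}f)-(\partial_{i}a^{ij})\partial_{j}f$, which is meaningful because $\partial_{i}a^{ij}\in L^{p}_{\text{loc}}$ and $b^{i}\in L^{p}_{\text{loc}}$. The outcome is that $f$ is a weak solution of $a^{ij}\partial_{i}\partial_{j}f+b^{i}\partial_{i}f=g$ on $\mathds{R}^{m}$, once an initial first-order regularity $f\in W^{r,1}_{\text{loc}}$ has been secured so that the weak formulation can be written down; that baseline regularity is itself extracted from the approximating sequence together with uniform ellipticity (A3).

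With the weak equation in hand I would invoke interior $L^{r}$-regularity for uniformly elliptic operators with continuous leading coefficients: by (A3) the operator is uniformly elliptic, by (A1) the $a^{ij}$ are continuous (hence of vanishing mean oscillation locally), and by (A2) the drift satisfies $b^{i}\in L^{p}_{\text{loc}}$ with $p>m$. Freezing the principal part and moving $b^{i}\partial_{i}f$ to the right-hand side, the Calderón–Zygmund / Agmon–Douglis–Nirenberg theory for VMO coefficients yields interior $L^{r}$-bounds on the second derivatives, so that $f\in W^{r,2}_{\text{loc}}(\mathds{R}^{m})$, after a short bootstrap in which the subcriticality afforded by $p>m$ lets the drift term be absorbed into the source through a Sobolev-embedding argument. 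Once $f\in W^{r,2}_{\text{loc}}(\mathds{R}^{m})$, the pointwise expression $L_{a,b}f=a^{ij}\partial_{i}\partial_{j}f+b^{i}\partial_{i}f$ is defined a.e.\ and coincides with the distributional $g$; this gives simultaneously the membership $f\in L^{r}(\mathds{R}^{m};\mu)\cap W^{r,2}_{\text{loc}}(\mathds{R}^{m})$ with $L_{a,b}f\in L^{r}(\mathds{R}^{m};\mu)$ and the identity $L^{\mu,r}_{a,b}f=L_{a,b}f$, completing the inclusion in \eqref{eq: domain extension}.

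The main obstacle I anticipate is the elliptic regularity step itself: the leading coefficients are merely continuous rather than Hölder or smooth, so classical Schauder theory is unavailable and one must rely on the sharp $L^{r}$ theory for operators with VMO coefficients, while simultaneously controlling the low-regularity drift $b\in L^{p}_{\text{loc}}$. The delicate point is verifying that the drift term can be absorbed into the right-hand side without degrading the second-order estimate — which hinges precisely on the embedding made available by $p>m$ — and the honest passage from the abstract $L^{1}(\mathds{R}^{m};\mu)$-closure to a bona fide weak formulation of the equation, adapted to coefficients with only one weak derivative, is the technical prerequisite that makes this machinery applicable.
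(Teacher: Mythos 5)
First, a remark on the comparison itself: the paper does not prove this statement --- it is imported verbatim as \cite[Theorem 2.8(i)]{bogachev2002uniqueness} --- so there is no internal proof to measure yours against; what follows assesses your argument on its own terms.

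Your overall strategy (localize using $\mu=\rho\,dx$ with $\rho$ continuous and strictly positive from (\ref{thm regularity meas}), convert the abstract generator identity into a weak elliptic equation, then invoke interior $L^{r}$ regularity for uniformly elliptic operators with VMO leading coefficients and drift in $L^{p}_{\text{loc}}$, $p>m$) has the right shape and is in the spirit of \cite{bogachev2002uniqueness}. But there is a genuine gap exactly at the step you defer as a ``technical prerequisite'': the passage from the $L^{1}(\mathds{R}^{m};\mu)$-graph closure to a bona fide weak formulation. Your approximating sequence $(f_{n})\subset C^{\infty}_{0}(\mathds{R}^{m})$ is graph-Cauchy only in $L^{1}(\mu)$, so you control $f_{n}$ and $L_{a,b}f_{n}$ in $L^{1}_{\text{loc}}$ and nothing more. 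To write the identity involving $\int a^{ij}\partial_{j}f\,\partial_{i}\varphi$ you need $\nabla f\in L^{1}_{\text{loc}}$; to extract that from the sequence you would need an interior estimate bounding $\|\nabla f_{n}\|$ on $B_{1/2}$ by $\|f_{n}\|_{L^{1}(B_{1})}+\|L_{a,b}f_{n}\|_{L^{1}(B_{1})}$, and Calder\'on--Zygmund/ADN theory provides no such estimate with $L^{1}$ data (it fails at the endpoint exponent $1$). The alternative of integrating by parts twice onto the test function is also blocked, since $a^{ij}$ carries only one weak derivative and $b^{i}$ is merely $L^{p}_{\text{loc}}$, so the formal Lebesgue-adjoint $L^{*}_{a,b}\varphi$ is not a function that $f_{n}\in L^{1}_{\text{loc}}$ can be paired with. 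Asserting that the baseline regularity ``is extracted from the approximating sequence together with uniform ellipticity (A3)'' names the difficulty without resolving it; this is precisely the nontrivial content of the theorem. Relatedly, the CZ/VMO machinery you invoke is a $1<r<\infty$ theory, while the statement includes $r=1$ (two derivatives in $L^{1}_{\text{loc}}$), which that machinery cannot deliver. A workable repair for $r>1$ would be to first establish that $C^{\infty}_{0}(\mathds{R}^{m})$ is a core for $(L^{\mu,r}_{a,b},D(L^{\mu,r}_{a,b}))$ in the $L^{r}(\mu)$ graph norm (e.g.\ via the resolvent), and then apply the interior $W^{r,2}$ a priori estimate to the differences $f_{n}-f_{k}$, making $(f_{n})$ Cauchy in $W^{r,2}_{\text{loc}}$ directly and bypassing the weak formulation altogether; but the core property is itself a statement of comparable depth, and the case $r=1$ still requires the separate arguments developed in \cite{bogachev2002uniqueness}.
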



The next result is from \cite{veretennikov1997polynomial} and concerns the moments of the invariant measure. 
\begin{lemma}\label{conv-prob law}
Assuming (A1), (A3) and (A4), the invariant probability measure $\mu$ exists and has finite moments of any order $\ell\geq1$, i.e. $\int_{\mathds{R}^{m}}|x|^{\ell}\,\text{d}\mu(x) <+\infty$. 
\end{lemma}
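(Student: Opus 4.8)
The plan is to first secure existence and then establish the moment bounds by a Lyapunov (drift) argument built on the one-sided dissipativity condition (A4). Existence of an invariant probability measure $\mu\in\mathcal{M}_{\text{ell}}^{a,b}$, i.e. a probability solution of \eqref{equation mu_diff op} in the weak sense \eqref{sense equation mu_diff op}, is already granted by \ref{thm existence inv meas} in Theorem \ref{thm summary diff} (or directly by \cite{veretennikov1997polynomial}), so the substance of the lemma is the integrability $\int_{\mathds{R}^m}|x|^\ell\,d\mu<\infty$ for every $\ell\geq 1$.

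For the moment bounds I would test the stationary equation against (truncations of) the smooth Lyapunov functions $V_p(x)=(1+|x|^2)^{p/2}$, $p\geq 2$. A direct computation gives $\nabla V_p=p(1+|x|^2)^{(p-2)/2}x$ and the Hessian $p(1+|x|^2)^{(p-2)/2}I + p(p-2)(1+|x|^2)^{(p-4)/2}\,x\otimes x$. Using the two-sided bound (A3) on $a$ to control $\mathrm{trace}(aD^2V_p)\leq p\,\overline{\Lambda}(m+p-2)(1+|x|^2)^{(p-2)/2}$, and the one-sided bound (A4) to control the drift term $b\cdot\nabla V_p=p(1+|x|^2)^{(p-2)/2}(b\cdot x)\leq p(1+|x|^2)^{(p-2)/2}(\gamma_1-\gamma_2|x|^\chi)$, one obtains a pointwise estimate of the shape
\begin{equation*}
    L_{a,b}V_p(x)\ \leq\ C_p\,(1+|x|^2)^{(p-2)/2}\ -\ \kappa_p\,(1+|x|^2)^{(p-2+\chi)/2}
\end{equation*}
for suitable constants $C_p,\kappa_p>0$. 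The decisive feature is that the dissipated term has exponent larger by $\chi>0$ than the positive term.

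To turn this pointwise inequality into an integral bound I would multiply $V_p$ by radial cut-offs $\zeta_n(x)=\eta(|x|/n)$, with $\eta$ smooth, nonincreasing, equal to $1$ on $[0,1]$ and $0$ outside $[0,2]$, so that $\varphi_n=\zeta_n V_p\in C^\infty_0(\mathds{R}^m)$ is an admissible test function in \eqref{sense equation mu_diff op}, giving $\int L_{a,b}(\zeta_n V_p)\,d\mu=0$. Expanding $L_{a,b}(\zeta_n V_p)=\zeta_n L_{a,b}V_p+V_pL_{a,b}\zeta_n+2a^{ij}\partial_i\zeta_n\partial_jV_p$ and rearranging, the main term produces $\kappa_p\int\zeta_n(1+|x|^2)^{(p-2+\chi)/2}\,d\mu$, which I would bound by the positive term $C_p\int\zeta_n(1+|x|^2)^{(p-2)/2}\,d\mu$ plus the two error terms. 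Since the cut-off derivatives are supported in the annulus $\{n\leq|x|\leq 2n\}$ and carry factors $n^{-1},n^{-2}$, the error coming from $a$ is controlled by (A3), while the potentially dangerous drift error $V_p\,b\cdot\nabla\zeta_n$ is handled by its sign: on the annulus $b\cdot x\leq\gamma_1-\gamma_2 n^\chi<0$ whereas $\eta'\leq 0$, so this contribution is favourably signed and may be discarded after the rearrangement. Letting $n\to\infty$ and invoking monotone convergence then yields $\kappa_p\int(1+|x|^2)^{(p-2+\chi)/2}\,d\mu\leq C_p\int(1+|x|^2)^{(p-2)/2}\,d\mu+\text{const}$.

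This is an inductive gain of $\chi$ in the moment order: the left-hand side controls the moment of order $p-2+\chi$ in terms of the moment of order $p-2$. The base case $p=2$ carries no positive growth term (the right-hand side is bounded by a multiple of $\mu(\mathds{R}^m)=1$), so $\int|x|^\chi\,d\mu<\infty$; feeding finiteness of the order-$(p-2)$ moment into the estimate and iterating in steps of $\chi$ then gives finiteness of every polynomial moment, whence $\int_{\mathds{R}^m}|x|^\ell\,d\mu<\infty$ for all $\ell\geq 1$ (lower-order moments being controlled by higher ones through Jensen's inequality, since $\mu$ is a probability measure). The main obstacle is precisely the truncation step: justifying that the cut-off error terms vanish in the limit \emph{without} any pointwise upper bound on $|b|$, which is why one must exploit the one-sided structure of (A4) through a radial, nonincreasing cut-off rather than a crude absolute-value estimate on the drift.
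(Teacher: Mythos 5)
The paper itself does not prove this lemma directly: it delegates to \cite[Lemma 1]{kouhkouh1}, which in turn is a special case of \cite[Theorem 6, eq. (28)]{veretennikov1997polynomial}. Your attempt at a self-contained Lyapunov argument is therefore a genuinely different route, and the overall architecture (test $V_p=(1+|x|^2)^{p/2}$ against the stationary equation, gain $\chi$ in the moment order, induct) is the right one. However, the step you yourself flag as the main obstacle --- the truncation --- contains a sign error that breaks the proof. With $\zeta_n(x)=\eta(|x|/n)$, $\eta$ nonincreasing, one has $b\cdot\nabla\zeta_n=\eta'(|x|/n)\,(b\cdot x)/(n|x|)$; on the annulus $\eta'\le 0$ and, by (A4), $b\cdot x\le\gamma_1-\gamma_2 n^{\chi}<0$ for large $n$, so the product is \emph{nonnegative}, hence $V_p\,b\cdot\nabla\zeta_n\ge 0$. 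Now the identity $0=\int\zeta_nL_{a,b}V_p\,d\mu+\int V_pL_{a,b}\zeta_n\,d\mu+2\int a^{ij}\partial_i\zeta_n\partial_jV_p\,d\mu$ combined with $L_{a,b}V_p\le g_p-h_p$ rearranges to
\begin{equation*}
\kappa_p\!\int\zeta_n(1+|x|^2)^{\frac{p-2+\chi}{2}}d\mu\;\le\;C_p\!\int\zeta_n(1+|x|^2)^{\frac{p-2}{2}}d\mu+\int V_p\,a\!:\!D^2\zeta_n\,d\mu+\int V_p\,b\cdot\nabla\zeta_n\,d\mu+2\int a\nabla\zeta_n\cdot\nabla V_p\,d\mu,
\end{equation*}
so the drift error sits on the \emph{majorant} side with a plus sign. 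A nonnegative term there cannot be ``discarded''; to drop it you would need it to be nonpositive. Nor can it be estimated: (A4) bounds $b\cdot x$ only from above, so $|b\cdot x|$ (and hence $b\cdot\nabla\zeta_n$) admits no growth control whatsoever under (A1)--(A4), and $V_p\sim n^p$ on the annulus. This is precisely the term your argument must control and cannot.

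The standard repair is to truncate by composition rather than by multiplication: take $\varphi_n=\Phi_n\circ V_p$ with $\Phi_n$ smooth, nondecreasing, concave, $\Phi_n(t)=t$ for $t\le n$ and constant for $t\ge 2n$ (subtract the constant so that $\varphi_n-\Phi_n(2n)\in C^\infty_0(\mathds{R}^m)$ is admissible in \eqref{sense equation mu_diff op}). Then $L_{a,b}\varphi_n=\Phi_n'(V_p)L_{a,b}V_p+\Phi_n''(V_p)\,a\nabla V_p\cdot\nabla V_p$, and the only error term carries $\Phi_n''\le 0$ against the nonnegative form $a\nabla V_p\cdot\nabla V_p$ (by (A3)), so it is genuinely disposable; the drift now enters only through $\Phi_n'(V_p)\,b\cdot\nabla V_p$, which is exactly where (A4) applies with the correct sign. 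Since $0\le\Phi_n'\le 1$ and $\Phi_n'\uparrow 1$, one gets $\kappa_p\int(1+|x|^2)^{(p-2+\chi)/2}d\mu\le C_p\int(1+|x|^2)^{(p-2)/2}d\mu$ by monotone convergence, and your induction in steps of $\chi$ (base case $p=2$, where the right-hand side is a constant) then goes through. A minor additional point: existence under (A1), (A3), (A4) alone should be credited to \cite{veretennikov1997polynomial} rather than to (\ref{thm existence inv meas}), which as stated also assumes (A2).
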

\begin{proof}[\textbf{Proof}] See the proof of \cite[Lemma 1]{kouhkouh1} which is a particular case of the more general result in \cite[Theorem 6]{veretennikov1997polynomial} (in particular \cite[eq. (28) in \S 6]{veretennikov1997polynomial}). 
\end{proof}


We will also need an estimate from \cite{bogachev2018poisson} on the distance in TV-norm between two invariant measures. This is summarized in the companion paper \cite[\S 2.3]{kouhkouh1}.

\section{Preliminary results}\label{sec: preliminary results}

\subsection{An exchange property}

The following proposition, whose proof is in \cite{kouhkouh1}, allows us to exchange the order of the minimization (or maximization) with the integration with respect to a measure $q\in \mathcal{M}_{d}^{+}(\mathds{R}^{m})$, i.e. non-negative totally finite Borel measure with finite moment of order $d$.

\begin{proposition}\label{prop: exchange prop}
Let $f$ satisfies (A5). The following holds for any $q\in \mathcal{M}_{d}^{+}(\mathds{R}^{m})$
\begin{equation}
    \label{eq: exchange prop}
    \int_{\mathds{R}^{m}}\min\limits_{\alpha\in A} f(x,\alpha,q)\,\text{d}q(x) = \min\limits_{\upalpha(\cdot)\in\mathcal{A}}\int_{\mathds{R}^{m}}f(x,\upalpha(x),q)\,\text{d}q(x)
\end{equation}
where $A$ is a compact subset of $\mathds{R}^{k}$, for some $k>0$, and $\mathcal{A}$ is the set of measurable functions $\upalpha(\cdot):\mathds{R}^{m}\to A$. And the same holds true with $\max$ instead of $\min$.
\end{proposition}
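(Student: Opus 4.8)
The plan is to prove the two inequalities separately; the nontrivial direction rests on a measurable-selection argument, which is the heart of the matter.

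For the easy direction, fix $q\in\mathcal{M}_{d}^{+}(\mathds{R}^{m})$. For every selection $\upalpha(\cdot)\in\mathcal{A}$ and every $x\in\mathds{R}^{m}$ one has the pointwise bound $\min_{\alpha\in A}f(x,\alpha,q)\leq f(x,\upalpha(x),q)$. Both sides are $q$-integrable by (A5-(ii)), which provides $f(\cdot,\alpha,q)\in L^{1}(\mathds{R}^{m};q)$ uniformly in $\alpha$, so the pointwise infimum, being dominated, is integrable as well. Hence integrating against $q$ and then taking the infimum over $\upalpha(\cdot)\in\mathcal{A}$ yields
\[
\int_{\mathds{R}^{m}}\min_{\alpha\in A}f(x,\alpha,q)\,\text{d}q(x)\;\leq\;\inf_{\upalpha(\cdot)\in\mathcal{A}}\int_{\mathds{R}^{m}}f(x,\upalpha(x),q)\,\text{d}q(x).
\]

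For the reverse direction I would produce a selection that attains the pointwise minimum. Write $g(x):=\min_{\alpha\in A}f(x,\alpha,q)$. First, $g$ is Borel measurable: since $A$ is a compact, hence separable, subset of $\mathds{R}^{k}$ and $\alpha\mapsto f(x,\alpha,q)$ is (lower semi-)continuous, the minimum over $A$ coincides with the infimum over a fixed countable dense $D\subset A$, i.e. an infimum of countably many $x$-measurable functions. Next, consider the argmin correspondence $x\mapsto M(x):=\{\alpha\in A\,:\,f(x,\alpha,q)=g(x)\}$, which is nonempty by compactness of $A$ together with lower semicontinuity of $\alpha\mapsto f(x,\alpha,q)$, and is measurable. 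A measurable-selection theorem (Kuratowski--Ryll-Nardzewski, or Filippov's implicit-function lemma) then furnishes a Borel map $\upalpha_{*}(\cdot)\in\mathcal{A}$ with $\upalpha_{*}(x)\in M(x)$, i.e. $f(x,\upalpha_{*}(x),q)=g(x)$ for every $x$. Integrating gives $\int_{\mathds{R}^{m}}f(x,\upalpha_{*}(x),q)\,\text{d}q=\int_{\mathds{R}^{m}}g\,\text{d}q$, so the infimum on the right is bounded above by $\int_{\mathds{R}^{m}}\min_{\alpha\in A}f(x,\alpha,q)\,\text{d}q$ and is in fact attained, upgrading it to a genuine minimum. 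Combined with the easy direction this gives the claimed equality; the case of $\max$ follows by replacing $f$ with $-f$ (and lower with upper semicontinuity).

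The main obstacle is precisely this measurable selection, and securing the structural hypotheses it requires. Under (A5) alone one only has measurability of $f$ in $x$; to run the selection argument I would invoke the continuity-in-$\alpha$ of the continuity batch (B), which together with measurability in $x$ makes $(x,\alpha)\mapsto f(x,\alpha,q)$ a Carath\'eodory, hence jointly measurable, map, and therefore a \emph{normal integrand}, while $\mathcal{A}$ is a \emph{decomposable} set of measurable maps into $A$. In this language the statement is exactly Rockafellar's interchange theorem for normal integrands, and I would either cite it directly or reconstruct it through the selection step above. The delicate points to verify are the nonemptiness and measurability of $M(\cdot)$ and the applicability of the selection theorem; everything else is routine.
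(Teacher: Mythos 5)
This paper does not actually contain its own proof of Proposition \ref{prop: exchange prop}: it defers it to the companion paper \cite{kouhkouh1}. Your argument is the standard (and almost certainly the intended) route: the inequality $\int \min_{\alpha} f\,\text{d}q \leq \inf_{\upalpha}\int f(x,\upalpha(x),q)\,\text{d}q$ is immediate from the pointwise bound, and the reverse inequality plus attainment follow from a measurable selection of the argmin correspondence (Kuratowski--Ryll-Nardzewski), equivalently from Rockafellar's interchange theorem for normal integrands over the decomposable set $\mathcal{A}$. You are also right to flag the one genuine issue: (A5) alone gives only measurability in $x$ and says nothing about the $\alpha$-dependence, so under the stated hypothesis the pointwise $\min$ need not be attained nor $x\mapsto\min_{\alpha}f(x,\alpha,q)$ be Borel. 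One caution on how you propose to fill that gap: assumption (B1) of this paper is weak-$*$ continuity of the superposition map $\upalpha(\cdot)\mapsto f(\cdot,\upalpha(\cdot),\mu_{\upalpha})$ in $L^{1}(\mathds{R}^{m};\mu_{\upalpha})$, which does not directly yield pointwise continuity of $\alpha\mapsto f(x,\alpha,q)$; the Carath\'eodory property should instead be drawn from the pointwise H\"older condition \eqref{assumption phi 2} (the paper's stated sufficient condition for (B)), or one should assume joint measurability together with lower semicontinuity in $\alpha$ so that $f$ is a normal integrand. With that understood, your proof is correct and coincides in substance with what the companion paper does.
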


The \textit{exchange property} in Proposition \ref{prop: exchange prop} ensures that we can exchange the minimization over the parameters $\alpha$ and the duality product in $\mathcal{M}_{d}(\mathds{R}^{m})$ provided we define the second argument in $f$ as measurable functions $\upalpha(\cdot)\in\mathcal{A} =L^{\infty}(\mathds{R}^{m},A)$ instead of vectors $\alpha\in A$, that is,
\begin{equation*}
    \min\limits_{\upalpha(\cdot)\in \mathcal{A}}\;\langle\; f(\cdot\,, \upalpha(\cdot),q)\;,\; q\;\rangle = \langle\; \min\limits_{\alpha\in A} f(\cdot\,,\alpha,q)\;,\; q\; \rangle
\end{equation*}

Before we move to the next preliminary result, we state the second part of our assumptions bearing in mind the analysis in \S \ref{sec: diffusion}. 

\subsection{Assumptions: second batch}\label{sec:second batch}

Along with assumptions (A) and (C) presented earlier in \S \ref{sec:first batch}, we will need some assumptions on the dependence of our data on the parameter $\alpha \in A$.

Recall $A$ a compact subset of $\mathds{R}^{k}$ for some $k>0$, and $\mathcal{A}:=L^{\infty}(\mathds{R}^{m},A)$ the set of measurable functions $\upalpha(\cdot):\mathds{R}^{m}\to A$. We need the following definition.

\begin{definition}\label{def: cont} 
Noting that $\mathcal{A}\subset L^{\infty}(\mathds{R}^{m})=(L^{1}(\mathds{R}^{m}))^{*}$, we say\\
    \textbullet \, the map $\upalpha(\cdot)\mapsto g(\cdot\,,\upalpha(\cdot)) \in  L^{\ell}(\mathds{R}^{m};\mu_{\upalpha})$, $\ell\geq 1$, is \underline{weak-$*$ continuous at $\upalpha$} if 
    \begin{equation*}
    \begin{aligned}
        &\forall\,\varepsilon>0,\; \exists\, \delta>0 \text{ and a finite collection } \{\xi_{1}, \dots,\xi_{n}\} \text{ from } L^{1}(\mathds{R}^{m}) \text{ such that }\\
        & \quad \; \forall\, \upbeta(\cdot)\in \mathcal{A} \text{ satisfying } 
        \left|\int_{\mathds{R}^{m}}(\upalpha(x) - \upbeta(x))\xi_{i}(x)\text{d}x\right|<\delta \text{ for } i=1, \dots,n, \\
        & \quad \quad \quad \quad \quad \text{ we have } \|g(\cdot\,,\upalpha(\cdot)) - g(\cdot\,,\upbeta(\cdot))\|_{L^{\ell}(\mathds{R}^{m};\mu_{\upalpha})}<\varepsilon
    \end{aligned}
    \end{equation*}
    \textbullet \, the functional $\upalpha(\cdot)\mapsto \mathcal{G}(\upalpha)\in \mathds{R}$ is \underline{weak-$*$ continuous at $\upalpha$} if in the last line of the above definition we have $|\mathcal{G}(\upalpha) - \mathcal{G}(\upbeta)|<\varepsilon$.\\
    \textbullet \, the map\slash functional is \underline{weak-$*$ continuous}, if it is weak-$*$ continuous at every $\upalpha$.
\end{definition}

Let us recall from \S \ref{sec: diffusion} existence, uniqueness and regularity of $\mu_{\upalpha}$ the invariant probability measure satisfying the FPK equation $\mathcal{L}^{*}_{\upalpha}\mu_{\upalpha}=0$ where $\mathcal{L}^{*}_{\upalpha}$ is the formal adjoint operator to second order elliptic operator
\begin{equation*}
    \mathcal{L}_{\alpha}\varphi(x) \coloneqq \text{trace}\big( a(x,\alpha)D^{2}\varphi(x)\big) + b(x,\alpha)\cdot\nabla\varphi(x).
\end{equation*}

\textit{Assumptions (B):} The (continuous) dependence on $\alpha$.

For all $1\leq i,j\leq m$, we assume
\begin{description}
    \item[B1]{The map 
    $\upalpha(\cdot)\mapsto f(\cdot\, , \upalpha(\cdot),\mu_{\upalpha})$ is weak-$*$ continuous from $\mathcal{A}$ to $L^{1}(\mathds{R}^{m};\mu_{\upalpha})$,
    }
    \item[B2]{
    The maps 
    $\upalpha(\cdot)\mapsto a^{ij}(\cdot\, , \upalpha(\cdot))$ is weak-$*$ continuous from $\mathcal{A}$ to $L^{4}(\mathds{R}^{m};\mu_{\upalpha})$,  
    }
    \item[B3]{
    The maps 
    $\upalpha(\cdot)\mapsto \partial_{x_{j}} a^{ij}(\cdot\, , \upalpha(\cdot))$ has a polynomial growth and is weak-$*$ continuous from $\mathcal{A}$ to $L^{2}(\mathds{R}^{m};\mu_{\upalpha})$. 
    The notation $\partial_{x_{j}} a^{ij}(\cdot\, , \upalpha(\cdot))$ means the derivative w.r.t. the $j$-th component of the first argument,
    }
    \item[B4]{
    The maps 
    $\upalpha(\cdot)\mapsto b^{i}(\cdot\, , \upalpha(\cdot))$ is weak-$*$ continuous from $\mathcal{A}$ to $L^{2}(\mathds{R}^{m};\mu_{\upalpha})$.
    }
\end{description}

\textbf{Note:} In assumption (B1) what we mean is the weak-$*$ continuity only in the second argument of $f$, although its third argument is evaluated in $\mu_{\upalpha}$ that depends on $\upalpha$. In the notation of the above-mentioned definition, assumption (B1) means\\
    \begin{equation*}
    \begin{aligned}
        &\forall\,\varepsilon>0,\; \exists\, \delta>0 \text{ and a finite collection } \{\xi_{1}, \dots,\xi_{n}\} \text{ from } L^{1}(\mathds{R}^{m}) \text{ such that }\\
        & \quad \; \forall\, \upbeta(\cdot)\in \mathcal{A} \text{ satisfying } 
        \left|\int_{\mathds{R}^{m}}(\upalpha(x) - \upbeta(x))\xi_{i}(x)\text{d}x\right|<\delta \text{ for } i=1, \dots,n, \\
        & \quad \quad \quad \quad \quad \text{ we have } \|f(\cdot\,,\upalpha(\cdot), \mu_{\upalpha}) - g(\cdot\,,\upbeta(\cdot), \mu_{\upalpha})\|_{L^{\ell}(\mathds{R}^{m};\mu_{\upalpha})}<\varepsilon.
    \end{aligned}
    \end{equation*}
And this should hold for every $\upalpha\in \mathcal{A}$.

In our setting, the assumptions (B) are satisfied for example when $\phi=a^{ij},\partial_{x_{j}}a^{ij},b^{i}$ is such that
\begin{equation}\label{assumption phi 1}
    |\phi(x,\alpha) - \phi(x,\beta)| \leq \psi(x) |\alpha - \beta|^{r},\quad \forall \, x\in\mathds{R}^{m},\, \alpha,\beta\in A
\end{equation}
and when $f$ satisfies
\begin{equation}\label{assumption phi 2}
    |f(x,\alpha,\mu) - f(x,\beta,\mu)| \leq \tilde{\psi}(x,\mu) |\alpha - \beta|^{\tilde{r}},\quad \forall \, x\in\mathds{R}^{m},\, \alpha,\beta\in A, \, \mu\in \mathcal{P}_{d}(\mathds{R}^{m})
\end{equation}
where $r, \tilde{r}>0$, and $\psi(\cdot)$ and $\tilde{\psi}(\cdot,\mu)$ have a polynomial growth; see \cite[\S 3]{kouhkouh1}. 

We shall also need an assumption that will play a crucial role in the validity of our method: besides the standing assumptions (A), (B) and (C), we denote again by the operator $(\mathcal{L}_{\upalpha},D(\mathcal{L}_{\upalpha}))$ its closed extension $(\overline{L}^{\mu}_{A,b},D(\overline{L}^{\mu}_{A,b}))$ as given by (\ref{thm closed extension}) in Theorem \ref{thm summary diff} and Theorem \ref{thm sobolev domain extension}, and we assume the following holds true

\begin{description}
    \item[(A*)]{The domain $D(\mathcal{L}_{\upalpha})$ of the closed extension is nonempty and independent of $\upalpha$.}
\end{description}

Examples where this assumption is satisfied are discussed in \cite[Remark 4.2]{kouhkouh1}. It means that there exists $\widetilde{\upalpha}(\cdot) \in \mathcal{A}$ such that for all $\upalpha(\cdot)\in \mathcal{A}$, one has $D(\mathcal{L}_{\upalpha}) = D(\mathcal{L}_{\widetilde{\upalpha}})$, and  $\mathcal{L}_{\widetilde{\upalpha}}$ falls in the framework of the previous sections, in particular it satisfies Theorem \ref{thm summary diff} and Theorem \ref{thm sobolev domain extension}. The nonemptiness assumption is trivial otherwise the PDE problem \eqref{eq: mfg - intro} does not make sense. We will hereafter denote by $D(\mathcal{L}_{0})$ the latter domain. 

\subsection{A continuity property}

We are here concerned with the continuity of the functional $F:\mathcal{A}\to \mathds{R}$ defined by 
\begin{equation}
    \label{functional F}
    F(\upalpha) := \int_{\mathds{R}^{m}}f(x,\upalpha(x),\mu_{\upalpha})\text{d}\mu_{\upalpha}(x) = \langle f(\cdot\,,\upalpha(\cdot),\mu_{\upalpha}),\mu_{\upalpha} \rangle
\end{equation}
where $\mathcal{A}$ is endowed with its weak-$*$ topology, and $\mu_{\upalpha}$ is the unique invariant probability measure satisfying the FPK equation $\mathcal{L}^{*}_{\upalpha}\mu_{\upalpha}=0$ where $\mathcal{L}^{*}_{\upalpha}$ is the formal adjoint operator to second order elliptic operator
\begin{equation*}
    \mathcal{L}_{\upalpha}\varphi(x) = \text{trace} \big( a(x,\upalpha(x))D\varphi(x) \big) + b(x,\upalpha(x))\cdot \nabla \varphi(x).
\end{equation*}
We recall that existence, uniqueness and regularity of $\mu_{\upalpha}$ have been discussed in \S \ref{sec: diffusion}. We need the matrix-norm: for a matrix function $M=(M^{ij})\in \mathds{R}^{p\times q}$, $p,q\geq 1$, we write $|M(x)|:=\max_{1\leq i\leq p}\sum_{j=1}^{q}|M^{ij}(x)|$ and $\| \,|M|\,\|^{\ell}_{L^{\ell}(\mathds{R}^{m};\mu)} = \int |M(x)|^{\ell}\text{d}\mu$.

For simplicity of notation, we write the functions $a_{\upalpha} = a(\cdot,\upalpha(\cdot))$, $b_{\alpha}=b(\cdot,\upalpha(\cdot))$ 
and the weighted Lebesgue space $L^{\ell}_{\mu_{\upalpha}} := L^{\ell}(\mathds{R}^{m};\mu_{\upalpha})$.

\begin{proposition}\label{prop: continuity F}
Assume (A), (B) and (C) are satisfied. Then the functional $F$ defined in \eqref{functional F} is weak-$*$ continuous (see Definition \ref{def: cont}). 
\end{proposition}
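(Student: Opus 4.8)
The plan is to fix an arbitrary $\upalpha \in \mathcal{A}$ and $\eps > 0$, and to produce a basic weak-$*$ neighborhood of $\upalpha$ on which $|F(\upalpha) - F(\upbeta)|$ stays below $\eps$. Starting from $F(\upalpha) - F(\upbeta) = \langle f(\cdot,\upalpha(\cdot),\mu_\upalpha),\mu_\upalpha\rangle - \langle f(\cdot,\upbeta(\cdot),\mu_\upbeta),\mu_\upbeta\rangle$, I would insert the intermediate integrand $f(\cdot,\upbeta(\cdot),\mu_\upalpha)$ against both measures to obtain the three-term decomposition
\begin{align*}
F(\upalpha) - F(\upbeta) &= \underbrace{\langle f(\cdot,\upalpha(\cdot),\mu_\upalpha) - f(\cdot,\upbeta(\cdot),\mu_\upalpha),\, \mu_\upalpha\rangle}_{I}\\
&\quad + \underbrace{\langle f(\cdot,\upbeta(\cdot),\mu_\upalpha),\, \mu_\upalpha - \mu_\upbeta\rangle}_{II} + \underbrace{\langle f(\cdot,\upbeta(\cdot),\mu_\upalpha) - f(\cdot,\upbeta(\cdot),\mu_\upbeta),\, \mu_\upbeta\rangle}_{III}.
\end{align*}
Each term isolates a single source of variation: the explicit control-dependence of the integrand ($I$), the variation of the measure viewed as a dual object ($II$), and the variation of $f$ through its measure argument ($III$). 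All pairings are finite by (A5-(ii)), uniformly in the control.

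A preliminary ingredient, on which the estimates of $II$ and $III$ rest, is that $\upbeta \mapsto \mu_\upbeta$ is continuous from the weak-$*$ topology of $\mathcal{A}$ into $(\mathcal{M}^{+}_{d},\,\|\cdot\|_{TV})$: combining the total-variation estimate for invariant measures recalled in \cite{kouhkouh1} with the weak-$*$ continuity of $\upalpha\mapsto a^{ij}$, $\partial_{x_{j}}a^{ij}$, $b^{i}$ from (B2)--(B4), and using the uniform moment bounds of Lemma \ref{conv-prob law} to absorb the weights $(1+|x|)^{\theta}$ and the logarithmic-gradient factor $\nabla\rho_\upalpha/\rho_\upalpha$, one gets $\|\mu_\upalpha - \mu_\upbeta\|_{TV}\to 0$ as $\upbeta\to\upalpha$ weak-$*$. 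Granting this, term $I$ is handled directly by (B1): since $\mu_\upalpha$ is a probability measure, $|I| \le \|f(\cdot,\upalpha(\cdot),\mu_\upalpha) - f(\cdot,\upbeta(\cdot),\mu_\upalpha)\|_{L^1_{\mu_\upalpha}}$, which (B1) renders smaller than $\eps/3$ on a suitable weak-$*$ neighborhood. Term $III$ is handled by (C1): Fréchet differentiability gives, along the segment $\mu_t=(1-t)\mu_\upbeta+t\mu_\upalpha$ (which stays in $\mathcal{M}^{+}_{d}$ and is absolutely continuous by Theorem \ref{thm summary diff}), the local-Lipschitz bound $|III| \le \|\mu_\upalpha - \mu_\upbeta\|_{TV}\int_{\mathds{R}^{m}}\sup_{\alpha\in A}\|D_{\mu}f(\cdot,\alpha,\mu)\|_{\mathrm{op}}\,\mathrm{d}\mu_\upbeta$, the integral being finite and bounded uniformly for $\mu_\upbeta$ in a fixed TV-ball by (C1) together with Lemma \ref{conv-prob law}; hence $|III|\to 0$ with the TV distance.

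The main obstacle is term $II$, because $f$ is unbounded, so $\langle f(\cdot,\upbeta(\cdot),\mu_\upalpha),\mu_\upalpha - \mu_\upbeta\rangle$ cannot be controlled by $\|\mu_\upalpha - \mu_\upbeta\|_{TV}$ times a sup-norm. To overcome this I would use a truncation argument exploiting the polynomial growth of $f$ of order $d$ (the reading of (A5-(ii))) and the fact, from Lemma \ref{conv-prob law}, that $\mu_\upalpha$ and $\mu_\upbeta$ possess moments of every order bounded uniformly in the control, the constants depending only on those in (A1), (A3), (A4). Splitting $\mathds{R}^{m}=B_{R}\cup B_{R}^{c}$, on the tail one estimates $\int_{B_{R}^{c}}|f(\cdot,\upbeta(\cdot),\mu_\upalpha)|\,\mathrm{d}(\mu_\upalpha+\mu_\upbeta) \le C\int_{B_{R}^{c}}(1+|x|)^{d}\,\mathrm{d}(\mu_\upalpha+\mu_\upbeta)$, which is $\le \eps/6$ once $R$ is large, uniformly in $\upbeta$, by Chebyshev and the uniform $(d+1)$-moment bound; on $B_{R}$ one has $|f(\cdot,\upbeta(\cdot),\mu_\upalpha)|\le C(1+R)^{d}$, whence the contribution there is $\le C(1+R)^{d}\|\mu_\upalpha - \mu_\upbeta\|_{TV}$. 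Fixing $R$ first and only then shrinking the weak-$*$ neighborhood so that $\|\mu_\upalpha-\mu_\upbeta\|_{TV}$ is small forces $|II| \le \eps/3$. Intersecting the three weak-$*$ neighborhoods obtained for $I$, $II$, $III$ yields a single neighborhood of $\upalpha$ on which $|F(\upalpha)-F(\upbeta)| < \eps$, which proves weak-$*$ continuity at the arbitrary point $\upalpha$ and hence on all of $\mathcal{A}$.
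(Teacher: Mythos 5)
Your argument is correct and is structurally the same as the paper's: the identical three-term splitting (up to the immaterial choice of which intermediate integrand is paired with which measure), with (B1) disposing of the control-dependence term, (C1) giving the Lipschitz-in-TV bound for the variation of $f$ through its measure argument, and the TV-continuity of $\upbeta\mapsto\mu_{\upbeta}$ obtained from the invariant-measure estimate of the companion paper combined with (B2)--(B4). The one place where you genuinely depart from the paper is the term $\langle f,\,\mu_{\upalpha}-\mu_{\upbeta}\rangle$ with unbounded integrand: the paper simply cites \cite[Proposition 3.4]{kouhkouh1}, which delivers the bound $C\,\|\mu_{\upalpha}-\mu_{\upbeta}\|_{TV}^{1/2}$ in one stroke, whereas you give a self-contained cut-off argument (tail controlled by Chebyshev and moments, bulk controlled by $C(1+R)^{d}\|\mu_{\upalpha}-\mu_{\upbeta}\|_{TV}$). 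Your route has the virtue of making explicit why unboundedness of $f$ is not an obstruction, at the price of two points you should state more carefully: (i) the pointwise bound $|f(x,\alpha,\mu)|\leq C(1+|x|)^{d}$ is the informal reading of (A5-(ii)) that the paper endorses in the discussion following the assumption, not its literal content; and (ii) the tail estimate requires the moments of $\mu_{\upbeta}$ of order $d+1$ to be bounded \emph{uniformly} over $\upbeta$ in a neighbourhood of $\upalpha$ --- Lemma \ref{conv-prob law} as stated gives finiteness for each fixed control, so you should note that the Veretennikov bound depends only on the constants in (A4), which are uniform in $\alpha\in A$. With those two remarks supplied, both proofs are complete and yield the same conclusion.
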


\begin{proof}[\textbf{Proof}]
Let $\upalpha\in \mathcal{A}$ be fixed. For a given $\varepsilon>0$, we want to find $\delta>0$ and a finite collection $\{\xi_{1},\dots,\xi_{n}\}$ from $L^{1}(\mathds{R}^{m})$ such that $|F(\upalpha)-F(\upbeta)|<\varepsilon$ holds 
\begin{equation}\label{def: weak star}
    \forall\, \upbeta(\cdot)\in \mathcal{A} \text{ satisfying } 
        \left|\int_{\mathds{R}^{m}}(\upalpha(x) - \upbeta(x))\xi_{i}(x)\text{d}x\right|<\delta \text{ for } i=1, \dots,n.
\end{equation}
We start by writing
\begingroup
\allowdisplaybreaks
    \begin{align*}
        F(\upalpha) - F(\upbeta) & = \langle f(\cdot\,,\upalpha(\cdot),\mu_{\upalpha}),\mu_{\upalpha} \rangle - \langle f(\cdot\,,\upbeta(\cdot),\mu_{\upbeta}), \mu_{\upbeta} \rangle \\
        & = \langle f(\cdot\,,\upalpha(\cdot),\mu_{\upalpha}) - f(\cdot\,,\upbeta(\cdot),\mu_{\upalpha}),\mu_{\upalpha} \rangle\\
        & \quad \quad + \langle f(\cdot\,,\upbeta(\cdot),\mu_{\upalpha}) - f(\cdot\,,\upbeta(\cdot),\mu_{\upbeta}),\mu_{\upalpha} \rangle\\
        & \quad \quad \quad \quad + \langle f(\cdot\,,\upbeta(\cdot),\mu_{\upbeta}), \mu_{\upalpha} - \mu_{\upbeta} \rangle \\
        & \leq \|f(\cdot\,,\upalpha(\cdot),\mu_{\upalpha}) - f(\cdot\,,\upbeta(\cdot),\mu_{\upalpha})\|_{_{L^{1}_{\mu_{\upalpha}}}}\\
        & \quad \quad + \|f(\cdot\,,\upbeta(\cdot),\mu_{\upalpha}) - f(\cdot\,,\upbeta(\cdot),\mu_{\upbeta})\|_{_{L^{1}_{\mu_{\upalpha}}}}\\
        & \quad \quad \quad \quad + \langle f(\cdot\,,\upbeta(\cdot),\mu_{\upbeta}), \mu_{\upalpha} - \mu_{\upbeta} \rangle.
    \end{align*}
\endgroup

The first term is controlled with assumption (B1): it ensures existence of $\delta_{1}>0$ and a finite collection $\mathfrak{S}_{1}:=\{\xi_{i}^{1},\dots,\xi_{n}^{1}\}$ from $L^{1}(\mathds{R}^{m})$ s.t. 
\begin{equation*}
    \|f(\cdot\,,\upalpha(\cdot),\mu_{\upalpha}) - f(\cdot\,,\upbeta(\cdot),\mu_{\upalpha})\|_{_{L^{1}_{\mu_{\upalpha}}}} \leq \frac{\varepsilon}{3}.
\end{equation*}

The third term is estimated in the proof of \cite[Proposition 3.4]{kouhkouh1} where we have shown 
\begin{equation}\label{eq: proof diff TV}
    \begin{aligned}
        & |\langle f(\cdot\,,\upbeta(\cdot),\mu_{\upbeta}), \mu_{\upalpha} - \mu_{\upbeta} \rangle| \leq C\,\|\mu_{\upalpha} - \mu_{\upbeta}\|_{_{TV}}^{\frac{1}{2}}\\
        & \quad \quad \quad  \leq C\left( \| \,|b_{\upalpha} - b_{\upbeta} |\,\|_{_{L^{2}_{\mu_{\upalpha}}}} 
    + \| \,|\nabla a_{\upalpha} - \nabla a_{\beta}|\, \|_{_{L^{2}_{\mu_{\upalpha}}}}
    + \| \,|a_{\upalpha} - a_{\upbeta}|\,\|_{_{L^{4}_{\mu_{\upalpha}}}}
    \right)^{\frac{1}{2}}
    \end{aligned}
\end{equation}
for some constant $C>0$ depending on the parameters in the assumptions, on the diffusion matrix $a_{\upbeta}$ and on $\mu_{\upalpha},\mu_{\beta}$. 
Then, assumptions (B2, B3, B4) ensure existence of $\delta_{2}>0$ and a finite collection $\mathfrak{S}_{2}$ from $L^{1}(\mathds{R}^{m})$ s.t. the r.h.s of the latter inequality is less or equal $\varepsilon/3$, hence
\begin{equation*}
    |\langle f(\cdot\,,\upbeta(\cdot),\mu_{\upbeta}), \mu_{\upalpha} - \mu_{\upbeta} \rangle| \leq \frac{\varepsilon}{3}.
\end{equation*}
Indeed, $\mathfrak{S}_{2}$ would be the union of the 3 finite collections from $L^{1}(\mathds{R}^{m})$ for which the three terms $\|\,|b_{\upalpha} - b_{\upbeta} |\,\|_{_{L^{2}_{\mu_{\upalpha}}}}, \| \,|\nabla a_{\upalpha} - \nabla a_{\beta}|\, \|_{_{L^{2}_{\mu_{\upalpha}}}}, \| \,|a_{\upalpha} - a_{\upbeta}|\,\|_{_{L^{4}_{\mu_{\upalpha}}}}$ are respectively less or equal $\frac{1}{3}\frac{\varepsilon^{2}}{(3C)^{2}}=:\Tilde{\varepsilon}>0$ and $\delta_{2}$ would be the minimum of the three $\delta$'s in the definition.

We are then left with estimating the second term. We have
\begin{equation*}
    \begin{aligned}
    \|f(\cdot\,,\upbeta(\cdot),\mu_{\upalpha})  - f(\cdot\,,\upbeta(\cdot),\mu_{\upbeta})\|_{_{L^{1}_{\mu_{\upalpha}}}} & = \int |f(x,\upbeta(x),\mu_{\upalpha})  - f(x,\upbeta(x),\mu_{\upbeta})|\,\text{d}\mu_{\upalpha}(x).
    \end{aligned}
\end{equation*}
Using (C1), and whenever $\|\mu_{\upalpha}-\mu_{\upbeta}\|_{TV}$ is small enough, we can write
\begin{equation*}
    f(x,\upbeta(x),\mu_{\upbeta})  - f(x,\upbeta(x),\mu_{\upalpha}) = D_{\mu}f(x,\upbeta(x),\mu_{\upalpha})[\mu_{\upbeta}-\mu_{\upalpha}] + o\big(\|\mu_{\upalpha}-\mu_{\upbeta}\|_{TV}\big)
\end{equation*}
and then
\begin{equation*}
\begin{aligned}
    |f(x,\upbeta(x),\mu_{\upalpha})  - f(x,\upbeta(x),\mu_{\upbeta})| & \leq 2\,\| D_{\mu} f(x,\upbeta(x),\mu_{\upalpha}) \|_{_{\text{op}}} \| \mu_{\upalpha} - \mu_{\upbeta}\|_{TV}\\
    & \leq 2\,\| \mu_{\upalpha} - \mu_{\upbeta}\|_{TV} \sup\limits_{\beta\in A}\| D_{\mu} f(x,\beta,\mu_{\upalpha}) \|_{_{\text{op}}}.
\end{aligned}
\end{equation*}
Integrating w.r.t $\mu_{\upalpha}$ yields
\begin{equation*}
\begin{aligned}
    \| f(\cdot\,,\upbeta(\cdot),\mu_{\upalpha})  - f(\cdot\,,\upbeta(\cdot),\mu_{\upbeta})\|_{L^{1}_{\mu_{\upalpha}}} \leq \Tilde{C}\, \| \mu_{\upalpha} - \mu_{\upbeta}\|_{TV} 
\end{aligned}
\end{equation*}
where $\Tilde{C}=2\big\|\sup\limits_{\beta\in A}\| D_{\mu} f(\cdot\,,\beta,\mu_{\upalpha}) \|_{_{\text{op}}}\big\|_{L^{1}_{\mu_{\upalpha}}}<\infty$ thanks to the second statement in assumption (C1). \\
But we have seen that $\| \mu_{\upalpha} - \mu_{\upbeta}\|_{TV}$ can be made indeed arbitrarily small, e.g. less or equal $\varepsilon/3$, as long as $\upbeta$ satisfies \eqref{def: weak star} for some finite collection $\mathfrak{S}_{3}$ and $\delta_{3}>0$, and this is true thanks to the assumptions (B2, B3, B4) together with the second inequality in \eqref{eq: proof diff TV}. 

To sum up, we choose  $\mathfrak{S}:=\mathfrak{S}_{1}\cup \mathfrak{S}_{2} \cup \mathfrak{S}_{3}$ as a finite collection in $L^{1}(\mathds{R}^{m})$, then we have $|F(\upalpha) - F(\upbeta)|<\varepsilon$, for all $\upbeta$ satisfying \eqref{def: weak star} with $\xi\in\mathfrak{S}$ and $\delta := 
\min(\delta_{1},\delta_{2},\delta_{3})$. Indeed, if $\upbeta$ satisfies \eqref{def: weak star} for all $\xi \in \mathfrak{S}$ and for such $\delta$, then in particular it satisfies \eqref{def: weak star} for all $\xi \in \mathfrak{S}_{i}$ and for $\delta_{i}\geq \delta$, $i\in \{1,2,3\}$. So the three terms estimated in the proof are less or equal $\varepsilon/3$ and we have the desired weak-$*$ continuity. 
\end{proof}

\subsection{The primal problem}\label{sec: primal prob}



We state our \textit{primal} problem as follows
\begin{equation}
    \label{eq: primal - min}
    \tag{$\mathfrak{P}$}
    \min\limits_{q\in\mathcal{M}_{d}^{+}(\mathds{R}^{m})}\left\{\,\min\limits_{\upalpha(\cdot)\in \mathcal{A}}\; \langle f(\cdot\,,\upalpha(\cdot),q),q\rangle,\quad \text{s.t.: } 1-\langle 1,q\rangle = 0 \;\text{and }\, q\in \text{Ker}(\mathcal{L}_{\upalpha}^{*})\right\}
\end{equation}
where we recall $\langle f(\cdot\,,\upalpha(\cdot),q),q\rangle = \int_{\mathds{R}^m}f(x,\upalpha(x),q)\text{d}q(x)$. For the convenience of the reader, we will use the same notation as in \S\ref{sec: duality}, that is,
\begin{gather*}
    X=\mathcal{M}_{d}(\mathds{R}^{m})\quad \text{and} \quad Q=\mathcal{M}_{d}^{+}(\mathds{R}^{m})\\
    G_{1}:X \to \mathds{R},\quad \text{s.t.}\quad G_{1}(q)=1-\langle 1,q\rangle\\
    G_{2}:X\to X,\quad \text{s.t.}\quad G_{2}(q)=q\\
    G=(G_{1},G_{2})\quad \text{and} \quad Y=\mathds{R}\times X\\
    K_{1}=\{0\},\; K_{2}(\upalpha)=\text{Ker}(\mathcal{L}^{*}_{\upalpha})\quad \text{and} \quad K_{\upalpha}=K_{1}\times K_{2}(\upalpha)\subset Y
\end{gather*}
The \textit{primal} problem can then be expressed as
\begin{equation}
    \label{eq: primal - min 2}
    \tag{$\mathfrak{P}$}
    \min\limits_{q\in Q}\left\{\,\min\limits_{\upalpha(\cdot)\in \mathcal{A}}\; \langle f(\cdot\,,\upalpha(\cdot),q),q\rangle,\quad \text{s.t.: } G(q)\in K_{\upalpha}\right\}
\end{equation}

\begin{lemma}\label{lem: existence primal}
    Let the assumptions (A), (B) and (C) be satisfied. Then the primal problem \eqref{eq: primal - min 2} has an optimal solution $(\mu_{\upalpha_{\circ}},\upalpha_{\circ})$. In particular, its value is finite. 
\end{lemma}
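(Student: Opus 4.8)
The plan is to exploit uniqueness of the invariant measure in order to collapse the coupled feasible set of \eqref{eq: primal - min 2} onto the graph of the map $\upalpha\mapsto\mu_{\upalpha}$, thereby reducing the problem to the (unconstrained) minimization of the functional $F$ from \eqref{functional F}, and then to close the argument by the direct method: weak-$*$ compactness of $\mathcal{A}$ against weak-$*$ continuity of $F$.

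First I would characterize feasibility. A pair $(q,\upalpha)$ is feasible for \eqref{eq: primal - min 2} precisely when $q\in\mathcal{M}_{d}^{+}(\mathds{R}^{m})$, $\langle 1,q\rangle=1$ and $q\in\mathrm{Ker}(\mathcal{L}^{*}_{\upalpha})$; the first two conditions say $q$ is a probability measure, and together with the third they say $q\in\mathcal{M}^{a,b}_{\mathrm{ell}}$ as defined in \eqref{measures in kernel}. By (\ref{thm closed extension}) in Theorem \ref{thm summary diff} (the standing assumption (A) supplies (A1)--(A4)) this set is the singleton $\{\mu_{\upalpha}\}$, and by Lemma \ref{conv-prob law} the measure $\mu_{\upalpha}$ has finite moments of every order, so $\mu_{\upalpha}\in\mathcal{M}_{d}^{+}(\mathds{R}^{m})$ and the $d$-moment constraint is automatic. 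Hence the feasible pairs are exactly $\{(\mu_{\upalpha},\upalpha):\upalpha\in\mathcal{A}\}$, on which the objective equals $\langle f(\cdot\,,\upalpha(\cdot),\mu_{\upalpha}),\mu_{\upalpha}\rangle=F(\upalpha)$, so that $\mathrm{val}\,\eqref{eq: primal - min 2}=\inf_{\upalpha\in\mathcal{A}}F(\upalpha)$.

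Next I would apply the direct method to $F$. Since $A$ is compact, $\mathcal{A}=L^{\infty}(\mathds{R}^{m},A)$ lies in the ball of radius $\sup_{a\in A}|a|$ of $L^{\infty}(\mathds{R}^{m};\mathds{R}^{k})=(L^{1}(\mathds{R}^{m};\mathds{R}^{k}))^{*}$, which is weak-$*$ compact by Banach--Alaoglu (and metrizable on bounded sets, $L^{1}$ being separable). Using that $A$ is closed and convex, $\mathcal{A}$ is weak-$*$ closed: writing $A$ as a countable intersection of closed half-spaces $\{a:\ell_{j}\cdot a\le c_{j}\}$ and testing against non-negative $\psi\in L^{1}(\mathds{R}^{m})$, the inequalities $\int\psi\,(\ell_{j}\cdot\upalpha)\,dx\le c_{j}\int\psi\,dx$ pass to any weak-$*$ limit and force the limit to take values in $A$ a.e. Thus $\mathcal{A}$ is weak-$*$ compact, and since $F$ is weak-$*$ continuous by Proposition \ref{prop: continuity F}, its image $F(\mathcal{A})\subset\mathds{R}$ is compact; therefore $F$ attains its minimum at some $\upalpha_{\circ}\in\mathcal{A}$ and this minimum is finite. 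The pair $(\mu_{\upalpha_{\circ}},\upalpha_{\circ})$ is then feasible and optimal. Finiteness of the value is in any case clear, since by (A5-(ii)) together with $\mu_{\upalpha}\in\mathcal{M}_{d}^{+}(\mathds{R}^{m})$ and $\mu_{\upalpha}\ll dx$ (by the regularity statement \ref{thm regularity meas}), each $F(\upalpha)$ is a finite real number.

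I expect the main obstacle to be precisely the compactness step, namely guaranteeing that the weak-$*$ limit of a minimizing sequence of controls remains admissible (valued in $A$): this is where convexity of $A$ is essential to rule out the oscillation/relaxation phenomenon, and if $A$ were merely compact one would have to relax to controls valued in the closed convex hull. Aligning the notion of weak-$*$ continuity of Definition \ref{def: cont} with sequential extraction via metrizability on bounded sets is the remaining technical point to verify carefully; everything else is a direct reading of the reduction above.
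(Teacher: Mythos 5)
Your proof is correct and follows essentially the same route as the paper's: the feasible set collapses to $\{(\mu_{\upalpha},\upalpha):\upalpha\in\mathcal{A}\}$, reducing \eqref{eq: primal - min 2} to the minimization of the weak-$*$ continuous functional $F$ of Proposition \ref{prop: continuity F} over the weak-$*$ compact set $\mathcal{A}$, where the minimum is attained and finite by (A5-(ii)). The only difference is that you spell out why $\mathcal{A}$ is weak-$*$ closed, correctly observing that this needs $A$ convex -- the paper invokes Banach--Alaoglu in a footnote without addressing that point, so your remark is a refinement rather than a gap.
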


\begin{proof}[\textbf{Proof.}]
The proof is similar to  \cite[Lemma 4.1]{kouhkouh1}. We repeat it here for self-containedness. 
Recall that the feasible set of our \textit{primal} problem \eqref{eq: primal - min 2} is $\{q\in Q\,:\, G(q) \in K_{\upalpha}\} = \{\mu_{\upalpha}\}$ a singleton, where $\mu_{\upalpha}\in \mathcal{P}_{d}(\mathds{R}^{m})$. Hence, \eqref{eq: primal - min 2} is equivalent to
\begin{equation}
    \label{eq: primal sharp}
    \tag{$\mathfrak{P}_{_{_{\!\!\sharp}}}$}
    \min\limits_{\upalpha(\cdot)\in\mathcal{A}} \,F(\upalpha):=\langle f(\cdot\,,\upalpha(\cdot), \mu_{\upalpha}),\mu_{\upalpha} \rangle.
\end{equation}
the objective function $F(\cdot)$ is the one introduced in \eqref{functional F}. We are then minimizing a weak-$*$ continuous functional (thanks to Proposition \ref{prop: continuity F}) on the weak-$*$ compact\footnote{This is a consequence of Banach–Alaoglu's theorem; see e.g. \cite[Theorem 3.16, p.66]{brezis2011functional}} subset $\mathcal{A}$. Then $F(\cdot)$ is bounded on $\mathcal{A}$ and achieves its minimum on $\mathcal{A}$ (see \cite[Theorem 2, p.128]{luenberger1997optimization}). Its value is finite using assumption (A5(ii)) and existence of the minimum.
\end{proof}

As we have discussed in the companion paper \cite{kouhkouh1}, the latter existence result suggests a new description of the \textit{primal} problem \eqref{eq: primal - min 2}. Given an optimal solution $(\mu_{\upalpha_{\circ}},\upalpha_{\circ})\in X\times \mathcal{A}$, the problem \eqref{eq: primal - min 2} can be equivalently expressed as
\begin{equation}
    \label{eq: primal circ}
    \tag{$\mathfrak{P}_{_{_{\!\!\circ}}}$}
    \min\limits_{q\in Q} \,\langle f(\cdot\,,\upalpha_{\circ}(\cdot), \mu_{\upalpha_{\circ}}),q \rangle, \quad \text{ s.t.: } \quad G(q) \in K_{\upalpha_{\circ}}.
\end{equation}
Indeed, solving \eqref{eq: primal circ} yields the unique invariant probability measure $\mu_{\upalpha_{\circ}}$ since the feasible set is $\{q\in Q\,:\, G(q) \in K_{\upalpha_{\circ}}\} = \{\mu_{\upalpha_{\circ}}\}$. Yet, the advantage of using \eqref{eq: primal circ} is that, as we will later see, it is a convex problem for which strong duality holds. This is reminiscent of the \textit{hidden convexity} in the celebrated Benamou-Brenier formulation of optimal transport \cite{benamou2000computational}. See also Remark \ref{rmk: TV} for a comparison with optimal transport problem. 

To sum up, we have three equivalent formulations of the \textit{primal} problem: \eqref{eq: primal - min 2} will be used to construct the dual problem, \eqref{eq: primal sharp} is used to prove existence, and \eqref{eq: primal circ} to ensure strong duality holds. We refer to the end of \S 4.1 in \cite{kouhkouh1} for a more complete discussion. 


\subsection{The dual problem}

In order to deduce the corresponding \textit{dual} problem, we follow a parametric (conjugate) duality scheme as in \cite[\S 2.5.3, p. 107]{bonnans2013perturbation}. Therefore we embed the problem \eqref{eq: primal - min 2} in a family of parameterized problems, where $y\in Y$ is the parameter vector and consider the function (again using the notation in \S \ref{sec: primal prob})
\begin{equation*}
    \phi(q,y) = \min\limits_{\upalpha(\cdot)\in \mathcal{A}}\;\left\{\,  \langle f(\cdot\,,\upalpha(\cdot),q),q\rangle + I_{K_{\upalpha}}(G(q) + y)\,\right\}.
\end{equation*}
It is clear that when setting $y=0$, we recover the objective function in \eqref{eq: primal - min 2}. 

We also consider the following (Lagrangian) function, $L:X\times Y^{*}\times \mathcal{A} \to \mathds{R}$, analogue to \eqref{lagrangian} and such that
\begin{equation}\label{eq: Lagrangian - min}
    L(q,y^{*}, \upalpha) \coloneqq \langle f(\cdot\,,\upalpha(\cdot),q),q\rangle + \langle y^{*}, G(q) \rangle_{Y^{*},Y}.
\end{equation}
Using the Legendre-Fenchel transform, we have (see \cite[\S 4.2]{kouhkouh1})
\begin{equation*}
	\phi^{*}(q^{*},y^{*}) = \sup\limits_{q \in Q}\,\left\{ \langle q^{*},q \rangle - \min\limits_{\upalpha(\cdot)\in \mathcal{A}} \,\{L(q,y^{*},\upalpha) - I^{*}_{K_{\upalpha}}(y^{*})\}\,\right\}
\end{equation*}
The \textit{dual} of the parameterized \textit{primal} problem is then obtained as
\begin{equation*}
    \max\limits_{y^{*}\in Y^{*}}\,\{ \langle y^{*}, y \rangle - \phi^{*}(0,y^{*})\,\}
\end{equation*}
which is 
\begin{equation*}
    \max\limits_{y^{*}\in Y^{*}}\,\left\{ \langle y^{*}, y \rangle + \inf\limits_{q \in Q}\min\limits_{\upalpha(\cdot)\in \mathcal{A}} \{L(q,y^{*},\upalpha) - I^{*}_{K_{\upalpha}}(y^{*})\}\,\right\}
\end{equation*}
Finally, the \textit{dual} problem  to \eqref{eq: primal - min 2} is obtained by setting $y=0$, that is
\begin{equation}
    \label{eq: dual - min}
    \tag{$\mathfrak{D}$}
    \max\limits_{y^{*}\in Y^{*}}\,\left\{\inf\limits_{q \in Q}\min\limits_{\upalpha(\cdot)\in \mathcal{A}} \{L(q,y^{*},\upalpha) - I^{*}_{K_{\upalpha}}(y^{*})\}\,\right\}.
\end{equation}
We will now make \eqref{eq: dual - min} more explicit. 
\begin{lemma}\label{lem: main - min}
The problem \eqref{eq: dual - min} is equivalent to
\begin{equation}
    \label{eq: dual - min 1}
    \tag{$\mathfrak{D}$}
    \max\limits_{\substack{c\in\mathds{R}\\u\in \mathcal{X}}}\;\left\{\, c + \;\inf\limits_{q \in Q}\left\{\,\langle H(x,\nabla u, D^{2}u,q) - c , q \rangle\,\right\}\,\right\},
\end{equation}
where $H(x,\nabla u(x), D^{2}u(x),q) = \min\limits_{\alpha\in A}\{\, -\mathcal{L}_{\alpha}u(x) + f(x,\alpha,q) \,\}$ and $\mathcal{X}$ is such that
\begin{equation}\label{eq: functional space X - main}
    \mathcal{X} = D(\mathcal{L}_{0})\cap\{u:\mathds{R}^{m}\to \mathds{R}, \textit{Borel-meas.}\;|\; \exists\;C>0,\; |u(x)| \leq C(1+|x|^{\kappa})\}
\end{equation}
with $\kappa = d+1-\theta$,  that is, the two optimization problems have the same set of optimal solutions and the same optimal value.
\end{lemma}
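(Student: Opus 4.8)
The plan is to make the three ingredients of \eqref{eq: dual - min} explicit in turn and to recognize the resulting objective as that of \eqref{eq: dual - min 1}. First I would split the dual variable according to $Y^{*}=\mathds{R}\times X^{*}$, writing $y^{*}=(c,w)$ with $c\in\mathds{R}$ dual to the scalar constraint $G_{1}$ and $w\in X^{*}$ dual to $G_{2}(q)=q$. Then $\langle y^{*},G(q)\rangle = c\,(1-\langle 1,q\rangle)+\langle w,q\rangle$, so that $L(q,y^{*},\upalpha)=c+\langle f(\cdot,\upalpha(\cdot),q)-c+w,\,q\rangle$. Since $K_{\upalpha}=\{0\}\times\mathrm{Ker}(\mathcal{L}^{*}_{\upalpha})$ and the support function of $\{0\}$ vanishes, $I^{*}_{K_{\upalpha}}(y^{*})=\sup_{z\in\mathrm{Ker}(\mathcal{L}^{*}_{\upalpha})}\langle w,z\rangle$, which equals $0$ when $w$ annihilates $\mathrm{Ker}(\mathcal{L}^{*}_{\upalpha})$ and $+\infty$ otherwise; hence the effective domain of the outer maximization (where the objective exceeds $-\infty$) forces $w\in(\mathrm{Ker}(\mathcal{L}^{*}_{\upalpha}))^{\perp}$.

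The decisive step is to identify this annihilator. Because $\mathcal{L}_{\upalpha}$ is the closed generator of a $C_{0}$-semigroup on $L^{1}(\mathds{R}^{m};\mu_{\upalpha})$ (by (\ref{thm closed extension}) in Theorem \ref{thm summary diff}), the annihilator/closed-range duality $(\mathrm{Ker}(\mathcal{L}^{*}_{\upalpha}))^{\perp}=\overline{\mathrm{Range}(\mathcal{L}_{\upalpha})}$ lets me represent the admissible multipliers as $w=-\mathcal{L}_{\upalpha}u$ with $u$ in the domain of $\mathcal{L}_{\upalpha}$. Here assumption (A*) is what makes the representation uniform: it guarantees that $D(\mathcal{L}_{\upalpha})=D(\mathcal{L}_{0})$ is independent of $\upalpha$, so a single function $u\in\mathcal{X}\subset D(\mathcal{L}_{0})$ serves as the genuine dual variable for every $\upalpha$ entering the inner minimization, and the fixed term $\langle w,q\rangle$ is replaced by the $\upalpha$-coupled term $\langle-\mathcal{L}_{\upalpha}u,q\rangle$. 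On this effective domain $I^{*}_{K_{\upalpha}}(y^{*})=0$, whence $L(q,y^{*},\upalpha)-I^{*}_{K_{\upalpha}}(y^{*})=c+\langle-\mathcal{L}_{\upalpha}u+f(\cdot,\upalpha(\cdot),q)-c,\,q\rangle$.

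It then remains to pass the minimization inside the integral. Applying the exchange property (Proposition \ref{prop: exchange prop}) to the integrand $-\mathcal{L}_{\alpha}u(x)+f(x,\alpha,q)-c$ gives $\min_{\upalpha}\langle-\mathcal{L}_{\upalpha}u+f(\cdot,\upalpha(\cdot),q)-c,\,q\rangle=\langle H(x,\nabla u,D^{2}u,q)-c,\,q\rangle$, so the dual objective becomes $c+\inf_{q}\langle H(x,\nabla u,D^{2}u,q)-c,\,q\rangle$ and the maximization over $(c,w)$ turns into the maximization over $(c,u)$, giving the same value and, modulo additive constants in $u$ (which lie in $\mathrm{Ker}(\mathcal{L}_{\upalpha})$ and leave $-\mathcal{L}_{\upalpha}u$ unchanged), the same optimizers. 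The growth space $\mathcal{X}$ with $\kappa=d+1-\theta$ is forced precisely here: by (A6) one has $|b|\lesssim(1+|x|)^{\theta}$, hence $|b\cdot\nabla u|\lesssim(1+|x|)^{\theta+\kappa-1}=(1+|x|)^{d}$, which together with the bounded nondegenerate diffusion (A3), the local Sobolev regularity of elements of $D(\mathcal{L}_{0})$ (Theorem \ref{thm sobolev domain extension}) and the finite $d$-moment of $q\in\mathcal{M}_{d}^{+}(\mathds{R}^{m})$ (Lemma \ref{conv-prob law}) makes $\langle\mathcal{L}_{\upalpha}u,q\rangle$ well-defined and legitimizes applying Proposition \ref{prop: exchange prop} to the augmented integrand.

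The step I expect to be the main obstacle is the reparametrization of the second paragraph: the abstract dual carries a single multiplier $w$, whereas the explicit form needs the $\upalpha$-dependent object $-\mathcal{L}_{\upalpha}u$ sitting inside $\min_{\upalpha}$. Making this rigorous requires the annihilator/closed-range duality for the unbounded closed operator $\mathcal{L}_{\upalpha}$ on the weighted space $L^{1}(\mathds{R}^{m};\mu_{\upalpha})$ (with attention to the correct weak-$*$ topologies on $X$ and $X^{*}$ and to closedness of the range), together with the domain-independence (A*); without (A*) one could not represent the multiplier by one $u$ common to all competing $\upalpha$. By contrast, the integrability bookkeeping behind $\kappa=d+1-\theta$ and the final identification via Proposition \ref{prop: exchange prop} are routine once these structural points are settled.
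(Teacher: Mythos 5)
Your proposal is correct and follows essentially the same route as the paper's proof: computing $I^{*}_{K_{\upalpha}}$ as the support function of $\{0\}\times\mathrm{Ker}(\mathcal{L}^{*}_{\upalpha})$, identifying the polar cone with $\mathds{R}\times\mathrm{cl}(\mathrm{range}(\mathcal{L}_{\upalpha}))$, representing the multiplier as $(c,-\mathcal{L}_{\upalpha}u)$ with a single $u\in D(\mathcal{L}_{0})$ via (A*), and concluding with the exchange property of Proposition \ref{prop: exchange prop} and the growth bound $\kappa=d+1-\theta$. Your explicit bookkeeping $|b\cdot\nabla u|\lesssim(1+|x|)^{\theta+\kappa-1}=(1+|x|)^{d}$ is in fact slightly more detailed than the paper's remark at that point, but the argument is the same.
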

\begin{remark}\label{rmk: domain of L in sobolev}
(A*) together with Theorem \ref{thm sobolev domain extension} ensure that $D(\mathcal{L}_{0}) \subset W^{r,2}_\text{loc}(\mathds{R}^{m})$.
\end{remark}

\begin{proof}[\textbf{Proof} of Lemma \ref{lem: main - min}]
The proof is in the line of the one of \cite[Lemma 3]{kouhkouh1}, the only difference being the dependence of $H$ (through $f$) on the measure $q$. We repeat it here for the sake of clarity. We have
\begin{equation}\label{eq: conjugate indicator}
    \begin{aligned}
    I^{*}_{K_{\alpha}}(y^{*}) = \sigma(y^{*};K_{\upalpha})
     = \left\{
    \begin{aligned}
    0 ,\quad   &\text{ if }\; y^{*}\in (K_{\upalpha})^{-}&\\
    +\infty,\quad   &\text{ otherwise }&
    \end{aligned}\right.
    \end{aligned}
\end{equation}
Recalling the definition $K_{\upalpha} = \{0\}\times \text{Ker}(\mathcal{L}_{\upalpha})$, we have
\begin{equation*}
    \begin{aligned}
    y^{*}\in (K_{\upalpha})^{-} & \Leftrightarrow\, (c,\omega) \in \bigg(\{0\}\times \text{Ker}(\mathcal{L}_{\upalpha})\bigg)^{-}\\
    & \Leftrightarrow\, (c,\omega) \in \mathds{R}\times (\text{Ker}(\mathcal{L}_{\upalpha}))^{\bot}\\
    & \Leftrightarrow\, (c,\omega) \in \mathds{R}\times \text{cl}(\text{range}(\mathcal{L}_{\upalpha}))
    \end{aligned}
\end{equation*}
Since we are working with $\mathcal{L}_{\upalpha}$ in its closed extension, we have
\begin{equation*}
    \begin{aligned}
    \omega\in \text{cl}(\text{range}(\mathcal{L}_{\upalpha})) & \Leftrightarrow\, \exists\;u\in D(\mathcal{L}_{\upalpha}),\;\text{s.t. }\; \omega = -\mathcal{L}_{\upalpha}u\\
    & \Leftrightarrow\, \exists\;u\in D(\mathcal{L}_{0}),\;\text{s.t. }\; \omega = -\mathcal{L}_{\upalpha}u
    \end{aligned}
\end{equation*}
where the last equivalence is obtained thanks to the assumption (A*) which guarantees that $D(\mathcal{L}_{\upalpha}) = D(\mathcal{L}_{0})$ for all $\upalpha(\cdot) \in \mathcal{A}$. The latter being independent of $\upalpha(\cdot)$, we can isolate it from the minimization over $\upalpha$ and write is as a subscript of the maximization over $(c,u)$. Then the \textit{dual} problem becomes 
\begin{equation}
    \label{eq: dual - min 3}
    \tag{$\mathfrak{D}$}
    \max\limits_{\substack{c\in\mathds{R}\\u\in D(\mathcal{L}_{0})}}\,\inf\limits_{q \in Q}\min\limits_{\upalpha(\cdot)\in \mathcal{A}} \{L(q,y^{*},\upalpha),\;\;\text{ s.t.: }\, y^{*}=(c,-\mathcal{L}_{\upalpha}u)\,\}.
\end{equation}
Recalling the definition \eqref{eq: Lagrangian - min} of $L$ and the notations introduced earlier, we have
\begin{equation*}
    \begin{aligned}
    L(q, y^{*},\upalpha) & = \langle f(\cdot\,,\upalpha(\cdot),q),q\rangle + \langle y^{*}, G(q) \rangle_{Y^{*},Y}\\
    & = \langle f(\cdot\,,\upalpha(\cdot),q) ,q\rangle + c(1-\langle 1, q \rangle) + \langle -\mathcal{L}_{\upalpha}u(\cdot),q \rangle\\
    & = c + \langle f(\cdot\,,\upalpha(\cdot),q) - \mathcal{L}_{\upalpha}u(\cdot) - c , q \rangle
    \end{aligned}
\end{equation*}
hence we have, using the \textit{exchange property} in Proposition \ref{prop: exchange prop},
\begin{equation*}
    \begin{aligned}
    & \min\limits_{\upalpha(\cdot)\in \mathcal{A}} \{L(q,y^{*},\upalpha),\;\;\text{ s.t.: }\, y^{*}=(c,-\mathcal{L}_{\upalpha}u)\,\} \\
    & \quad \quad \quad \quad \quad = c + \min\limits_{\upalpha(\cdot)\in \mathcal{A}}\,\left\{\, \langle f(\cdot\,,\upalpha(\cdot),q) - \mathcal{L}_{\upalpha}u(\cdot) - c , q \rangle\,\right\}\\
    & \quad \quad \quad \quad \quad = c + \langle \min\limits_{\alpha\in A}\{f(\cdot\,,\alpha,q) - \mathcal{L}_{\alpha}u(\cdot)\} - c , q \rangle\\
    & \quad \quad \quad \quad \quad = c + \langle H(x,\nabla u, D^{2}u,q) - c , q \rangle.
    \end{aligned}
\end{equation*}
But since $Q$ is made of non-negative measures with finite moment of order $d$, we need $u$ to have a polynomial growth of order at most $\kappa=d+1-\theta$ (see (A3), (A5) and (A6)). The \textit{dual} problem finally takes the form
\begin{equation*}
    \max\limits_{\substack{c\in\mathds{R}\\u\in \mathcal{X}}}\;\left\{\, c + \;\inf\limits_{q \in Q}\langle H(x,\nabla u, D^{2}u,q) - c , q \rangle\,\right\}.
\end{equation*}
where the functional space $\mathcal{X}$ is defined as
\begin{equation*}
    \mathcal{X} = D(\mathcal{L}_{0})\cap\{u:\mathds{R}^{m}\to \mathds{R}, \textit{Borel-meas.}\;|\; \exists\;C>0,\; |u(x)| \leq C(1+|x|^{\kappa})\}
\end{equation*}
and $\kappa=d+1-\theta$. This concludes the proof.
\end{proof}

\section{Main result: ergodic MFG system}\label{sec: main results}

\subsection*{The PDE problem}

We address the problem of existence of solutions to an ergodic mean-field games (MFG) system, that is
\begin{equation}
    \label{eq: mfg - main}
    \begin{aligned}
    &\quad\quad\textit{Find } (c,u, \mu)\in\mathds{R}\times \mathcal{X}(\mathds{R}^{m})\times\mathcal{P}(\mathds{R}^{m}),\, \textit{s.t.:}\\ 
            & H(x,\nabla u(x),D^{2}u(x),\mu) = c\quad  \text{ and }\; - \mathcal{L}^{*}_{\upalpha_{[u,\mu]}}\mu = 0 
    \end{aligned}
\end{equation}
where $\mathcal{X}$ is a functional space (part of the unknowns), $\mathcal{P}$ is the set of probability measures and the Hamiltonian is of the form 
\begin{equation}\label{eq: Ham}
     H(x,\nabla u(x),D^{2}u(x),\mu)\coloneqq \min\limits_{\alpha\in A}\{\,-\mathcal{L}_{\alpha}u(x) + f(x,\alpha,\mu)\,\},
\end{equation}
the diffusion operator $\mathcal{L}_{\alpha}$ is a linear operator given by
\begin{equation*}
    \mathcal{L}_{\alpha}\varphi(x) \coloneqq \text{trace}\big( a(x,\alpha)D^{2}\varphi(x)\big) + b(x,\alpha)\cdot\nabla\varphi(x) 
\end{equation*}
and its adjoint $\mathcal{L}^{*}_{\alpha}$ is then
\begin{equation*}
    \mathcal{L}^{*}_{\alpha}\rho(x) = \text{trace}\big(D^{2}(a(x,\alpha)\rho(x))\big) - \text{div}\big(b(x,\alpha)\rho(x)\big).
\end{equation*}
The second equation in \eqref{eq: mfg - main} is nothing but $-\mathcal{L}^{*}_{\upalpha}\mu = 0$ where $\upalpha \equiv \upalpha_{[u,\mu]}(\cdot)\in \mathcal{A}$ is a function of $x$ and it depends on $u$ and $\mu$ such that
\begin{equation*}
	\upalpha_{[u,\mu]}(x) \in \argmin\limits_{\alpha \in A}\{\,-\mathcal{L}_{\alpha}u(x) + f(x,\alpha,\mu)\,\}.
\end{equation*}
The case where $H$ is given with a $\max$ (instead of a $\min$) can be obtained analogously (see \cite{kouhkouhPhD} for further details).

\subsection*{The optimality conditions}

We check that the optimality conditions as stated in \S\ref{sec: duality}, in particular \eqref{optimality conditions - 2} and \eqref{equiv cond normal cone}, still hold in our framework. In order to do so, we start from the \textit{duality gap} (or \textit{duality inequality}) which states that the value of the \textit{dual} problem \eqref{eq: dual - min} is less or equal than the value of the \textit{primal} problem \eqref{eq:  primal - min 2}. Recalling the definition \eqref{eq: Lagrangian - min} of the Lagrangian function $L$ 
\begin{equation*}
    L(q,y^{*}, \upalpha) = \langle f(\cdot\,,\upalpha(\cdot),q),q\rangle + \langle y^{*}, G(q) \rangle_{Y^{*},Y}
\end{equation*}
and the value of the \textit{dual} problem being less or equal the value of the \textit{primal} problem (see \S \ref{sec: duality}), we have
\begin{equation*}
\begin{aligned}
    &\max\limits_{y^{*}\in Y^{*}}\min\limits_{q\in Q}\min\limits_{\upalpha(\cdot)\in \mathcal{A}}\{L(q,y^{*},\upalpha)-I_{K_{\upalpha}}^{*}(y^{*})\}\\
    &\quad\quad\quad \leq \min\limits_{q\in Q}\min\limits_{\upalpha(\cdot)\in \mathcal{A}}\{\langle f(\cdot\,,\upalpha(\cdot),q),q\rangle + I_{K_{\upalpha}}(G(q))\}\\
    &\quad\quad\quad \leq \min\limits_{q\in Q}\min\limits_{\upalpha(\cdot)\in \mathcal{A}}\{ L(q,y^{*},\upalpha) + I_{K_{\upalpha}}(G(q)) - \langle y^{*},G(q) \rangle_{Y^{*},Y}\},\;\forall\,y^{*}\in Y^{*}.
\end{aligned}
\end{equation*}
Let us denote by $(q_{\circ},\upalpha_{\circ})$ an optimal solution in the \textit{primal} problem \eqref{eq: primal - min 2} and by $y^{*}_{\circ}$ an optimal solution in the \textit{dual} problem \eqref{eq: dual - min}. We then have
\begin{equation}\label{eq: weak duality}
\begin{aligned}
    & \min\limits_{q\in Q}\min\limits_{\upalpha(\cdot)\in \mathcal{A}}\{L(q,y^{*}_{\circ},\upalpha)-I_{K_{\upalpha}}^{*}(y^{*}_{\circ})\} \\
    &\quad \quad \quad \quad \quad \quad  \leq  L(q_{\circ},y^{*}_{\circ},\upalpha_{\circ}) + I_{K_{\upalpha_{\circ}}}(G(q_{\circ})) - \langle y^{*}_{\circ},G(q_{\circ}) \rangle_{Y^{*},Y}\\
    &\quad \quad \quad \quad \quad \quad \quad \quad \quad \quad \quad \quad  = \langle f(\cdot\,, \upalpha_{\circ}(\cdot),q_{\circ}), q_{\circ} \rangle + I_{K_{\upalpha_{\circ}}}(G(q_{\circ})).
\end{aligned}
\end{equation}

The optimality conditions are obtained when we reach equality in the above inequality. We can then characterize the optimal \textit{primal} and \textit{dual} solutions and provide a no-\textit{duality gap} condition. Suppose the left hand side minimization in the above inequality is reached in the pair of optimal solutions $(q_{\circ},\upalpha_{\circ})$. Therefore, the latter inequality reduces to 
\begin{equation*}
    0 \leq I^{*}_{K_{\upalpha_{\circ}}}(y^{*}_{\circ}) + I_{K_{\upalpha_{\circ}}}(G(q_{\circ})) - \langle y^{*}_{\circ}, G(q_{\circ}) \rangle_{Y^{*},Y}.
\end{equation*}
This is the Fenchel-Young inequality, and equality holds if and only if we have
\begin{equation}\label{eq: opt cond main - 3}
    y^{*}_{\circ}\in \partial I_{K_{\upalpha_{\circ}}}(G(q_{\circ})) = N_{K_{\upalpha_{\circ}}}(G(q_{\circ})).
\end{equation}
Since $K_{\upalpha_{\circ}}$ is a convex cone, then $y^{*}_{\circ}\in N_{K_{\upalpha_{\circ}}}(G(q_{\circ}))$ is equivalent to 
\begin{equation}
    G(q_{\circ})\in K_{\upalpha_{\circ}},\quad y^{*}_{\circ}\in (K_{\upalpha_{\circ}})^{-}\; \text{ and }\; \langle y^{*}_{\circ},G(q_{\circ}) \rangle_{Y^{*},Y} = 0.
\end{equation}
Recalling the definition \eqref{eq: conjugate indicator}, we have $I^{*}_{K_{\upalpha_{\circ}}}(y^{*}_{\circ}) = 0$ when $y^{*}_{\circ}\in (K_{\upalpha_{\circ}})^{-}$. So going back to the inequality in \eqref{eq: weak duality}, which we are now supposing to be an equality (\textit{no-duality gap}), we have 
\begin{equation*}
\begin{aligned}
    \min\limits_{q\in Q}\min\limits_{\upalpha(\cdot)\in \mathcal{A}}\{L(q,y^{*}_{\circ},\upalpha)-I_{K_{\upalpha}}^{*}(y^{*}_{\circ})\} =  L(q_{\circ},y^{*}_{\circ},\upalpha_{\circ}) - I_{K_{\upalpha_{\circ}}}^{*}(y^{*}_{\circ}) =  L(q_{\circ},y^{*}_{\circ},\upalpha_{\circ}).
\end{aligned}
\end{equation*}
Recalling \eqref{eq: conjugate indicator}, we have $\min\limits_{q\in Q}\min\limits_{\upalpha(\cdot)\in \mathcal{A}}\{L(q,y^{*}_{\circ},\upalpha)-I_{K_{\upalpha}}^{*}(y^{*}_{\circ})\} \leq  \min\limits_{q\in Q}\min\limits_{\upalpha(\cdot)\in \mathcal{A}} \; L(q,y^{*}_{\circ},\upalpha)$ 
which finally yields, together with the previous equality,
\begin{equation*}
    L(q_{\circ},y^{*}_{\circ},\upalpha_{\circ}) \leq \min\limits_{q\in Q}\min\limits_{\upalpha(\cdot)\in \mathcal{A}} \; L(q,y^{*}_{\circ},\upalpha).
\end{equation*}

To sum up, we have the following sufficient optimality conditions which also guarantee the absence of the \textit{duality gap}
\begin{equation} \label{optimality conditions - main}
\left\{\;
    \begin{aligned}
    & (q_{\circ},\upalpha_{\circ}) \in \argmin\limits_{q\in Q, \upalpha(\cdot)\in \mathcal{A}}\; L(q,y^{*}_{\circ},\upalpha)\\
    & G(q_{\circ})\in K_{\upalpha_{\circ}},\quad y^{*}_{\circ}\in (K_{\upalpha_{\circ}})^{-}\; \text{ and }\; \langle y^{*}_{\circ},G(q_{\circ}) \rangle_{Y^{*},Y} = 0.
    \end{aligned}
\right.
\end{equation}
They are indeed analogue to \eqref{optimality conditions - 2}. 

\subsection{Existence and uniqueness}

Our  main result is a necessary and sufficient theorem for existence and uniqueness of a solution to  ergodic MFG system \eqref{eq: mfg - main}.

\begin{theorem}\label{thm: nec and suf - min}
Assuming (A), (B), (C) and (A*) hold true, the following statements are equivalent
\begin{enumerate}[label = (\Roman*)]
    \item The primal problem \eqref{eq: primal - min} admits a solution $(q_{\circ},\upalpha_{\circ})$, that is,
    \begin{equation*}
         (q_{\circ},\upalpha_{\circ}) \in \argmin\limits_{\substack{q\in\mathcal{M}_{d}^{+}(\mathds{R}^{m})\\ \upalpha(\cdot)\in\mathcal{A}}}\big\{\langle f(\cdot\,,\upalpha(\cdot),q),\,q\,\rangle\, ,\;\text{s.t.: } 1-\langle 1,q\rangle = 0 \text{ and } q\in \text{Ker}(\mathcal{L}^{*}_{\upalpha})\big\}.
    \end{equation*}
    \item{
        There exist $(c_{\circ},u_{\circ},q_{\circ})\in\mathds{R}\times W^{r,2}_{\text{loc}}(\mathds{R}^{m})\times W^{s,1}_{\text{loc}}(\mathds{R}^{m})$ for any $r\geq 1$, $s>m$ and  a measurable function $\upalpha_{\circ}(\cdot):\mathds{R}^{m}\to A$, 
        solving the MFG system
        \begin{equation}\label{eq: pde system - thm}
        \left\{
            \begin{aligned}
            & \quad \min\limits_{\alpha\in A}\{ -\text{trace}\big( a(x,\alpha)D^{2} u_{\circ}(x)\big) - b(x,\alpha)\cdot\nabla u_{\circ}(x) +  f(x,\alpha,q_{\circ}) \} = c_{\circ} \\
            & - \text{trace}\big(D^{2}(a(x,\upalpha_{\circ}(x))q_{\circ}(x))\big) + \text{div}\big(b(x,\upalpha_{\circ})q_{\circ}(x)\big) = 0, \quad \quad  \text{a.e. in } \mathds{R}^{m}
            \end{aligned}
        \right.
        \end{equation}
        and moreover
        \begin{enumerate}[label = (\alph*)]
            \item the constant $c_{\circ}$ is defined by $c_{\circ}=\langle f(\cdot\,,\upalpha_{\circ}(\cdot),q_{\circ}),q_{\circ} \rangle$,
            \item $u_{\circ}(\cdot)$ satisfies: $|u_{\circ}(x)|\leq K(1+|x|^{\kappa})$, with $\kappa = d+1-\theta$ and $K>0$ a constant,
            \item $q_{\circ}(\cdot)$ is the density of a probability measure, absolutely continuous w.r.t. Lebesgue,
            \item $\upalpha_{\circ}(\cdot)$ satisfies $\upalpha_{\circ}(x)\in \argmin\limits_{\alpha\in A}\{-\mathcal{L}_{\alpha}u_{\circ}(x) + f(x,\alpha,q_{\circ})\}$ a.e. $x\in\mathds{R}^{m}$.
        \end{enumerate}
    }
\end{enumerate}
If in addition $(q_{\circ},\upalpha_{\circ})$ in (I) is unique and  the vector field $b$ is locally Lipschitz continuous in $x$ with $\theta=1$ in (A6), then $u_{\circ}(\cdot)$ is unique in $W^{r,2}_{loc}(\mathds{R}^{m})$ with $r>\frac{m}{2}$ for $c_{\circ}$ given in (II-a), that is, if $(c_{\circ},u_{1}(\cdot))$ and $(c_{\circ},u_{2}(\cdot))$ are two solutions as in (II), then $u_{1}(\cdot)-u_{2}(\cdot)$ is a constant.
\end{theorem}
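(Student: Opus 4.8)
I would prove the two implications separately; the substance is in $(\mathrm{I})\Rightarrow(\mathrm{II})$, which I plan to obtain from strong duality. Given a primal optimizer $(q_\circ,\upalpha_\circ)$ with $q_\circ=\mu_{\upalpha_\circ}$, I would pass to the frozen reformulation \eqref{eq: primal circ}, whose objective $q\mapsto\langle f(\cdot,\upalpha_\circ(\cdot),q_\circ),q\rangle$ is linear; together with $Q=\mathcal{M}^+_d(\mathds{R}^m)$ convex, $G$ affine and $K_{\upalpha_\circ}$ a closed convex cone, the frozen problem is convex in the sense of Definition \ref{def: convex}. The constraint qualification \eqref{eq: interior} is checked through Proposition \ref{prop: interior}: it reduces to \eqref{eq: interior 2}, and since $\mathrm{Ker}(\mathcal{L}^*_{\upalpha_\circ})$ is the line spanned by $\mu_{\upalpha_\circ}$ while $DG_1(q_\circ)[h]=-\langle 1,h\rangle$ is onto $Y_1=\mathds{R}$ along that line, $0$ is interior. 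Theorem \ref{thm: equivalence} then yields no duality gap and a dual optimizer $y^*_\circ=(c_\circ,-\mathcal{L}_{\upalpha_\circ}u_\circ)$ with $u_\circ\in D(\mathcal{L}_0)$. Decoding the optimality conditions \eqref{optimality conditions - main} via the Lagrangian of Lemma \ref{lem: main - min}: the inner minimization over $\upalpha$ (exchange property, Proposition \ref{prop: exchange prop}) gives (II-d); minimization over the cone $Q$ forces $H(x,\nabla u_\circ,D^2u_\circ,q_\circ)-c_\circ\ge 0$ a.e.\ with complementary slackness $\langle H(\cdot,q_\circ)-c_\circ,q_\circ\rangle=0$, which by strict positivity of $q_\circ$ (Theorem \ref{thm summary diff}) upgrades to the HJB equality $H(\cdot,q_\circ)=c_\circ$ a.e.; $G(q_\circ)\in K_{\upalpha_\circ}$ is the FPK equation with $\langle 1,q_\circ\rangle=1$; and the value identity is (II-a). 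The regularity (II-b)--(II-c) follows from $u_\circ\in\mathcal{X}$, Remark \ref{rmk: domain of L in sobolev} and Theorem \ref{thm summary diff}.

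For $(\mathrm{II})\Rightarrow(\mathrm{I})$, feasibility of $(q_\circ,\upalpha_\circ)$ is immediate from the FPK equation and (II-c). To prove optimality I would take any feasible competitor $(q,\upalpha)$ with $q=\mu_\upalpha$ and estimate
\[
\langle f(\cdot,\upalpha,q),q\rangle \;\ge\; \langle H(x,\nabla u_\circ,D^2u_\circ,q),q\rangle \;\ge\; c_\circ + \int_{\mathds{R}^m}\min_{\alpha\in A}\big[f(x,\alpha,q)-f(x,\alpha,q_\circ)\big]\,\mathrm{d}q(x),
\]
where the first inequality uses the definition of $H$ as a minimum together with the invariance $\int\mathcal{L}_\upalpha u_\circ\,\mathrm{d}\mu_\upalpha=0$ (Theorem \ref{thm summary diff}, valid since $u_\circ\in D(\mathcal{L}_0)=D(\mathcal{L}_\upalpha)$ by (A*)), and the second uses the pointwise HJB bound $-\mathcal{L}_\alpha u_\circ+f(x,\alpha,q_\circ)\ge c_\circ$. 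It then remains to show that the coupling integral is non-negative; this is exactly where the Fréchet differentiability (C1) and the monotonicity (C2) enter, to sign $\langle f(\cdot,\alpha,q)-f(\cdot,\alpha,q_\circ),q\rangle$, yielding $\langle f(\cdot,\upalpha,q),q\rangle\ge c_\circ=\langle f(\cdot,\upalpha_\circ,q_\circ),q_\circ\rangle$ and hence optimality of $(q_\circ,\upalpha_\circ)$.

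For the concluding uniqueness statement, I assume $(q_\circ,\upalpha_\circ)$ unique, $b$ locally Lipschitz in $x$ and $\theta=1$. If $u_1,u_2$ are two solutions as in (II) with the same $c_\circ$, then applying the already-proven implication $(\mathrm{II})\Rightarrow(\mathrm{I})$ to each shows their minimizing controls are primal-optimal, hence both equal $\upalpha_\circ$ a.e.; consequently each $u_j$ satisfies the linear identity $-\mathcal{L}_{\upalpha_\circ}u_j+f(\cdot,\upalpha_\circ,q_\circ)=c_\circ$, so $w:=u_1-u_2$ solves $\mathcal{L}_{\upalpha_\circ}w=0$ a.e. Since $w\in W^{r,2}_{\mathrm{loc}}(\mathds{R}^m)$ with $r>m/2$ and $|w(x)|\le K(1+|x|^\kappa)$ by (II-b), a Liouville-type property for the dissipative ergodic diffusion generated by $\mathcal{L}_{\upalpha_\circ}$---whose invariant measure has all moments by Lemma \ref{conv-prob law}, with (A4), $\theta=1$ and local Lipschitz $b$ ensuring recurrence---forces $w$ to be constant.

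The main obstacle throughout is the mean-field coupling: the dependence of $f$ on its measure argument renders the original primal \eqref{eq: primal - min} non-convex, so strong duality cannot be applied to it directly. This is resolved by freezing the coupling at $q_\circ$---lossless because the feasible set is the singleton $\{\mu_{\upalpha_\circ}\}$---for the construction in $(\mathrm{I})\Rightarrow(\mathrm{II})$, while the monotonicity (C2), tailored to replace the Lasry--Lions condition, is what closes the verification inequality in $(\mathrm{II})\Rightarrow(\mathrm{I})$. I expect the most delicate point to be precisely the sign of the coupling integral under (C1)--(C2), and, secondarily, the justification of the pairing $\int\mathcal{L}_\upalpha u_\circ\,\mathrm{d}\mu_\upalpha=0$ for $u_\circ$ of polynomial growth, which rests on the moment bounds of Lemma \ref{conv-prob law}.
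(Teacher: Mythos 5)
Your outline reproduces the paper's Step 1 faithfully (frozen problem \eqref{eq: primal circ}, convexity, constraint qualification via Proposition \ref{prop: interior}, strong duality from Theorem \ref{thm: equivalence}), but the forward direction then breaks exactly where the paper does its hardest work. After the exchange property, the condition you must decode is $q_{\circ}\in \argmin_{q\in Q}\,\langle -\mathcal{L}_{\upalpha_{\circ}}u_{\circ} + f(\cdot\,,\upalpha_{\circ}(\cdot),q) - c_{\circ}\,,\,q\rangle$, i.e. \eqref{eq: min problem measure - proof}, in which $f$ depends on the \emph{running} measure $q$. This functional is not linear in $q$, so your step ``minimization over the cone $Q$ forces $H(\cdot,q_{\circ})-c_{\circ}\geq 0$ a.e.\ with complementary slackness'' is not available: the first-order condition on the cone $\mathcal{M}^{+}_{d}(\mathds{R}^{m})$ (via the tangent-cone results of \S\ref{sec: opt space measure}) reads $\langle \widetilde{f}(\cdot\,,q_{\circ})+g\,,\,h\rangle + \langle D_{\mu}\widetilde{f}(\cdot\,,q_{\circ})[h]\,,\,q_{\circ}\rangle \geq 0$ and carries the extra Fréchet term, which the paper kills using (C2) --- Dirac perturbations $h=\delta_{\overline{x}}$ for the inequality $\geq 0$ (Step 2.2), and a subtract-$n\Gamma$ trick exploiting that all feasible measures are probabilities for the reverse inequality $q_{\circ}$-a.e.\ (Step 2.3). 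Since you explicitly defer (C1)--(C2) to the converse direction, the HJB equality in (II) is unproved in your scheme. (If you instead meant the fully frozen linear objective $\langle f(\cdot\,,\upalpha_{\circ}(\cdot),q_{\circ}),q\rangle$, the cone argument does work, but it only yields the equation along the fixed control $\upalpha_{\circ}$; the joint minimization \eqref{eq: min problem measure - control - proof}, which gives (II-d) and the min over $\alpha$, again comes from the unfrozen Lagrangian.)

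The converse via verification also does not close. You need $\int \min_{\alpha\in A}\big[f(x,\alpha,q)-f(x,\alpha,q_{\circ})\big]\,\mathrm{d}q(x)\geq 0$, but (C2) only signs $\langle D_{\mu}f(\cdot\,,\alpha,\mu)[h],\mu\rangle\leq 0$ for \emph{nonnegative} directions $h$ paired with the \emph{base} measure $\mu$; here the direction $q-q_{\circ}$ is signed and the pairing measure is $q$. In the paper's own admissible examples ($f=k\ast\mu$ with $k\leq 0$, which satisfies (C2)) the difference $f(x,\alpha,q)-f(x,\alpha,q_{\circ})$ has no sign, and if anything (C2) --- ``positive perturbations decrease $f$ in expectation'' --- pushes the wrong way. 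The paper never runs a verification argument: both implications come from the iff in Theorem \ref{thm: equivalence}, the sufficiency of the Fenchel--Young conditions \eqref{optimality conditions - main} holding by weak duality without any convexity in $q$. Finally, your uniqueness step reduces to the linear equation $\mathcal{L}_{\upalpha_{\circ}}w=0$ only after arguing the two solutions share the same control, which you extract from primal uniqueness through the (flawed) converse, and your Liouville appeal for the ``dissipative ergodic diffusion'' is left unverified; the paper needs neither: the elementary inequality $\min(A-B)\leq \min A - \min B$ gives $\min_{\alpha\in A}\{-\mathcal{L}_{\alpha}(u_{1}-u_{2})\}\leq 0$ directly, and the Liouville result of \cite{bardi2016liouville} applies with the explicit supersolution $\psi(x)=|x|^{d}\log(|x|)$ outside a ball.
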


\begin{remark}\label{rmk: regularity}  In fact $u_{\circ}(\cdot)$ is an $L$-viscosity solution (see e.g. \cite{caffarelli1996viscosity, crandall1996equivalence}), which is as expected as when we consider $C$-viscosity solutions for the continuous  case. Recall in our setting, the vector field $b$ and the function $f$ are merely measurable  in $x$.
\end{remark}

\begin{remark}\label{rem: uniq} Some observations on uniqueness of the solution:\\
\textbullet\quad Uniqueness of $(q_{\circ},\upalpha_{\circ})$ in statement (I) requires the (primal) optimization problem to be jointly convex in $(q,\alpha)$. This is hardly satisfied because of the constraint $q\in Ker(\mathcal{L}^{*}_{\alpha})$. Therefore, one does not expect uniqueness for the MFG system.\\
\textbullet\quad The constant $c_{\circ}$ is in general not unique. In fact, there might be infinitely many constants for which there exists a solution $(u,q)$. See \cite[Remark 4.7]{kouhkouh1} and \cite{ichihara2011recurrence, kaise2006structure}.
\end{remark}

Note that by the latter theorem, we reduced the problem of existence of a solution $(c,u,q)$ for the MFG system \eqref{eq: mfg - main} to the solvability of an (infinite dimensional) optimization problem where the unknown is $(q,\upalpha)$. 

A direct consequence of Theorem \ref{thm: nec and suf - min} and Lemma \ref{lem: existence primal} is the following.

\begin{corollary}\label{cor: main 2}
    The ergodic problem \eqref{eq: mfg - main}  (equivalently, the MFG system \eqref{eq: pde system - thm}) admits a solution as in the statement (II) of Theorem \ref{thm: nec and suf - min}.
\end{corollary}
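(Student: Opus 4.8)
The plan is to prove the equivalence (I)$\Leftrightarrow$(II) by exploiting the parametric duality assembled in \S\ref{sec: primal prob}, and then to read off Corollary~\ref{cor: main 2} by combining the implication (I)$\Rightarrow$(II) with the unconditional existence of a primal minimiser from Lemma~\ref{lem: existence primal}. The structural observation driving everything is that, for each fixed control, the feasible set $\{q\in Q:\,G(q)\in K_{\upalpha}\}$ reduces to the singleton $\{\mu_{\upalpha}\}$, so that the primal \eqref{eq: primal - min 2} is really the scalar minimisation of $F$ in \eqref{functional F}, while its frozen surrogate \eqref{eq: primal circ}, in which the coupling is frozen at $q_{\circ}=\mu_{\upalpha_{\circ}}$, is \emph{linear} in $q$ and hence convex. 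I would use \eqref{eq: primal circ} to manufacture the PDE system and the sufficient optimality conditions \eqref{optimality conditions - main} to argue in reverse.

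For (I)$\Rightarrow$(II): starting from a primal optimiser $(q_{\circ},\upalpha_{\circ})$, freeze the coupling and pass to the convex problem \eqref{eq: primal circ}, whose value coincides with $F(\upalpha_{\circ})$ and hence with the value of \eqref{eq: primal - min 2}. After checking Robinson's constraint qualification \eqref{eq: interior 2} via Proposition~\ref{prop: interior} --- the genuinely infinite-dimensional verification --- Theorem~\ref{thm: equivalence} gives no duality gap and a dual optimiser $y^{*}_{\circ}=(c_{\circ},-\mathcal{L}_{\upalpha_{\circ}}u_{\circ})$ with $u_{\circ}\in\mathcal{X}$, in the explicit shape of Lemma~\ref{lem: main - min}; in particular $c_{\circ}=\langle f(\cdot,\upalpha_{\circ},q_{\circ}),q_{\circ}\rangle$, which is (a). Absence of a gap forces $\inf_{q\in Q}\langle H(\cdot,\nabla u_{\circ},D^{2}u_{\circ},q_{\circ})-c_{\circ},\,q\rangle=0$; since $Q$ is a cone this first yields the HJB inequality $H(\cdot,\nabla u_{\circ},D^{2}u_{\circ},q_{\circ})\ge c_{\circ}$ a.e., and then the identity $\langle f(\cdot,\upalpha_{\circ},q_{\circ})-\mathcal{L}_{\upalpha_{\circ}}u_{\circ}-c_{\circ},\,q_{\circ}\rangle=0$ (using $\langle\mathcal{L}_{\upalpha_{\circ}}u_{\circ},q_{\circ}\rangle=0$ from \eqref{eq: semigroup invariance} and (a)), together with $H\le f(\cdot,\upalpha_{\circ},q_{\circ})-\mathcal{L}_{\upalpha_{\circ}}u_{\circ}$ and the strict positivity of $q_{\circ}$ from \ref{thm regularity meas}, simultaneously upgrades this to the pointwise equality in \eqref{eq: pde system - thm} and yields (d). The Fokker--Planck equation is the membership $q_{\circ}\in\mathrm{Ker}(\mathcal{L}^{*}_{\upalpha_{\circ}})$; the regularity $u_{\circ}\in W^{r,2}_{\mathrm{loc}}$ and $q_{\circ}\in W^{s,1}_{\mathrm{loc}}$, $s>m$ (giving (c)), comes from Theorem~\ref{thm sobolev domain extension} with Remark~\ref{rmk: domain of L in sobolev} and from \ref{thm regularity meas}, and the growth bound (b) is built into $\mathcal{X}$.

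For (II)$\Rightarrow$(I): I would verify that the triple from an MFG solution, with $y^{*}_{\circ}=(c_{\circ},-\mathcal{L}_{\upalpha_{\circ}}u_{\circ})$, meets the sufficient conditions \eqref{optimality conditions - main}, which certify that $(q_{\circ},\upalpha_{\circ})$ minimises the primal. The cone and complementarity conditions $G(q_{\circ})\in K_{\upalpha_{\circ}}$, $y^{*}_{\circ}\in (K_{\upalpha_{\circ}})^{-}$ and $\langle y^{*}_{\circ},G(q_{\circ})\rangle=0$ follow from the Fokker--Planck equation and the normality computation in Lemma~\ref{lem: main - min}. The substantive content is the $\argmin$ condition, i.e. global optimality of $(q_{\circ},\upalpha_{\circ})$ for the \emph{non-convex} primal: for a competitor $\upbeta$ with invariant measure $\mu_{\upbeta}$, integrating the HJB inequality $f(x,\upbeta(x),q_{\circ})-\mathcal{L}_{\upbeta}u_{\circ}(x)\ge c_{\circ}$ against $\mu_{\upbeta}$ and using $\langle\mathcal{L}_{\upbeta}u_{\circ},\mu_{\upbeta}\rangle=0$ gives $\langle f(\cdot,\upbeta,q_{\circ}),\mu_{\upbeta}\rangle\ge c_{\circ}=F(\upalpha_{\circ})$, so that it remains to absorb the coupling discrepancy $\langle f(\cdot,\upbeta,\mu_{\upbeta})-f(\cdot,\upbeta,q_{\circ}),\mu_{\upbeta}\rangle$. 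This is exactly where the monotonicity (C2) --- an aggregation-type condition opposite in spirit to the Lasry--Lions inequality \eqref{monotinicity LL} --- must be invoked, and I expect this to be the main obstacle: because the natural increment $\mu_{\upbeta}-q_{\circ}$ is signed while (C2) is stated for non-negative directions, the estimate has to be run along the segment joining $q_{\circ}$ to $\mu_{\upbeta}$ and must fully exploit that both are probability measures and that $\mu_{\upbeta}$ is not free but slaved to $\upbeta$ through the Fokker--Planck constraint.

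Finally, for the uniqueness addendum I would use that, when $(q_{\circ},\upalpha_{\circ})$ is unique, any two solutions $(c_{\circ},u_{1})$ and $(c_{\circ},u_{2})$ share the coupling $q_{\circ}$ and the optimal control $\upalpha_{\circ}$, so that both satisfy $-\mathcal{L}_{\upalpha_{\circ}}u_{i}=c_{\circ}-f(\cdot,\upalpha_{\circ},q_{\circ})$ with identical right-hand side; hence $w=u_{1}-u_{2}$ solves $\mathcal{L}_{\upalpha_{\circ}}w=0$ in $W^{r,2}_{\mathrm{loc}}$, $r>m/2$, with at most polynomial growth. With $b$ locally Lipschitz and $\theta=1$ the operator $\mathcal{L}_{\upalpha_{\circ}}$ is a recurrent, uniformly elliptic ergodic generator, and a Liouville-type argument --- controlling the growth of $w$ against the invariant measure, whose moments are finite by Lemma~\ref{conv-prob law} --- forces $w$ to be constant. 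Corollary~\ref{cor: main 2} is then immediate: Lemma~\ref{lem: existence primal} asserts (I) unconditionally, and (I)$\Rightarrow$(II) produces a solution of \eqref{eq: pde system - thm} in the sense of (II). The recurring technical care throughout is the legitimacy of the pairing $\langle\mathcal{L}_{\upalpha}u,\mu_{\upalpha}\rangle=0$ for the unbounded generator acting on the polynomially growing $u\in\mathcal{X}$, which is supplied by the closed-extension framework of Theorem~\ref{thm summary diff} together with the moment bounds of Lemma~\ref{conv-prob law}.
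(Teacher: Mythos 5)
Your final reduction of the corollary is exactly the paper's: Lemma~\ref{lem: existence primal} supplies a primal minimiser unconditionally, and the implication (I)$\Rightarrow$(II) of Theorem~\ref{thm: nec and suf - min} converts it into a solution of \eqref{eq: pde system - thm}. Had you cited that implication as a black box, nothing more would need to be said. Since you instead re-derive it, the derivation has to be examined, and it contains a genuine gap at the step where you extract the HJB inequality.

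You claim that absence of a duality gap forces $\inf_{q\in Q}\langle H(\cdot,\nabla u_{\circ},D^{2}u_{\circ},q_{\circ})-c_{\circ},q\rangle=0$ and that, $Q$ being a cone, this yields $H(\cdot,q_{\circ})\ge c_{\circ}$ a.e. This conflates two different problems. The genuine dual objective of Lemma~\ref{lem: main - min} is $\inf_{q\in Q}\langle H(\cdot,q)-c_{\circ},q\rangle$ with the \emph{running} measure $q$ inside $H$ (through $f(x,\alpha,q)$); this functional is not positively homogeneous in $q$, so the cone argument does not apply to it. If instead you apply duality to the frozen linear problem \eqref{eq: primal circ}, the cone argument is legitimate but delivers only $f(\cdot,\upalpha_{\circ},q_{\circ})-\mathcal{L}_{\upalpha_{\circ}}u_{\circ}\ge c_{\circ}$, i.e.\ the inequality for the \emph{fixed} control $\upalpha_{\circ}$; since $H\le f(\cdot,\upalpha_{\circ},q_{\circ})-\mathcal{L}_{\upalpha_{\circ}}u_{\circ}$ pointwise by definition of the minimum, this is the wrong side for concluding $H\ge c_{\circ}$, and item (II-d) does not follow. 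This is precisely where the paper's proof does real work: it passes from the no-duality-gap identity to the joint argmin \eqref{eq: min problem measure - control - proof} via the exchange property of Proposition~\ref{prop: exchange prop}, obtains (II-d) by specialising $q=q_{\circ}$, and then treats the residual minimisation \eqref{eq: min problem measure - proof} --- still nonlinear in $q$ because $f$ depends on $q$ --- with the first-order conditions in measure spaces (Theorems~\ref{thm: nec cond measure opt} and~\ref{thm: tangent set measure}), testing with Dirac masses $h=\delta_{\overline{x}}$ and invoking the monotonicity (C2) to discard the term $\langle D_{\mu}\widetilde{f}(\cdot\,,q_{\circ})[\delta_{\overline{x}}],q_{\circ}\rangle$, plus a separate shifting argument ($f\mapsto f-n\Gamma$) for the reverse inequality. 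None of this machinery appears in your forward direction; (C2) surfaces only in your (II)$\Rightarrow$(I) sketch, which you leave unresolved but which is not needed for the corollary. To repair the proposal, either cite Theorem~\ref{thm: nec and suf - min} directly, or reinstate Steps 2.1--2.3 of its proof.
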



\begin{proof}[\textbf{Proof} of Theorem \ref{thm: nec and suf - min}]
The proof is a consequence of Theorem \ref{thm: equivalence} and Lemma \ref{lem: existence primal}, provided we express the optimality conditions \eqref{optimality conditions - main} in terms of a PDE system as in the statement (II). And to do so, we rely on Lemma \ref{lem: main - min} and on the results in \S\ref{sec: opt space measure}. 

But before we go any further, let $(q_{\circ},\upalpha_{\circ})=(\mu_{\upalpha_{\circ}},\upalpha_{\circ})$ be an optimal solution for \eqref{eq: primal sharp} as given by Lemma \ref{lem: existence primal}, and let us consider the \textit{primal} problem in its formulation
\begin{equation}
    \label{eq: primal circ proof}
    \tag{$\mathfrak{P}_{_{_{\!\!\circ}}}$}
    \min\limits_{q\in Q} \,\langle f(\cdot\,,\upalpha_{\circ}(\cdot), \mu_{\upalpha_{\circ}}),q \rangle, \quad \text{ s.t.: } \quad G(q) \in K_{\upalpha_{\circ}}.
\end{equation}

\textit{Step 1.  (On the optimization problems)}\\
We need to check if the assumptions of Theorem \ref{thm: equivalence} are satisfied by \eqref{eq: primal circ proof}. The objective function $q\mapsto \langle f(\cdot\,\upalpha_{\circ}(\cdot),\mu_{\upalpha_{\circ}}),q \rangle$ is linear hence convex and continuous, the set $Q=\mathcal{M}^{+}_{d}(\mathds{R}^{m})$ is clearly convex and close, the function $G(q) = (G_{1}(q),G_{2}(q))$, with $G_{1}(q) = 1-\langle 1,q\rangle$ and $G_{2}(q)=q$, is continuously differentiable and convex w.r.t. the set $-K$ (this is easy to check as $G$ is affine). The last assumption we need is \eqref{eq: interior} which is in our situation equivalent to \eqref{eq: interior 2} as shown by Proposition \ref{prop: interior}. Let $q$ be a feasible point and recall the notation in \S \ref{sec: primal prob}. Using the results in \S \ref{sec: diffusion}, in particular (\ref{thm existence inv meas}) and (\ref{thm regularity meas}) in Theorem \ref{thm summary diff}, we have $K_{2}(\upalpha_{\circ})= \text{Ker}(\mathcal{L}^{*}_{\upalpha_{\circ}}) = \{h\,:\, h= \lambda \mu_{\upalpha_{\circ}},\,\lambda \geq 0\}$. We can then write 
\begin{equation*}
\begin{aligned}
    & G_{1}(q) +DG_{1}(q)[K_{2} - q] - K_{1}\\
    & \quad \quad \quad = 1- \langle 1, q\rangle + \{\,-\langle 1, h - q \rangle \;:\; \forall\, h\in K_{2}(\upalpha_{\circ})\}\\
    & \quad \quad \quad = 1 - \{\,\lambda\,\langle 1, \mu_{\upalpha_{\circ}}  \rangle \;:\; \forall\,\lambda\geq 0\} = (-\infty,1]
\end{aligned}
\end{equation*}
where in the last equality we used the fact that $\mu_{\upalpha_{\circ}}$ is a probability measure hence $\langle 1,\mu_{\upalpha_{\circ}}\rangle = 1$. Therefore $0\in \text{int}\{G_{1}(q) +DG_{1}(q)[K_{2} - q] - K_{1}\}$ and we can indeed apply Theorem \ref{thm: equivalence} since Lemma \ref{lem: existence primal} ensures that the primal problem has a solution and hence a finite value. In particular: 
\begin{itemize}
    \item the first part of Theorem \ref{thm: equivalence} ensures $(i)$ no duality gap between the primal and dual problems and, $(ii)$ the primal problem admits a solution if and only there exists an element in the dual space $Y^{*}$ satisfying the optimality conditions \eqref{optimality conditions - main}, then
    \item the second part of Theorem \ref{thm: equivalence} together with Lemma \ref{lem: existence primal} ensure that the elements of the dual space $Y^{*}$ satisfying the optimality conditions \eqref{optimality conditions - main} are optimal for the dual problem. 
\end{itemize}
Next, we need to translate these optimality conditions into a PDE. 

\textit{Step 2. (On the optimality conditions \eqref{optimality conditions - main})}\\
Let us denote by $y^{*}_{\circ}\in Y^{*}$  an optimal solution of the \textit{dual} problem \eqref{eq: dual - min}. Following Lemma \ref{lem: main - min} (see also its proof), one can substitute the dual variables $y^{*}$ with the pairs of variables $(c,u)\in\mathds{R}\times \mathcal{X}$ where $\mathcal{X}$ is as defined in \eqref{eq: functional space X - main}. And the optimal dual variables are given by $y^{*}_{\circ}= (c_{\circ}, - \mathcal{L}_{\upalpha_{\circ}}u_{\circ})$.

Now, the no-\textit{duality gap} yields
\begin{equation}\label{eq: no duality gap - proof}
    c_{\circ} + \;\inf\limits_{q \in Q}\left\{\,\langle H(x,\nabla u_{\circ}, D^{2}u_{\circ},q) - c_{\circ} , q \rangle\,\right\} = \langle f(\cdot\,, \upalpha_{\circ}(\cdot),q_{\circ}), q_{\circ} \rangle
\end{equation}
and the last condition in \eqref{optimality conditions - main} that is $\langle y_{\circ}^{*},G(q_{\circ}) \rangle_{Y^{*},Y} = 0$, can be expressed as 
\begin{equation*}
    c_{\circ}(1-\langle 1, q_{\circ}\rangle) + \langle -\mathcal{L}_{\upalpha_{\circ}}u_{\circ}(\cdot) , q_{\circ} \rangle = 0,
\end{equation*}
i.e. $c_{\circ} = \langle c_{\circ}, q_{\circ}\rangle - \langle -\mathcal{L}_{\upalpha_{\circ}}u_{\circ}(\cdot) , q_{\circ} \rangle$. Substituting $c_{\circ}$ in \eqref{eq: no duality gap - proof} yields
\begin{equation}\label{eq: 1 - proof}
    \inf\limits_{q \in Q}\,\langle H(x,\nabla u_{\circ}, D^{2}u_{\circ},q) - c_{\circ} , q \rangle\, = \langle - \mathcal{L}_{\upalpha_{\circ}}u_{\circ}(\cdot) + f(\cdot\,,\upalpha_{\circ}(\cdot),q_{\circ}) - c_{\circ} , q_{\circ}\rangle.
\end{equation}
Thanks to the \textit{exchange property} \eqref{eq: exchange prop} and recalling the definition of the Hamiltonian \eqref{eq: Ham}, the latter equality becomes
\begin{equation}\label{eq: 2 - proof}
    \inf\limits_{q \in Q}\min\limits_{\upalpha(\cdot)\in\mathcal{A}}\,\langle -\mathcal{L}_{\upalpha}u_{\circ}(\cdot) + f(\cdot\,,\upalpha(\cdot), q) - c_{\circ} , q \rangle\, = \langle - \mathcal{L}_{\upalpha_{\circ}}u_{\circ}(\cdot) + f(\cdot\,,\upalpha_{\circ}(\cdot),q_{\circ}) - c_{\circ} , q_{\circ}\rangle,
\end{equation}
that is
\begin{equation}\label{eq: min problem measure - control - proof}
    (q_{\circ},\upalpha_{\circ}) \in \argmin\limits_{\substack{q\in Q\\ \upalpha(\cdot)\in\mathcal{A}}}\,\big\{\;\langle -\mathcal{L}_{\upalpha}u_{\circ}(\cdot) + f(\cdot\,,\upalpha(\cdot), q) - c_{\circ} , q \rangle\;\big\}.
\end{equation}
In particular, when setting $q$ to its optimal value $q_{\circ}$, one has
\begin{equation}
    \upalpha_{\circ}(\cdot) \in \argmin\limits_{\upalpha(\cdot)\in \mathcal{A}}\,\big\{\;\langle -\mathcal{L}_{\upalpha}u_{\circ}(\cdot) + f(\cdot\,,\upalpha(\cdot), q_{\circ}) - c_{\circ} , q_{\circ} \rangle\;\big\}
\end{equation}
which yields thanks to the \textit{exchange property} \eqref{eq: exchange prop}
\begin{equation}\label{eq: charact upalpha - proof}
    \upalpha_{\circ}(x) \in \argmin\limits_{\alpha \in A}\,\big\{\, -\mathcal{L}_{\alpha}u_{\circ}(x) + f(x,\alpha, q_{\circ})\;\big\},\quad q_{\circ}-\text{a.e. } x\in \mathds{R}^{m},
\end{equation}
i.e. $H(x,\nabla u_{\circ}(x),D^{2}u_{\circ}(x),q_{\circ}) = -\mathcal{L}_{\upalpha_{\circ}}u_{\circ}(x) + f(x,\upalpha_{\circ}(x), q_{\circ})$, $q_{\circ}$-almost everywhere. And thanks to (\ref{thm regularity meas}) in Theorem \ref{thm summary diff}, $q_{\circ}$ is absolutely continuous with respect to Lebesgue measure and hence the result almost everywhere in $\mathds{R}^{m}$.

Analogously, when setting $\upalpha(\cdot)$ to its optimal value $\upalpha_{\circ}(\cdot)$ in \eqref{eq: min problem measure - control - proof}, one has 
\begin{equation}\label{eq: min problem measure - proof}
    q_{\circ} \in \argmin\limits_{q\in Q}\,\big\{\;\langle -\mathcal{L}_{\upalpha_{\circ}}u_{\circ}(\cdot) + f(\cdot\,,\upalpha_{\circ}(\cdot), q) - c_{\circ} , q \rangle\;\big\}.
\end{equation}
And recalling the definition of the \textit{primal} problem \eqref{eq: primal - min}, the condition $G(q_{\circ})\in K_{\upalpha_{\circ}}$ in  \eqref{optimality conditions - main} means in particular that $\langle 1,q_{\circ} \rangle=1$, and since $q\in Q = \mathcal{M}_{d}^{+}(\mathds{R}^{m})$, then $q_{\circ}$ is a probability measure.

We will now show (using the results in \S \ref{sec: opt space measure}) that an optimality condition for the optimization problem \eqref{eq: min problem measure - proof} allows to prove that $(c_{\circ},u_{\circ})$ solves the PDE $- \mathcal{L}_{\upalpha_{\circ}}u_{\circ} + f(\cdot\,,\upalpha_{\circ}(\cdot),q_{\circ}) = c_{\circ}$ a.e. in $\mathds{R}^{m}$, i.e. $H(x,\nabla u_{\circ}(x),D^{2}u_{\circ}(x),q_{\circ}) = c_{\circ}$ a.e. in $\mathds{R}^{m}$.

\textit{Step 2.1. (On the problem \eqref{eq: min problem measure - proof})}\\
We define $\widetilde{f}:\mathds{R}^{m}\times \mathcal{M}(\mathds{R}^{m})\to \mathds{R}\,$ and $g:\mathds{R}^{m}\to \mathds{R}$ respectively by
\begin{equation*}
    \widetilde{f}(x,q)\coloneqq f(x,\upalpha_{\circ}(x),q),\quad \quad g(x) \coloneqq  - \mathcal{L}_{\upalpha_{\circ}}u_{\circ}(x) -c_{\circ}
\end{equation*}
and we set
\begin{equation*}
    \Psi(q) \coloneqq \langle\; \widetilde{f}(\,\cdot\,,q) + g(\cdot) \,,\, q\;\rangle.
\end{equation*}
The optimization problem \eqref{eq: min problem measure - proof} writes equivalently as
\begin{equation}\label{eq: min problem measure - proof 2}
    \min\;\big\{\; \Psi(q)\,,\quad \text{s.t.: }\; q\in \mathcal{M}^{+}_{d}(\mathds{R}^{m})\;\big\}.
\end{equation}
With this formulation, it is easy to see that any measure $q$ satisfying the constraint in \eqref{eq: min problem measure - proof 2} is \textit{regular} in the sense of Definition \ref{def: regular measure}. Indeed, it suffices to set, in the notation of \eqref{eq: regular measure}, $Q=\mathcal{M}_{d}^{+}(\mathds{R}^{m})$, $G(q)=q$ and $K=\mathcal{M}^{+}(\mathds{R}^{m})$. Thanks to assumption (C1), the function $\Psi$ is Fréchet differentiable and we can apply Theorem \ref{thm: nec cond measure opt} together with Theorem \ref{thm: tangent set measure} and Corollary \ref{cor: tangent set measure} to obtain the following first-order necessary condition for $q_{\circ}$ to be a minimum of \eqref{eq: min problem measure - proof 2} (or equivalently of \eqref{eq: min problem measure - proof}): 
\begin{equation}\label{eq: nec cond measure - proof}
    D\Psi(q_{\circ})[h] \geq 0,\quad \forall\, h\in\{\,h\in\mathcal{M}_{d}(\mathds{R}^{m})\,:\, h^{-}\ll q_{\circ}\},
\end{equation}
where, using the definition of $\Psi$, one has
\begin{equation*}
        D\Psi(q_{\circ})[h] = \langle \widetilde{f}(\cdot\,,q_{\circ}) + g(\cdot)\,,\, h \rangle + \langle D_{\mu}\widetilde{f}(\cdot\,,\,q_{\circ})[h], q_{\circ} \rangle.
\end{equation*}

\textit{Step 2.2. (We show that $\widetilde{f}(\cdot\,,q_{\circ}) + g(\cdot)\geq 0$ in $\mathds{R}^{m}$)}\\
We proceed by contradiction. Suppose $\exists\,\overline{x}\in\mathds{R}^{m}$ such that $\widetilde{f}(\overline{x},q_{\circ}) + g(\overline{x}) <0$.\\
We choose $h=\delta_{\overline{x}}$, the Dirac measure with unit mass concentrated at $\overline{x}$. It is a positive measure and is clearly in $T_{\mathcal{M}^{+}_{d}(\mathds{R}^{m})}(q_{\circ})$. When used in \eqref{eq: nec cond measure - proof}, one gets
\begin{equation*}
    \begin{aligned}
        0 & \;\leq\quad  \langle \widetilde{f}(\cdot\,,q_{\circ}) + g(\cdot)\,,\, \delta_{\overline{x}} \rangle + \langle D_{\mu}\widetilde{f}(\cdot\,,\,q_{\circ})[\delta_{\overline{x}}], q_{\circ} \rangle\\
        & \;\leq\quad  \widetilde{f}(\overline{x},q_{\circ}) + g(\overline{x}) + \langle D_{\mu}\widetilde{f}(\cdot\,,\,q_{\circ})[\delta_{\overline{x}}], q_{\circ} \rangle
    \end{aligned}
\end{equation*}
But using assumption (C2), we have $\langle D_{\mu}\widetilde{f}(\cdot\,,\,q_{\circ})[\delta_{\overline{x}}], q_{\circ} \rangle\leq 0$ and this yields a contradiction with $\widetilde{f}(\overline{x},q_{\circ}) + g(\overline{x})<0$. Hence, the function $\widetilde{f}(\cdot\,,q_{\circ}) + g(\cdot)$ is non-negative for all $x\in\mathds{R}^{m}$.

\textit{Step 2.3. (We show that $\widetilde{f}(x,q_{\circ}) + g(x)= 0$ almost everywhere in $\mathds{R}^{m}$)}\\
We proceed by contradiction. Suppose there exists a Borel subset $B$ (open set in $\mathds{R}^{m}$) such that $q_{\circ}(B)\neq 0$ and a constant $\Gamma>0$, such that
\begin{equation*}
    \Gamma := q_{\circ}-\text{ess}\sup\limits_{x\in B}\{\; \widetilde{f}(x,q_{\circ}) + g(x) \;\} = \inf\{\gamma\in\mathds{R}\,: \; \widetilde{f}(x,q_{\circ}) + g(x) \leq \gamma,\; q_{\circ}-\text{a.e. in } B\}.
\end{equation*}

We will first show that the pair $(q_{\circ},\upalpha_{\circ})$ in the problem \eqref{eq: min problem measure - control - proof} remains the same when we subtract to $f(\cdot\,,\upalpha(\cdot),q)$ a positive constant. Then we will show that $\Gamma$ cannot be positive, which together with the previous \textit{Step 2.2} yields the desired result. 

Observe that $(q_{\circ},\upalpha_{\circ})$ besides being a minimizer for the problem \eqref{eq: min problem measure - control - proof}, it is determined by the optimality conditions \eqref{optimality conditions - main}. In particular, it is a minimizer for the \textit{primal} problem \eqref{eq: primal - min}. Therefore, we start from the latter problem \eqref{eq: primal - min} where we will subtract to $f$ a constant $n\Gamma$ where $n\geq 1$ (although the choice of the constant here is not important, we keep considering $\Gamma$ as defined above to avoid introducing new constants). \\
Recall the \textit{primal} problem formulated as 
\begin{equation}
    \label{eq: primal sharp proof}
    \tag{$\mathfrak{P}_{_{_{\!\!\sharp}}}$}
    \min\limits_{\upalpha(\cdot)\in\mathcal{A}} \,\langle f(\cdot\,,\upalpha(\cdot), \mu_{\upalpha}),\mu_{\upalpha} \rangle.
\end{equation}
Subtracting a constant $n\Gamma$ to $f$ in \eqref{eq: primal sharp proof} yields the optimization problem
\begin{equation*}
    \min\limits_{\upalpha(\cdot)\in\mathcal{A}}\; \langle f(\cdot\,,\upalpha(\cdot),\mu_{\upalpha})-n\Gamma\,,\,\mu_{\upalpha} \rangle.
\end{equation*}
But $\mu_{\upalpha}$ being a probability measure, the latter can be written as
\begin{equation*}
    -n\Gamma \,+\,\min\limits_{\upalpha(\cdot)\in\mathcal{A}}\; \langle f(\cdot\,,\upalpha(\cdot),\mu_{\upalpha})\,,\,\mu_{\upalpha} \rangle.
\end{equation*}
And $(q_{\circ},\upalpha_{\circ}) = (\mu_{\upalpha_{\circ}},\upalpha_{\circ})$ is again a minimizer for the latter problem. In other words, subtracting a constant to $f$ in the objective function in \eqref{eq: primal - min} does not alter the optimality of the pair $(q_{\circ},\upalpha_{\circ})$. And ultimately the optimality conditions \eqref{optimality conditions - main} also remain the same. 

Therefore, one can still consider $(c_{\circ},u_{\circ},q_{\circ},\upalpha_{\circ})$ as in \eqref{eq: 2 - proof} even if we subtract to $f$ a constant $n\Gamma$, i.e.
\begin{equation*}
\begin{aligned}
   & \inf\limits_{q \in Q}\min\limits_{\upalpha(\cdot)\in\mathcal{A}}\left\{\,\langle -\mathcal{L}_{\upalpha}u_{\circ} + f(\cdot\,,\upalpha(\cdot), q)-n\Gamma - c_{\circ} , q \rangle\,\right\} \\
&\quad \quad \quad \quad= \langle - \mathcal{L}_{\upalpha_{\circ}}u_{\circ} + f(\cdot\,,\upalpha_{\circ}(\cdot),q_{\circ})-n\Gamma - c_{\circ} , q_{\circ}\rangle.
\end{aligned}
\end{equation*}
In particular, $q_{\circ}$ is again a minimizer as it is for the problem \eqref{eq: min problem measure - proof} but where we subtract to $f$ a constant, i.e.
\begin{equation*}
    q_{\circ} \in \argmin\limits_{q\in Q}\,\big\{\;\langle -\mathcal{L}_{\upalpha_{\circ}}u_{\circ} + f(\cdot\,,\upalpha_{\circ}(\cdot), q)-n\Gamma - c_{\circ} , q \rangle\;\big\}.
\end{equation*}
The latter can be written in the notations of \textit{Step 2.1}
\begin{equation}\label{eq: min problem measure - proof 3}
    \min\limits_{q}\;\big\{\; \langle\; \widetilde{f}(\cdot\,,q)-n\Gamma + g(\cdot) \,,\, q\;\rangle\,,\quad \text{s.t.: }\; q\in \mathcal{M}^{+}_{d}(\mathds{R}^{m})\;\big\}.
\end{equation}
The first-order necessary optimality conditions \eqref{eq: nec cond measure - proof} written for the latter problem \eqref{eq: min problem measure - proof 3} now yields
\begin{equation*}
        \langle \widetilde{f}(\cdot\,,q_{\circ})-n\Gamma + g(\cdot)\,,\, h \rangle + \langle D_{\mu}\widetilde{f}(\cdot\,,\,q_{\circ})[h], q_{\circ} \rangle  \geq 0,\quad \forall\, h\in\{\,h\in\mathcal{M}_{d}(\mathds{R}^{m})\,:\, h^{-}\ll q_{\circ}\}
\end{equation*}
Thanks to assumption (C2), the second term in the above inequality is non-positive when $h$ is non-negative. So it suffices to choose $h$ as a positive measure supported on the Borel subset $B$ that we have fixed in our hypothesis, and recalling the definition of $\Gamma$, one has $\widetilde{f}(\cdot\,,q_{\circ}) + g(\cdot)-n\Gamma<0 $ for $n$ sufficiently large ($n>1$ is indeed enough) which yields a contradiction. Hence there cannot be any Borel subset of non-zero measure in which $\widetilde{f}(\cdot\,,q_{\circ}) + g(\cdot)$ is positive, i.e. $\widetilde{f}(x,q_{\circ}) + g(x) \leq 0$ $q_{\circ}$-almost everywhere in $\mathds{R}^{m}$, and together with the conclusion of \textit{Step 2.2} we finally have $\widetilde{f}(x,q_{\circ}) + g(x) = 0$ $q_{\circ}$-almost everywhere in $\mathds{R}^{m}$. We conclude with (\ref{thm regularity meas}) in Theorem \ref{thm summary diff} which ensures that $q_{\circ}$ is absolutely continuous with respect to Lebesgue measure, and hence the desired result:
\begin{equation}\label{eq: hjb - proof}
    -\mathcal{L}_{\upalpha_{\circ}}u_{\circ}(x) + f(x,\upalpha_{\circ}(x),q_{\circ}) = c_{\circ},\quad \text{almost everywhere in } \mathds{R}^{m}
\end{equation}
that is, thanks to \eqref{eq: charact upalpha - proof},  $H(x,\nabla u_{\circ}(x),D^{2}u_{\circ}(x),q_{\circ})=c_{\circ}$ a.e. in $\mathds{R}^{m}$.

\textit{Step 2.4. (Conclusion)}\\
At this stage of the proof, we have shown that $(q_{\circ},\upalpha_{\circ})$ is an optimal solution of \eqref{eq: primal - min} if and only if there exists a pair $(c_{\circ},u_{\circ})\in\mathds{R}^{m}\times \mathcal{X}$ satisfying the optimality conditions \eqref{optimality conditions - main}. And the latter conditions yield the no-\textit{duality gap}, also the growth condition of the function $u_{\circ}$ is given by the definition of $\mathcal{X}$ as in \eqref{eq: functional space X - main} (i.e. the statement (II-b)), the properties of the measure $q_{\circ}$ are ensured by (\ref{thm regularity meas}) in Theorem \ref{thm summary diff} (i.e. the statement (II-c)) and we have the characterization \eqref{eq: charact upalpha - proof} of $\upalpha_{\circ}$ (i.e. the statement (II-d)) noting that $q_{\circ}$ is equivalent to Lebesgue measure. Finally, the equation \eqref{eq: hjb - proof} together with $q_{\circ}\in \text{Ker}(\mathcal{L}_{\upalpha_{\circ}}^{*})$ and \eqref{eq: charact upalpha - proof} yield the PDE system \eqref{eq: pde system - thm}, and $(u_{\circ},q_{\circ})$ being in $W^{r,2}_{\text{loc}}(\mathds{R}^{m})\times W^{s,1}_{\text{loc}}(\mathds{R}^{m})$, for  $r>\frac{m}{2}$ and $s>m$, is a direct consequence of (\ref{thm regularity meas}) in Theorem \ref{thm summary diff} and of Theorem \ref{thm sobolev domain extension} (see Remark \ref{rmk: domain of L in sobolev}). Substituting \eqref{eq: hjb - proof} in the equation \eqref{eq: no duality gap - proof} yields the characterization of the constant $c_{\circ} = \langle f(\cdot\,, \upalpha_{\circ}(\cdot),q_{\circ}), q_{\circ} \rangle$, hence the statement (II-a). 
We are therefore left with the proof of the last statement.

\textit{Step 3. (Uniqueness of $u_{\circ}$)}\\
Assume here the primal problem (statement (I) of the theorem) enjoys uniqueness.\\ 
To prove that $u_{\circ}(\cdot)$ is unique, we need to assume in addition that the vector field $b(x,\alpha)$ is locally Lipschitz continuous with at most a linear growth in $x$, uniformly in $\alpha$, i.e. $\theta = 1$ in (A6) and hence $\kappa = d$. We also need $r>\frac{m}{2}$ in order to ensure continuity of $u_{\circ}(\cdot)$ (see \cite[Remark 5]{kouhkouh1}). This setting will allow us to apply the Liouville type result in \cite{bardi2016liouville}.

Suppose $(c_{\circ},u_{1}(\cdot)), (c_{\circ},u_{2}(\cdot))$ are two solutions with a polynomial growth of order at most $d$. Then we have, using the inequality ``$\min(A-B)\leq \min(A) - \min(B)$"
\begin{equation*}
    \min\limits_{\alpha\in A}\{\,-\mathcal{L}_{\alpha}(u_{1}-u_{2})\,\} \leq \min\limits_{\alpha\in A}\{-\mathcal{L}_{\alpha}u_{1} + f(\cdot\,,\alpha,q_{\circ})\} - \min\limits_{\alpha\in A}\{-\mathcal{L}_{\alpha}u_{2} + f(\cdot\,,\alpha,q_{\circ})\}  = 0
\end{equation*}
Therefore uniqueness of a solution $(c_{\circ},u_{\circ}(\cdot))$ is reduced to proving that there cannot exist non-constant sub-solutions to the static HJB equation $\min\limits_{\alpha\in A}\{-\mathcal{L}_{\alpha}v \} = 0$, where $v\coloneqq u_{1}-u_{2}$ i.e. whether Liouville property holds for the latter static HJB equation. This is answered positively in \cite{bardi2016liouville} using the following\\
\textit{claim: there exist a function $\psi\in C^{\infty}(\mathds{R}^m)$ and $R_{o}>0$ such that }
\begin{gather*}
	\min\limits_{\alpha\in A}\{-\mathcal{L}_{\alpha}\psi(x)\} \geq 0 \quad \text{in }\, \overline{B(0,R_{o})}^{C},\quad \psi(x) \to +\infty\; \text{when }\, |x|\to +\infty\\
	\text{ and }\quad \lim\limits_{|x|\to +\infty}\,\frac{v(x)}{\psi(x)} = 0
\end{gather*}
Hence, a Liouville type result \cite[Theorem 2.1]{bardi2016liouville} ensures that $v=u_{1}-u_{2}\equiv \text{constant}$. To prove the claim, we check that $\psi(x) \coloneqq |x|^d \log(|x|)$ works. This is done in \cite{kouhkouhPhD}.
\end{proof}

\subsection{Some remarks and examples}\label{sec: rmk and ex}

We discuss in the following remark the use of the TV-norm as it is uncustomary in the mean-field games literature.

\begin{remark}\label{rmk: TV}
The Total-Variation norm --although it is somehow dictated by the results in \S \ref{sec: duality} and \S\ref{sec: opt space measure} since $(\mathcal{M}(\mathds{R}^{m}),\|\cdot\|_{TV})$ is a Banach space (see e.g. \cite[\S IV.2.16]{dunford1988linear})-- seems to be natural in regards to our primal problem \eqref{eq: primal - min} where the constraint $q\in \text{Ker}(\mathcal{L}_{\upalpha}^{*})$ is nothing but \eqref{equation mu_diff op} in \S \ref{sec: diffusion}, that is requiring $q$ to be an invariant (stationary) measure. Therefore, there is no idea of ``transportation" which the Wasserstein metric seems to capture the best. Roughly speaking, in optimal transport, one seeks a transport plan (unknown) such that for a given initial measure, its image with the transport plan matches a given target measure. Whereas in our case, one seeks measures that remain invariant (in the sense \eqref{eq: semigroup invariance}) w.r.t. to a given analogue of the transport plan (known), that is, the $C_{0}$-semigroup $(T_{t})_{t\geq 0}$ on $L^{1}(\mathds{R}^{m},\mu)$ which has $\mathcal{L}_{\upalpha}^{*}$ as a generator. And the latter invariance needs to hold for every $t\geq 0$. Hence, one needs a stronger distance than Wasserstein and TV seems to be well suited. 
\end{remark}



\begin{remark}
A heuristic interpretation of the ergodic MFG system \eqref{eq: mfg - main} is the following: an agent aims at maximizing the payoff 
\begin{equation*}
    \limsup\limits_{T\to +\infty} \; \mathds{E}\left[ \frac{1}{T}\int_{0}^{T} f(X_{t},\alpha_{t},m(t))\,\text{d}t\right]
\end{equation*}
while controlling the trajectory \eqref{eq: SDE intro} that is
\begin{equation*}
    \text{d}X_{t} = b(X_{t},\alpha_{t})\text{d}t + \sqrt{2}\varrho(X_{t},\alpha_{t})\text{d}B_{t}
\end{equation*}
and where $m(\cdot)$ denotes the distribution of all the other agents who behave analogously. An equilibrium is reached when the distribution $m(\cdot)$ of the agents solves the FPK equation in \eqref{eq: mfg - main} for which the ergodic constant is the optimal payoff.
\end{remark}

A tentative game-theoretical interpretation of assumption (C2) in the line of the above remark is the object of the following. 
\begin{remark}\label{rmk: interpretation}
Using 
\begin{equation*}
    f(x,\alpha,\mu+h) = f(x,\alpha,\mu) + D_{\mu}f(x,\alpha,\mu)[h] + o(\|h\|),\quad \forall\,h\in\mathcal{M}_{d}(\mathds{R}^{m}),
\end{equation*}
assumption (C2) that we recall here 
\begin{equation}
    \tag{C2}
        \langle\, D_{\mu}f(\cdot\,,\alpha,\mu)[h]\,,\,\mu\, \rangle \leq 0, \quad \forall\,h,\mu\in\mathcal{M}^{+}_{d}(\mathds{R}^{m}),
\end{equation}
means
\begin{equation*}
    \lim\limits_{t\downarrow 0} \frac{1}{t}\left(\int f(x,\alpha,\mu+th)\text{d}\mu - \int f(x,\alpha,\mu)\text{d}\mu\right)
    \leq 0,
    \quad \forall\, \mu,h\in \mathcal{M}_{d}^{+}(\mathds{R}^{m}).  
\end{equation*}
The measure $h$ being in $\mathcal{M}_{d}^{+}(\mathds{R}^{m})$, the latter would mean that any positive variation in the distribution of the agents decreases the payoff $f$ in expectation.
\end{remark}

In the next remark we compare (C2) with Lasry-Lions monotonicity assumption \eqref{monotonicity LL 2} that we recall is
\begin{equation}
    \tag{M'}
    \langle\,D_{\mu}f(\cdot\,,\alpha,\mu)[h]\,,\,h\,\rangle \leq 0,\quad \forall\,\mu,h\in\mathcal{M}_{d}(\mathds{R}^{m}).
\end{equation}
\begin{remark}\label{rmk: assumption LL} 
There is a twofold difference between \eqref{monotonicity LL 2} and our assumption (C2): \\
\textbullet \, firstly, the choice of measures in \eqref{monotonicity LL 2} is the whole space $\mathcal{M}_{d}(\mathds{R}^{m})$, whereas in our case we require the assumption to hold only in the positive cone\footnote{In (C2), we ask  $\langle\,D_{\mu}f(\cdot\,,\alpha,\mu)[h]\,,\mu\,\rangle \leq 0$ to hold $\forall\,h,\mu\in \mathcal{M}_{d}^{+}(\mathds{R}^{m})$ s.t. $\mu\ll dx$. But in this ongoing discussion, we forget deliberately about absolute continuity of $\mu$ w.r.t. Lebesgue measure in order to focus rather on the \textit{structure} of the assumption when compared to \eqref{monotonicity LL 2}.} $\mathcal{M}_{d}^{+}(\mathds{R}^{m})$; \\
\textbullet\, secondly, the averaging $\langle\,\cdot\,,\,h\,\rangle$ in \eqref{monotonicity LL 2} is taken with respect to the same measure $h$ as in the Fréchet derivative $D_{\mu}f(x,\alpha,\mu)[h]$, whereas in our case, the averaging $\langle\,\cdot\,,\,\mu\,\rangle$ is taking w.r.t. the measure $\mu$ where the derivative has been computed. 

This difference makes it difficult to compare the two conditions. However, in the case $f$ depends linearly on the measure $\mu$, e.g. $f(x,\alpha,\mu) = \int_{\mathds{R}^{m}}K(x-y,\alpha)\,\text{d}\mu(y)$, then the Fréchet derivative $D_{\mu}f(x,\alpha,\mu)[h] = f(x,\alpha,h)$ is independent of $\mu$. Hence, our condition (C2) requires $\langle f(\cdot\,,\alpha,h)\,,\, \mu \rangle \leq 0$ for all $h,\mu\in\mathcal{M}^{+}_{d}(\mathds{R}^{m})$, while condition \eqref{monotonicity LL 2} writes as $\langle f(\cdot\,,\alpha,h)\,,\, h \rangle \leq 0$ for all $h\in\mathcal{M}_{d}(\mathds{R}^{m})$. Therefore, in the case of a linear dependency on the measure, (C2) is stronger than \eqref{monotonicity LL 2} when restricted to the positive cone $\mathcal{M}^{+}_{d}(\mathds{R}^{m})$. If we assume in addition that the kernel $K(\cdot\,,\alpha)$ is odd, then by direct computations using the Jordan decomposition of $h$ (see \cite{kouhkouhPhD}), one can see that condition (C2) implies \eqref{monotonicity LL 2}. 

A nonlinear version of this example can be
\begin{equation*}
    f(x,\alpha,\mu) = F\left(x,\alpha,\int_{\mathds{R}^{m}}K(x-y,\alpha)\,\text{d}\mu(y)\right).
\end{equation*}
Denote by $D_{3}$ the derivative in the third variable of $F:\mathds{R}^{m}\times A \times \mathds{R}\to \mathds{R}$, then
\begin{equation*}
\begin{aligned}
    D_{\mu}f(x,\alpha,\mu)[h] & = \int_{\mathds{R}^{m}}D_{3}F\left(x,\alpha,\int_{\mathds{R}^{m}}K(x-z,\alpha)\,\text{d}\mu(z)\right)K(x-y,\alpha)\,\text{d}h(y)\\
    & = \int_{\mathds{R}^{m}} \phi(x,\alpha,\mu)K(x-y,\alpha)\,\text{d}h(y)
\end{aligned}
\end{equation*}
where $\phi$ is the term coming from $D_{3}F$ in the previous line. In this case, \eqref{monotonicity LL 2} writes
\begin{equation*}
    \iint_{\mathds{R}^{2m}}\left[ \phi(x,\alpha,\mu)K(x-y,\alpha)\right]\;\text{d}h(y)\,\text{d}h(x) \leq 0, \quad \forall\,h\in \mathcal{M}_{d}(\mathds{R}^{m}),
\end{equation*}
and assumption (C2) is now:
\begin{equation*}
    \iint_{\mathds{R}^{m}}\left[ \phi(x,\alpha,\mu)K(x-y,\alpha)\right]\;\text{d}h(y)\,\text{d}\mu(x) \leq 0, \quad \forall\,h,\mu\in \mathcal{M}_{d}^{+}(\mathds{R}^{m}).
\end{equation*}
In this example, it is sufficient to have the term between brackets non-positive almost everywhere to satisfy assumption (C2) since $h,\mu$ are non-negative measures. But this is not sufficient to guarantee assumption \eqref{monotonicity LL 2} since $h$ can be any (signed) measure. 
\end{remark}

\textbf{Examples. } \\
Functions $f$ satisfying (A5), (B1), (C1) and (C2) are for example 
$$
f(x,\alpha,\mu) = g(x,\alpha) + F\big(x,\alpha,k(\cdot\,,\alpha)\ast \mu(x)\big)
$$
where $g(\cdot\,,\cdot),k(\cdot\,,\cdot), F(\cdot\,,\cdot\,,\mu):\mathds{R}^{m}\times A \to \mathds{R}$ satisfy (A5) and \eqref{assumption phi 1}-\eqref{assumption phi 2}, $k(\cdot\,,\alpha)\ast \mu(x) = \int_{\mathds{R}^m} k(x-y,\alpha)\,\text{d}\mu(y)$ and $F$ is either one of the following cases
\begin{enumerate}[label = (\arabic*)]
    \item $F=0$ corresponds to the setting of \cite{kouhkouh1}.

    \hfill
    
    \item If $F(x,\alpha,\mu) = k(\cdot\,,\alpha)\ast \mu(x)$, then $D_{\mu}f(x,\alpha,\mu)[h] = k(\cdot\,,\alpha)\ast h(x)$. In this case, 
    it is sufficient to have $k(\cdot\,,\cdot\,)$ bounded for all $x,\alpha$ in order to satisfy (C1), and $k(\cdot\,,\cdot)\leq 0$ in order to satisfy (C2). Indeed we have $\langle D_{\mu}f(\cdot\,,\alpha,\mu)[h],\mu \rangle = \iint k(x-y,\alpha)\,\text{d}h(y)\,\text{d}\mu(x)$ where  $h,\mu\in \mathcal{M}_{d}^{+}(\mathds{R}^{m})$.
    
    \hfill
    
    \item If $F(x,\alpha,\mu) = \frac{1}{\gamma} (k(\cdot\,,\alpha)\ast \mu(x))^{\gamma}$, then \\
    $D_{\mu}f(x,\alpha,\mu) = \big(k(\cdot\,, \alpha)\ast \mu(x)\big)^{\gamma-1} k(\cdot\,,\alpha)\ast h(x)$. So if for some constant $M>0$ we have $|k(x,\alpha)| \leq M$ for all $x,\alpha$, then 
    \begin{equation*}
        \begin{aligned}
            |D_{\mu}f(x,\alpha,\mu)[h]| & \leq \big| k(x, \alpha)\ast \mu(x)\big|^{\gamma-1} \, |k(x,\alpha)\ast h(x)|\\
            & \leq M^{\gamma-1} \|\mu\|^{\gamma-1}_{TV}\, M \|h\|_{TV} = M^{\gamma}\|\mu\|^{\gamma-1}_{TV} \|h\|_{TV}\\
            & \leq \tilde{M}\, \|h\|_{TV},\quad \text{where } \, \tilde{M}:=M^{\gamma}\|\mu\|^{\gamma-1}_{TV},
        \end{aligned}
    \end{equation*}
    which ensures (C1). Moreover we have
    \begin{equation*}
        \langle D_{\mu}f(x,\alpha,\mu)[h] , \mu \rangle = \int \left(\int k(x-z,\alpha)\text{d}\mu(z)\right)^{\gamma-1} \left(\int k(x-y,\alpha)\text{d}h(y)\right) \, \text{d}\mu(x).
    \end{equation*}
    So if $\gamma$ is odd and $k(\cdot\,,\cdot)\leq 0$, then (C2) is satisfied. \\
    A more general sufficient condition would be to have 
    \begin{equation*}
        \left(\int k(x-z,\alpha)\text{d}\mu(z)\right)^{\gamma-1} k(x-y,\alpha) \leq 0,\, \forall \, x,y,\alpha, \text{ and for } \mu\in \mathcal{M}_{d}^{+}(\mathds{R}^{m}).
    \end{equation*}
    
    \hfill
    
    \item In general, we have $D_{\mu}f(x,\alpha,\mu)[h] = \int_{\mathds{R}^{m}} \phi(x,\alpha,\mu)k(x-y,\alpha)\,\text{d}h(y)$ where $\phi(x,\alpha,\mu) = D_{3}F(x,\alpha,k(\cdot\,,\alpha)\ast \mu(x))$ and $D_{3}$ is the derivative in the third variable of $F$. \\
    So it is sufficient to have $ \phi(x,\alpha,\mu)k(x-y,\alpha)$ bounded and non-positive to satisfy (C1) and (C2).
\end{enumerate}
We refer to \cite[\S 1.3]{bardi2021convergence} and references therein for various examples of the kernel $k$ with different interpretations.

\section*{Acknowledgments}
I wish to thank Martino Bardi, J. Frédéric Bonnans and Alessandro Goffi for fruitful discussions on the content of this manuscript. I am also grateful to Sergei Zuyev for helpful discussion on \S \ref{sec: opt space measure}.

\bibliographystyle{siam}
\bibliography{references}
\end{document}